\title{Factorizations of Cyclic Groups and Bayonet Codes}
\author{Christophe Cordero\footnote{ccordero@unisa.it}}
\date{
{\small$^*$Dipartimento di Informatica ed Applicazioni, Universit\`a di Salerno,\\
via Giovanni Paolo II 132, Fisciano, 84084, ITALY.}\\[2ex]
Nov 20, 2022}
\newtheorem{theorem}{Theorem}[section]
\newtheorem{proposition}[theorem]{Proposition}
\newtheorem{lemma}[theorem]{Lemma}
\newtheorem{definition}[theorem]{Definition}
\newtheorem{example}[theorem]{Example}
\newtheorem{remark}[theorem]{Remark}
\newtheorem{corollary}[theorem]{Corollary}
\newtheorem{conjecture}[theorem]{Conjecture}
\newcommand{\DEF}[1]{\textit{\textbf{#1}}}
\newcommand{\EnsCal}[1]{\mathcal{#1}}
\newcommand{\Alpha}{\EnsCal{A}}
\newcommand{\B}{\EnsCal{B}}
\newcommand{\Zn}[1]{\mathbb{Z}_{#1}}
\newcommand{\Z}{\Zn{n}}
\newcommand{\ENGENDRE}[1]{\left\langle #1 \right\rangle}
\newcommand{\PAR}[1]{\left(#1\right)}
\newcommand{\ENS}[1]{\left\{#1\right\}}
\newcommand{\CARD}[1]{\left|#1\right|}
\newcommand{\SOM}[1]{\underline{#1}}
\newcommand{\Modn}[1]{\equiv_{#1}}
\newcommand{\Mod}{\Modn{n}}
\newcommand{\MODn}[2]{\overline{#1}^{#2}}
\newcommand{\MOD}[1]{\MODn{#1}{n}}
\newcommand{\dans}{\in_{n}}
\newcommand{\ENTIERS}[1]{\left[ #1 \right]}
\newcommand{\entiers}{\ENTIERS{n}}
\renewcommand{\L}[3]{L_{#1}^{#2}\PAR{#3}}
\newcommand{\R}[3]{R_{#1}^{#2}\PAR{#3}}
\newcommand{\DUAL}[1]{\delta\PAR{#1}}
\begin{document}
\maketitle

	\begin{abstract}    	
    We study the (variable-length) codes of the 
    form~$X \cup \ENS{a^n}$ where~$X \subseteq a^*\omega a^*$ and~$\CARD{X} = n$. 
    We extend various notions and results from \textit{factorizations of cyclic groups} theory 
    to this type of codes.
    
    In particular, 
    when~$n$ is the product of at most three primes or has the form~$p^k q$
    (with~$p$ and~$q$ prime), we prove that they are composed of prefix and suffix codes. 
    We provide counterexamples for other~$n$.
    It implies that the long-standing \textit{triangle conjecture} 
    is true for this type of~$n$.
    We also prove a conjecture about the size of a potential counterexample to the conjecture.
	\end{abstract}

	\section*{Introduction}

	Schützenberger founded and developed the theory of (variable-length) \textit{codes} in the~1960s 
	in order to study encoding problems raised by Shannon's information theory. 
	Since then, the theory has undergone its own development and links with monoids, automata, 
	combinatorics on words and factorizations of cyclic groups have emerged. 
	We refer the reader to the book~\cite{berstel2010codes} for an introduction to the theory of codes.
	
	Let~$\Alpha$ be an alphabet containing letters~$a$ and~$b$.
	A \DEF{code} is a subset~$X \subseteq \Alpha^*$ such that for 
	all~$t, t' \geq 0$ and~$x_1, \dots, x_{t}, y_1, \dots, y_{t'} \in X$ the condition
	\begin{equation*}
    	x_1 \, x_2 \, \cdots \, x_{t} \, = \, y_1 \, y_2 \, \cdots \, y_{t'}
	\end{equation*}
	implies~$t = t'$ and~$x_i = y_i$, for~$i = 1, \dots, t$.
	For example, the set~$\ENS{aabb, abaaa, b, ba}$ is not a code because
	\begin{equation*}
		(b) (abaaa) (b) (b) = (ba) (ba) (aabb).
	\end{equation*}
	A straight forward way to create a code is to build a \textit{prefix set} 
	(respectively \textit{suffix set}),
	which is a set of words such that none of its words is a prefix
	(resp. suffix) of another one.
	For example, the set
	\begin{equation} \label{intro:ex_code_prefixe}
		\ENS{aaa, ab, aab, bba, ba}
	\end{equation}
	is prefix but not suffix.
	According to Proposition~2.1.9 from~\cite{berstel2010codes}, a prefix (resp. suffix) set 
	different than~$\ENS{\varepsilon}$ is a code,
	where~$\varepsilon$ is the empty word. 
	Such codes are called \DEF{prefix codes} (resp. \DEF{suffix codes}).
	So the set~\eqref{intro:ex_code_prefixe} is a prefix code.
	
	Since any code is included in a \DEF{maximal} code 
	(code that is not included in another code), 
	most of the theory of codes 
	is dedicated to the study of finite maximal codes.
	
	One of the main conjecture about the characterization of finite maximal codes is
	the \DEF{factorization conjecture} from 
	 Schützenberger~\cite{schutzenberger1965codes}.
	This conjecture states that for any finite maximal code~$M$, 
	there exists finite sets~$P, S \subseteq \Alpha^*$ such that
	\begin{equation} \label{conjecture_factorisation}
		\SOM{M} - 1 = \SOM{P} \PAR{\SOM{\Alpha} - 1} \SOM{S},
	\end{equation}
	where given a set~$X \subseteq \Alpha^*$, we denote its formal sum
	\begin{equation*}
		\sum\limits_{x \in X} x
	\end{equation*}
	by~$\SOM{X}$.
	The best known result about the conjecture is from 
	Reutenaeur~\cite{reutenauer1985noncommutative}. He proved 
	that for any finite maximal code~$M$, there 
	exists polynomials~$\SOM{P}, \SOM{S} \in \mathbb{Z}\langle\langle \Alpha \rangle\rangle$
	(the set of formal power series 
	of~$\Alpha^*$ over~$\mathbb{Z}$)
	such that~\eqref{conjecture_factorisation}.
	
	During some unsuccessful attempts to prove the conjecture, 
	Perrin and Schützenberger proposed an intermediate conjecture called 
	the \textit{triangle conjecture}~\cite{perrin1977probleme}. 
	It is stated as follows: for any \DEF{bayonet code}~$X$, i.e. for any 
	code~$X \subseteq a^* ba^* $, we have 
	\begin{equation} \label{intro:triangle}
		\CARD{ \ENS{x \text{ such that } x \in X \text{ and } |x| \leq k} } 
		\leq k, \text{ for all } k \geq 0.
	\end{equation}
	However, Shor found a counterexample~\cite{shor1985counterexample}.

	Since then, variants of the triangle conjecture have been proposed. In particular 
	the one that we nowadays call, by an abuse of language, the \DEF{triangle conjecture}
	 which suggests that any bayonet code~$X$ either satisfies the inequalities~\eqref{intro:triangle}
	  or it is not included in a finite maximal code.
	This conjecture is implied by the factorization conjecture.
	
    A stronger version of the triangle conjecture proposed by Zhang and Shum~\cite{zhang2017finite} 
	states that
	for all finite maximal codes~$M$,~$\omega \in \Alpha^*$,
	and~$k \geq 0$, we have
	\begin{equation*} 
		\CARD{ 
		\ENS{a^i \omega a^j \text{ such that } 
		a^i \omega a^j \in M^* \text{ and } i+j < k} } 
		\leq k. 
	\end{equation*}

	In this paper, our main subject is the (subsets of) codes
     concerned by the triangle conjectures, which are
     the codes of the form 
     \begin{equation*}
     	X \cup \ENS{a^n},
	\end{equation*}      
	where~$X \subseteq a^{\ENS{0,1,\dots, n-1}} ba^{\ENS{0,1,\dots, n-1}}$ and~$\CARD{X} = n$, 
	for a given~$n \geq 1$. 
	We call	them~$n$-\DEF{complete bayonet codes} (cbc). 
	"Complete" refers to the fact that such a code cannot 
	contain more elements, according to Proposition~2.1 of~\cite{de1985some}.
	We extend various notions and results from \textit{factorizations of cyclic groups} theory 
    to cbc.
    The framework we develop about cbc generalizes and simplifies the work 
	recently made about the triangle conjectures such 
	as~\cite{zhang2017finite,ResearchGateZhangShum,Felice2022}
	and allows us to improve the best known results.
    
    In particular, 
    when~$n$ is the product of at most three primes or has the form~$p^k q$
    with~$p$ and~$q$ prime (we call those numbers \DEF{cbc Haj\'{o}s numbers}), we prove 
    that any code~$X \cup \ENS{a^n}$, where~$X \subseteq a^* \omega a^*$,~$\omega \in \Alpha^*$, 
    and~$\CARD{X}=n$, 
    is a composition of prefix and suffix codes.
    We provide counterexamples in other cases.
	This implies that the Zhang and Shum conjecture and therefore
	the triangle conjecture is true for cbc Haj\'{o}s numbers.
	Moreover, our Theorem~\ref{th:d_conjecture} proves a conjecture about the size of a potential
	counterexample to the triangle conjectures
	and our Theorem~\ref{Theorem_carac_periodic} gives a structural property of 
	codes satisfying the factorization conjecture.  
		
	Our paper is organized as follows.
	In the first section, we mostly introduce and recall some concepts 
	that relate cbc to finite maximal codes.	
	In the second section, we study some operations on cbc
	that, among others, lead to a criterion that a code must satisfies in order to be
	 included in a finite maximal code.
    In the third section, we show that each cbc can be associated 
	to a notion that we call \textit{border}. 
	This notion exhibits a link between \textit{factorizations of cyclic groups} 
	and cbc. We deduce from it a characteristic about finite maximal codes.	 
	Then we show various operations to build \textit{borders} from others. 
	In the fourth section, similarly to factorization theory, we introduce a \textit{periodic} 
	and a \textit{Haj\'{o}s} notion 
	for cbc. Then we show that \textit{Haj\'{o}s cbc} are cbc bordered by a 
	\textit{Krasner factorization} which, in particular, provides a structural property of codes 
	that
	satisfy the factorization conjecture. 	
	In the fifth section, we prove that any cbc of size~$n$,
	where~$n$ is a cbc Haj\'{o}s numbers, is of Haj\'{o}s.
	We provide counterexamples in other cases.
	In the sixth section, we show that Haj\'{o}s cbc are composed of prefix and suffix codes 
	and thus
	are included in some finite maximal codes. We also deduce from it that the triangle 
	conjectures are true for cbc Haj\'{o}s numbers.	
	In section seven, thanks to the framework on borders, 
	we prove a conjecture about the size of a potential
	counterexample to the triangle conjecture.
	Finally, we conclude by exposing our main perspectives.

	\section{Complete Bayonet Code}

	It is well known\footnote{see, for example, Proposition~2.5.15 
   	of~\cite{berstel2010codes}.} 
   	that for any finite maximal code~$M$, there exists a unique integer~$n$ 
   	such that~${a^n \in M}$, it is called the \DEF{order} of the letter~$a$.
   	Most of the known characterizations of finite maximal codes are based 
   	on the order of one of its letter.
   	Such as Restivo, Salemi, and Sportelli who have shown~\cite{restivo1989completing} 
   	the following link between \textit{factorizations} (\textit{of cyclic groups}) and 
   	finite maximal codes.
   	
	\begin{theorem} \label{Preliminaire:th:restivo}
	If~$M$ is a finite maximal code such that~$b, a^n \in M$ then the 
	ordered pair
    \begin{equation*}
       \PAR{ \ENS{ k, \, a^k b^+ \in M },
        	  \ENS{ k, \, b^+ a^k \in M} }
    \end{equation*} 
	is a factorization of size~$n$. 
	\end{theorem}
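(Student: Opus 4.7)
The plan is to combine the completeness of a finite maximal code $M$ (every word in $\Alpha^*$ is a factor of some element of $M^*$) with the uniqueness of $M$-decomposition, applied to words in $a^* b^+ a^*$. Write $L = \ENS{k \,:\, a^k b^+ \in M}$ and $R = \ENS{k \,:\, b^+ a^k \in M}$; the aim is to show that each residue modulo $n$ is represented exactly once as $l + r \pmod n$ with $(l,r) \in L \times R$.

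First I collect elementary constraints. Since $a^n \in M$ and $M$ is a code, $a^n$ must be the unique pure power of $a$ in $M$: any other $a^m \in M$ with $m \neq n$ would give the forbidden ambiguity $(a^m)^n = (a^n)^m$. The same argument applied to $b \in M$ shows $b^m \notin M$ for all $m \geq 2$. Consequently, in any $M$-decomposition of a word of $a^*$, every factor equals $a^n$. I next analyze $M$-decompositions of words in $a^* b^+ a^*$: such a word has a single maximal run of $b$'s, so exactly one factor of its decomposition carries these $b$'s and has the shape $a^{l'} b^m a^{r'} \in M$, while every other factor equals $a^n$. Specializing to $a^i b$ forces the $b$-bearing factor to lie in $L \cdot b$, and dually for $b a^j$; combined with completeness of $M$, this yields the existence half: every residue $k \in \Z$ is realized as $l + r \pmod n$ for some $(l, r) \in L \times R$.

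The main obstacle is establishing uniqueness. Two distinct representations $l_1 + r_1 \equiv l_2 + r_2 \pmod n$ should produce two distinct $M^*$-decompositions of a common word in $a^* b^+ a^* b^+ a^*$, contradicting the code property of $M$. The cleanest implementation compares the formal series of $M^* \cap a^* b^+ a^*$ in two ways --- once via the left boundary, writing it as $\SOM{a^*}\,(\SOM{L} \cdot b^+)\,\SOM{a^*}$, and once via the right, as $\SOM{a^*}\,(b^+ \cdot \SOM{R})\,\SOM{a^*}$ --- and reads off the factorization identity after reduction modulo $n$. The delicate point, on which the whole argument really hinges, is that $M$ may contain interior words $a^{l'} b^m a^{r'} \in M$ with $l', r' > 0$ which a priori do not contribute to $L$ or $R$; here $b \in M$ is essential, since appending a $b$ on the appropriate side splits such an interior factor into two boundary factors via the unique decomposition property, absorbing its contribution into the two boundary series and making the counts match exactly.
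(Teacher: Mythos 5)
The paper does not prove this statement itself (it imports it from Restivo--Salemi--Sportelli), so your attempt has to be judged on its own; and as written it has two genuine gaps, one of which rests on a false claim. The false claim is that a word $a^i b^m a^j$ with a single maximal run of $b$'s has ``exactly one factor of its decomposition carrying these $b$'s.'' That is wrong: the run can be split across several consecutive factors, as in $(a^{l}b^{m_1})(b)\cdots(b)(b^{m_2}a^{r})$, and this splitting is not a nuisance but the entire mechanism of the theorem --- it is precisely the pair of ``straddling'' factors $a^{l}b^{+}\in M$ and $b^{+}a^{r}\in M$ that produces the sum $l+r$. Because you suppress this phenomenon, your existence argument never materializes: ``specializing to $a^ib$ \ldots combined with completeness'' does not say which word you complete nor why the completion is forced to exhibit an element of $L$ \emph{and} an element of $R$ adding to $k$. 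The standard way to force this is to complete $w=b^{A}a^{K}b^{A}$ with $A$ and $K$ larger than $\max_{m\in M}|m|$: in any decomposition of $uwv\in M^{*}$, the factor covering the last $b$ of the first block must be some $b^{m}a^{r}\in M$ (it is too short to reach back into $u$ or forward into the second block), the factor covering the first $b$ of the second block must be some $a^{l}b^{m'}\in M$, and everything in between is forced to be $a^{n}$; hence $K\equiv l+r \pmod n$ with $(l,r)\in L\times R$, and every residue is so realized since every residue class contains large $K$.

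Your uniqueness argument is also not sound as stated. The series identity $\SOM{M^{*}\cap a^{*}b^{+}a^{*}}=\SOM{a^{*}}\PAR{\SOM{L}\,b^{+}}\SOM{a^{*}}$ is false (that right-hand side imposes no condition on the exponent to the right of the $b$-run), and the proposed repair --- ``appending a $b$ splits an interior factor $a^{l'}b^{m}a^{r'}$ into two boundary factors'' --- does nothing: $a^{l'}b^{m}a^{r'}b$ already lies in $M^{*}$ with the unique decomposition $\PAR{a^{l'}b^{m}a^{r'}}\PAR{b}$, so no splitting occurs. The correct move is to avoid interior factors altogether by again working inside $b^{+}a^{*}b^{+}$: if $l_{1}+r_{1}\equiv l_{2}+r_{2}\pmod n$ with $(l_1,r_1)\neq(l_2,r_2)$, witnessed by $a^{l_i}b^{m_i},\,b^{m_i'}a^{r_i}\in M$, pick $K\equiv l_1+r_1\pmod n$ with $K\geq l_1+r_1,\,l_2+r_2$ and $A\geq m_1',m_2'$, $B\geq m_1,m_2$; then
\begin{equation*}
\PAR{b}^{A-m_1'}\PAR{b^{m_1'}a^{r_1}}\PAR{a^{n}}^{\frac{K-r_1-l_1}{n}}\PAR{a^{l_1}b^{m_1}}\PAR{b}^{B-m_1}
\quad\text{and}\quad
\PAR{b}^{A-m_2'}\PAR{b^{m_2'}a^{r_2}}\PAR{a^{n}}^{\frac{K-r_2-l_2}{n}}\PAR{a^{l_2}b^{m_2}}\PAR{b}^{B-m_2}
\end{equation*}
are two distinct $M$-decompositions of $b^{A}a^{K}b^{B}$ (the factors covering the two block boundaries pin down $r_i$ and $l_i$), contradicting the code property. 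Note that this half uses only that $M$ is a code and that $b\in M$; maximality is needed only for existence. Your global constraints ($a^{n}$ is the only power of $a$ in $M$, $b^{m}\notin M$ for $m\geq 2$) are correct and are indeed the right starting point, but the proof as proposed does not go through without the constructions above.
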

	
	We recall some basics notions about \textit{factorizations}.
	However, we refer the reader to the books~\cite{szabo2004topics, szabo2009factoring} 
	in order to find proofs of those recalls and	
	for an introduction to the more general theory of factorizations of abelian groups.	
	Given~$P,Q \subseteq \Zn{}$,
	an ordered pair~$\PAR{P,Q}$ is a \DEF{factorization} of size~$n$ 
	if and only if for all~$k \in \entiers$ ($\entiers$ denotes the set~$\ENS{0, 1, \dots, n-1}$),
	there exists a unique~$(p,q) \in P \times Q$ such that~$k = p +q$ in~$\Z$ 
	(the cyclic group of size~$n$).
	In particular,~$n$ must be equal to~$\CARD{P} \times \CARD{Q}$.
	For example, the ordered pair
	\begin{equation} \label{exemple:factorisation}
		\PAR{ \ENS{ 4, 5, 6, 7 } , \ENS{ 1, 5 }}
	\end{equation}
	is a factorization of size~$8$.
	A factorization~$(P,Q)$ is called \DEF{periodic} with period~$m \neq 0$ (in~$\Z$), 
	if~$P$ or~$Q$ is~$m$-periodic 
	in~$\Z$, i.e. 
	if
	\begin{equation*}
		m + P = P \text{ or } 
		m + Q = Q \text{ in } \Z.
	\end{equation*}
	 For example, the factorization~\eqref{exemple:factorisation} is~$4$-periodic 
	because~$4 + \ENS{1, 5} = \ENS{1, 5}$ in~$\mathbb{Z}_8$.

	A factorization~$\PAR{P,Q}$ is said to be \DEF{normalized} if~$0 \in P$
	and~$0 \in Q$. If~$\PAR{P,Q}$ is a ($m$-periodic) factorization 
	then for any~$p \in P$ and~$q \in Q$,
	\begin{equation*}
		\PAR{P-p,Q-q}
	\end{equation*}		
	is a ($m$-periodic) 
	normalized factorization.
	
	We recall that given a factorization~$\PAR{P,Q}$ of size~$n$, if~$P$ is periodic 
	then the set made of its periods and~$0$, i.e.
	\begin{equation*}
		\ENS{m, m+P =P \text{ in } \Z},
	\end{equation*}		
	is a subgroup of~$\Z$. 
	Moreover, if~$P$ is~$m$-periodic then~$\CARD{Q}$ divides~$m$. 	
	A set~$U$ is~$m$-periodic in~$\Z$ if and only if 
	\begin{equation*}
		U = \MODn{U}{m} + m\ENTIERS{\frac{n}{m}} \hspace{10pt}\text{ in } \Z,
	\end{equation*}
	where~$\MODn{U}{m}$ denotes the set~$\ENS{\MODn{u}{m}, u \in U}$ and where~$\MODn{u}{m}$ 
	denotes the remainder of the Euclidean division 
	of~$u$ by~$m$.
	
	A factorization~$(P,Q)$ of size~$n$ is 	
	said to be of \DEF{Haj\'{o}s} if and only if~$n = 1$ or if~$P$ (respectively~$Q$) is~$m$-periodic 
	and if
	\begin{equation*}
		\PAR{\MODn{P}{m},Q} \hspace{10 pt}
		\PAR{\text{resp. } \PAR{P,\MODn{Q}{m}}}
	\end{equation*}
	is again a Haj\'{o}s factorization of size~$m$. All factorizations of size~$n$
	where~$n$ is the product of at most four primes or a number of the form~$p^kq$, where~$k \geq 1$ 
    and~$p,q$ are primes, are Haj\'{o}s factorizations. Those numbers are called 
    \DEF{Haj\'{o}s numbers}. 
    Smallest non-Haj\'{o}s factorizations are therefore of sizes~$72$ and~$108$~\cite{oeis}. 
    See~\cite{de1953factorization} for such an example of non-Haj\'{o}s factorization of size~$72$.

	Thanks to Theorem~\ref{Preliminaire:th:restivo}, we can prove that the code
	\begin{equation} \label{code_restivo}
		\ENS{a^5, ab, b, baa},
	\end{equation}
	found by Restivo~\cite{restivo1977}, cannot be included in a finite maximal code 
	because there is no factorization of the form
    \begin{equation*}
        \PAR{ \ENS{ 0,1} \subseteq P,
               \ENS{ 0,2} \subseteq Q },
    \end{equation*}
    where~$\CARD{P} \times \CARD{Q} = 5$.
	
	In this paper, we expose a deeper link between the theory of codes and factorizations, 
	starting from the following notion introduced by Perrin and Schützenberger 
	 in~\cite{Perrin_Schutzenberger}.
	
	\begin{definition} Given a code~$M$ such that~$a^n \in M$ 
	and a word~$\omega \in \Alpha^*$,
	we set
	\begin{equation} \label{Preliminaire:def:eq:Cw}
		C_M(\omega) := 
		\ENS{ a^{\MOD{i}}ba^{\MOD{j}} \text{ such that }
		a^{i} \omega a^{j} \in M^* }.
	\end{equation}
	\end{definition} 
   	For example, given the finite maximal code
   	\begin{equation} \label{ex:code_max}
   		E := \ENS{b, ab, a^4, a^2ba, a^3b, a^2b^2},
   	\end{equation}
   	we have
   	\begin{equation*}
   		C_E(b) = \ENS{b, ab, a^2ba, a^3b}
   		\text{ and }
   		C_E(bb) = \ENS{b, ab, a^2b, a^3b}.
   	\end{equation*}
   	
   	As shown in Proposition~2.2 of~\cite{ResearchGateZhangShum}, sets 
   	of type~\eqref{Preliminaire:def:eq:Cw} form codes.
   	
	\begin{proposition} \label{prop:cbc} If~$M$ is a code 
	containing~$a^n$ then for any~$\omega \in \Alpha^*$, the set
		\begin{equation}\label{prop:eq:cbc}
			 \ENS{a^n} \cup C_M(\omega)
		\end{equation}
		is a code.
	\end{proposition}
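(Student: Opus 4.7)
The plan is to argue by contradiction. Suppose $B := \{a^n\} \cup C_M(\omega)$ admits two distinct factorizations $u_1 \cdots u_t = v_1 \cdots v_s =: w$; after cancelling common prefixes we may take $u_1 \neq v_1$. A short case analysis, using that elements of $C_M(\omega)$ have the form $a^k b a^\ell$ with $k, \ell \in [0, n)$ and that $a^n$ contains no $b$, forces $u_1 = a^i b a^j$ and $v_1 = a^i b a^{j'}$ for some $i, j, j' \in [0, n)$ with $j < j'$.

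A key preliminary observation about $M$ is that $M \cap a^* = \{a^n\}$: any other $a^r \in M$ with $0 < r \neq n$ would produce $(a^n)^r = (a^r)^n$ as two distinct $M$-factorizations of $a^{rn}$, contradicting that $M$ is a code. Hence $M^* \cap a^* = \{a^{kn} : k \geq 0\}$, and in particular $a^s \notin M^*$ for $0 < s < n$.

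To lift the ambiguity into $M^*$, choose an integer $K \geq 1$ large enough that $a^{k + Kn} \omega a^{\ell + Kn} \in M^*$ for every $a^k b a^\ell$ in the finite set $C_M(\omega)$; such $K$ exists because the defining property of $C_M(\omega)$ supplies, for each element, some lift $(k + \tilde{k}\,n,\, \ell + \tilde{\ell}\,n)$ with $a^{k + \tilde{k}\,n} \omega a^{\ell + \tilde{\ell}\,n} \in M^*$, and $M^*$ is closed under pre- and post-concatenation with $a^n$. Define $\hat{\phi} : B^* \to M^*$ by $\hat{\phi}(a^n) := a^n$ and $\hat{\phi}(a^k b a^\ell) := a^{k + Kn} \omega a^{\ell + Kn}$. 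A direct length-matching computation shows $\hat{\phi}(u) = \hat{\phi}(v) =: w'' \in M^*$: writing $w = a^{c_0} b a^{c_1} b \cdots b a^{c_r}$, the $a$-runs of $w''$ between consecutive $\omega$'s are $c_0 + Kn$, $c_1 + 2Kn, \ldots, c_{r-1} + 2Kn$, $c_r + Kn$, depending only on $w$ and not on how each $c_k$ is split into $j_k + \alpha_k n + i_{k+1}$ in either factorization.

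Since $M$ is a code, $w''$ has a unique $M$-factorization $\mu_1 \cdots \mu_L$; both $\hat{\phi}(u_1) = a^{i + Kn} \omega a^{j + Kn}$ and $\hat{\phi}(v_1) = a^{i + Kn} \omega a^{j' + Kn}$ lie in $M^*$ and are prefixes of $w''$, so they correspond to prefixes $\mu_1 \cdots \mu_{L_u}$ and $\mu_1 \cdots \mu_{L_v}$ of this factorization with $L_u < L_v$. The intermediate $M$-factors $\mu_{L_u + 1} \cdots \mu_{L_v}$ concatenate to the purely-$a$ segment $a^{j' - j}$, hence $a^{j' - j} \in M^*$ --- contradicting the preliminary observation, since $0 < j' - j < n$. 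The main obstacle is recognising that the naive morphism $a^k b a^\ell \mapsto a^k \omega a^\ell$ does not in general land in $M^*$ (the definition of $C_M(\omega)$ gives only that \emph{some} lift does); the uniform padding by $Kn$ both repairs this defect and, crucially, makes the two factorizations produce the \emph{same} word in $M^*$, which is what allows the $M$-factorization uniqueness argument to conclude.
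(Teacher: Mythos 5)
Your proof is correct and follows the same core strategy as the paper's: lift the ambiguous factorization over $\{a^n\}\cup C_M(\omega)$ into $M^*$ by padding each bayonet word with suitable powers of $a^n$, obtain a single word of $M^*$ admitting two decompositions, and contradict the fact that $M$ is a code. The one place you go beyond the paper is in justifying that the two induced $M$-factorizations really are distinct (via $M^*\cap a^+=(a^n)^+$ and the first point of divergence yielding $a^{j'-j}\in M^*$ with $0<j'-j<n$), a step the paper's proof asserts implicitly with "can be decomposed in two different ways"; this is a worthwhile tightening rather than a different route.
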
 
	
	Our statement is slightly different, we produce a straightforward proof.
	
	\begin{proof}
		Given a word~$\omega$ and a code~$M$ containing~$a^n$, 
		if the set~\eqref{prop:eq:cbc} 
		is not a code then there exists
		 \begin{equation*}
		 a^{i_1} \omega a^{j_1}	, \dots, a^{i_t} \omega a^{j_t},
		 a^{k_1} \omega a^{\ell_1}, \dots, a^{k_t} \omega a^{\ell_t} \in M^*	
    	\end{equation*}	
		such that~$\MOD{j_1} \neq \MOD{\ell_1}$ and 
		\begin{equation*}
		 a^{\MOD{i_1}} b a^{\MOD{j_1}} 	\dots a^{\MOD{i_t}} b a^{\MOD{j_t}}
		 \Mod
		 a^{\MOD{k_1}} b a^{\MOD{\ell_1}}	\dots a^{\MOD{k_t}} b a^{\MOD{\ell_t}},	
    	\end{equation*}	
    	where~$\Mod$ denotes the congruence over~$\mathbb{Z}\langle\langle \Alpha \rangle\rangle$ 
	 defined by the relation~$a^n = \varepsilon$.
    	Thus
		 \begin{equation*}
		 a^{i_1} \omega a^{j_1}	 \dots a^{i_t} \omega a^{j_t}
		 \Mod
		 a^{k_1} \omega a^{\ell_1} \dots a^{k_t} \omega a^{\ell_t}.	
    	\end{equation*}	
    	Which implies that~$M$ is not a code since words from the non-empty set
		\begin{equation*}
		 \PAR{a^n}^*\PAR{a^{i_1} \omega a^{j_1}}\PAR{a^n}^* 	
		 \dots \PAR{a^{i_t} \omega a^{j_t}}\PAR{a^n}^*
		 \cap
		 \PAR{a^n}^* \PAR{a^{k_1} \omega a^{\ell_1}} \PAR{a^n}^*	
		 \dots \PAR{a^{k_t} \omega a^{\ell_t}}  \PAR{a^n}^*
    	\end{equation*}		
    	can be decompose in two different ways.
    	This ends the proof by contradiction.
	\end{proof}
   	
   	Perrin and Schützenberger introduced 
   	the notion~\eqref{Preliminaire:def:eq:Cw} as a characterization 
   	of finite maximal codes.
	\begin{theorem} \label{Preliminaire:th:schutz}
	If~$M$ is a finite code such that~$a^n \in M$ then
	\begin{equation} \label{Preliminaire:th:schutz:equivalence1}
		M \text{ is maximal}
		 \hspace{8pt} \iff \hspace{8pt}		
		\forall \omega \in \Alpha^*, \hspace{10pt} 
		\CARD{C_M(\omega)} = n. 
	\end{equation}
	\end{theorem}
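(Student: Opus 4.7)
I would prove the two directions of the equivalence separately.

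\emph{Sufficiency}~($\Leftarrow$). Assume $\CARD{C_M(\omega)} = n$ for every~$\omega \in \Alpha^*$. Since codes cannot contain the empty word, $a^n \in M$ forces $n \geq 1$, so each $C_M(\omega)$ is non-empty. By the definition of~$C_M(\omega)$, this means that for every~$\omega$ there exist~$i,j \geq 0$ with $a^i \omega a^j \in M^*$; that is,~$\omega$ is a factor of some element of~$M^*$. Hence~$M$ is a complete finite code, and by the classical equivalence between completeness and maximality for finite codes (\cite{berstel2010codes}),~$M$ is maximal.

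\emph{Necessity}~($\Rightarrow$). Assume $M$ is maximal. The upper bound $\CARD{C_M(\omega)} \leq n$ is immediate from the bayonet viewpoint developed in the paper: by Proposition~\ref{prop:cbc}, $\ENS{a^n} \cup C_M(\omega)$ is a code contained in $\ENS{a^n} \cup a^\entiers b a^\entiers$, i.e., a bayonet code, so Proposition~2.1 of~\cite{de1985some} yields $\CARD{C_M(\omega)} \leq n$.

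For the matching lower bound, I would work with the deterministic automaton recognizing~$M^*$. The codeword~$a^n$, together with the uniqueness of~$a^n$ as the only~$a$-power in~$M$, produces a cycle of~$n$ distinct states $q_0, q_1, \ldots, q_{n-1}$ through the initial state~$q_0$, with indices taken modulo~$n$ so that reading~$a$ sends~$q_{\bar{i}}$ to~$q_{\bar{i}+1}$. A pair $(\bar{i}, \bar{j}) \in C_M(\omega)$ then corresponds to a path labeled~$\omega$ from~$q_{\bar{i}}$ to the cycle state~$q_{-\bar{j}}$, so the bound $\CARD{C_M(\omega)} \geq n$ reduces to the claim that, starting at each of the~$n$ cycle states, reading~$\omega$ terminates at some cycle state. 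The \textbf{main obstacle} is exactly this ``landing" claim: combining completeness of~$M$ (every word is a factor of some~$M^*$-word) with the unique-decoding property of~$M$, one shows that from the state reached after reading~$a^{\bar{i}}\omega$, some pure~$a$-word returns to~$q_0$, which by the uniqueness of~$a^n$ among~$a$-powers in~$M$ forces that state to lie on the cycle. Producing~$n$ pairs in $C_M(\omega)$ with pairwise distinct first coordinates~$\bar{i}$ then yields $\CARD{C_M(\omega)} \geq n$, closing the proof.
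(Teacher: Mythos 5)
Your sufficiency direction is sound and matches what the paper does: non-emptiness of every $C_M(\omega)$ gives completeness of $M$, and a finite (hence thin) complete code is maximal, which is exactly the paper's citation of Theorem~2.5.13 of~\cite{berstel2010codes}. The upper bound $\CARD{C_M(\omega)} \leq n$ via Proposition~\ref{prop:cbc} is also fine. Be aware, however, that the paper does not prove the necessity direction at all --- it cites Proposition~12.2.4 of~\cite{berstel2010codes} --- so for that direction you are attempting a from-scratch argument for a statement the paper treats as known, and that attempt is where the problem lies.

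The gap is the lower bound. The claim you isolate as the ``main obstacle'' --- that for every residue $\bar{i}$ there is some $j$ with $a^{i}\omega a^{j} \in M^*$ --- is essentially the entire content of Proposition~12.2.4, and the one sentence you offer does not establish it. Completeness only gives $u\,a^{i}\omega\,v \in M^*$ for \emph{some} words $u,v$; it does not let you take $u$ and $v$ to be powers of $a$, nor does it control the length of $u$ modulo $n$, which is what you need to hit every cycle state $q_{\bar{i}}$. Concretely: (1) in a trim deterministic automaton of $M^*$ the transition by $\omega$ from $q_{\bar{i}}$ need not even be defined, since completeness does not imply that $a^{i}\omega$ is a prefix of a word of $M^*$; (2) what membership $a^{i}\omega a^{j}\in M^*$ actually requires is that after further reading $a^{j}$ one reaches an accepting state, so your reduction to ``landing on the cycle'' is neither obviously necessary nor justified (a state $p$ with $p\cdot a^{j}=q_0$ need not lie on the cycle unless the $a$-transitions are injective on the relevant states, which you have not argued); and (3) the existence of a returning $a$-power from the landing state is precisely what must be proved --- the standard proofs go through the matrix representation of $M^*$ by an unambiguous automaton and an idempotent power of the image of $a$ (or an equivalent density/degree argument), not a direct appeal to unique decodability. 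As written, the necessity direction is a plan with its central step missing; either supply that argument or do as the paper does and invoke Proposition~12.2.4 of~\cite{berstel2010codes}.
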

	
	The left to right implication of~\eqref{Preliminaire:th:schutz:equivalence1} is demonstrated 
	as Proposition~12.2.4 in~\cite{berstel2010codes} 
	and the converse is true according to Theorem~2.5.13
	of~\cite{berstel2010codes}.
	The "finite" hypothesis is necessary 
	because the code~\eqref{code_restivo} of Restivo is contained in some infinite maximal codes 
	and none of which verify~\eqref{Preliminaire:th:schutz:equivalence1} 
	(in particular when~${\omega = b}$).

	We now introduce the class of codes studied in this paper and 
	which contains those of type~\eqref{prop:eq:cbc}.
	 
	\begin{definition}
		We call~$n$-\DEF{complete bayonet code} ($n$-\textbf{cbc}) a
		set~${X \subseteq a^{\entiers} b a^{\entiers}}$ such that
		\begin{equation*}
			\CARD{X} = n 
			\hspace{5pt} \text{ and } \hspace{5pt}
			\ENS{a^n} \cup X \text{ is a code}.
		\end{equation*}
	\end{definition}
	
	Thus for any finite maximal code~$M$ containing~$a^n$ and for any word~$\omega$, 
	the set~$C_M(\omega)$ is an~$n$-cbc. However, 
	we do not know whether or not to any~$n$-cbc~$X$ corresponds a finite maximal code~$M$ 
	and a word~$\omega$ such that~$X = C_M(\omega)$. 
	Lam has nevertheless shown~\cite{Lam} that any cbc of the form~$a^P b a^Q$, 
	where~$(P,Q)$ is a Haj\'{o}s factorization, is included in a finite maximal code.
	We improve this result in Theorem~\ref{completion:th1}. 

	One of our main motivation is to study the \textit{strong triangle conjecture} 
	based on \textit{triangle property}.
	
	\begin{definition}
		We say that an~$n$-cbc~$X$ satisfies the \DEF{triangle property} if 
		\begin{equation*} 
		\CARD{ \ENS{x \text{ such that } x \in X \text{ and } |x| \leq k} } 
		\leq k,
		\end{equation*}
		for all~$k \in \entiers$. The \DEF{strong triangle conjecture} states that every 
		cbc satisfy the triangle property. 
	\end{definition}
	
	The strong triangle conjecture implies the Zhang and Shum conjecture and therefore
	the triangle conjecture.

	\section{Stability}
	
	In this section, we introduce and study some operations on cbc and their framework 
	related to finite maximal codes.
	Firstly, we introduce a composition operation for cbc.	
	
	\begin{definition} Let~$X$ and~$Y$ be~$n$-cbc, we set
	\begin{equation*}
        X \circ_r Y := 
        \left\{ a^{i} b a^{\ell} \text{ such that } a^{i} b a^{j} \in X, 
        \, a^{k} b a^{\ell} \in Y, 
        \text{and } \MOD{j + k} = r \right\},
   \end{equation*}	
   for any~$r \in \entiers$.
   \end{definition}

	\begin{example}		
	For any~$n$-cbc~$X$ and~$k \in \entiers$,  
    \begin{equation*}
    	X \circ_k \ENS{a^{\MOD{k-i}} b a^i, i \in \entiers} = X.
    \end{equation*} 
	\end{example}	

	The operations~$\PAR{ \circ_r }_{r \geq 0}$ are associative. 
	Indeed, for any~$n$-cbc~$X, Y, Z$ and~$r_1, r_2 \in \entiers$, 
	\begin{equation*}
		\PAR{ X \circ_{r_1} Y } \circ_{r_2} Z 
		\text{ and }
		X \circ_{r_1} \PAR{ Y \circ_{r_2} Z }
	\end{equation*}	 
	are equal to
	\begin{equation*}
        \ENS{ a^{i} b a^{j} \text{ such that } a^{i} b a^{j_1} \in X, \, 
        a^{i_2} b a^{j_2} \in Y, \, 
        a^{i_3} b a^{j} \in Z, \,  
        \MOD{j_1 + i_2} = r_1,
        \text{and } \MOD{j_2 + i_3} = r_2 }.
   \end{equation*}	
   However, the product of two~$n$-cbc does not necessarily results in an~$n$-cbc.
	For example, 
	\begin{equation*}
		\ENS{b, ba} \circ_0 \ENS{b, ab} = \ENS{b}.
	\end{equation*}		

	We introduce the notion of \textit{compatibility} as a framework 
	that enables composition of cbc.
	
	\begin{definition}
	A set~$\EnsCal{E}$ of~$n$-cbc is said to be \DEF{compatible} if for 
	all~$X_1, \dots, X_{k+1} \in \EnsCal{E}$ and~$r_1, \dots, r_k \in \entiers$,
    \begin{equation*}
        X_1 \circ_{r_1} X_2 \circ_{r_2} X_3 \dots \circ_{r_k} X_{k+1}
    \end{equation*}
    is an~$n$-cbc.	
	\end{definition}

	The compatibility of a set can be tested thanks to a graph algorithm.
	Given an integer~$n \geq 1$ and a set~$\EnsCal{E}$ of bayonet codes, 
	we denote by~$\mathcal{G}_n\left(\EnsCal{E}\right)$ 
	the directed graph made of the set of vertices~$\entiers$ and arrows from~$k_1$ to~$k_2$
	if and only if there exists~$X \in \EnsCal{E}$ 
	and two different words~$a^{i_1}ba^{j_1}, a^{i_2}ba^{j_2} \in X$ such that 
	\begin{equation*}
		k_1 = \MOD{i_1 - i_2} \text{ and } k_2 = \MOD{j_2 -j_1}.
	\end{equation*}

	\begin{proposition} \label{prop:algo_compatibilite}
	A set~$\EnsCal{E}$ of~$n$-cbc is compatible
	if and only if the graph~$\mathcal{G}_n\PAR{\EnsCal{E}}$
    does \textbf{not} contain a non-empty path from~$0$ to~$0$.
	\end{proposition}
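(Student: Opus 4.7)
The plan is to show the equivalence via a direct correspondence between \emph{non-empty paths from~$0$ to~$0$ in $\mathcal{G}_n\PAR{\EnsCal{E}}$} and \emph{pairs of distinct ``composition chains'' that share intermediate residues and output}, and then to identify such chain-collisions with the failure of compatibility. Call a \emph{chain of length~$k$} a tuple $(w_t)_{t=1}^k$ with $w_t = a^{i_t} b a^{j_t} \in X_t \in \EnsCal{E}$; its \emph{intermediate residues} are $r_t := \MOD{j_t + i_{t+1}}$ for $1 \leq t < k$, and its \emph{output} is $a^{\MOD{i_1}} b a^{\MOD{j_k}}$, an element of $Z_{\vec r} := X_1 \circ_{r_1} \cdots \circ_{r_{k-1}} X_k$.

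From a non-empty path $0 = v_0 \to v_1 \to \cdots \to v_k = 0$ I would extract at each edge $v_{t-1} \to v_t$ a cbc $X_t \in \EnsCal{E}$ and two distinct words $w_t^{(1)}, w_t^{(2)} \in X_t$ with $v_{t-1} = \MOD{i_t^{(1)} - i_t^{(2)}}$ and $v_t = \MOD{j_t^{(2)} - j_t^{(1)}}$. Setting $r_t := \MOD{j_t^{(1)} + i_{t+1}^{(1)}}$, the consecutive-edge identity $v_t = \MOD{i_{t+1}^{(1)} - i_{t+1}^{(2)}}$ forces by a direct calculation $r_t = \MOD{j_t^{(2)} + i_{t+1}^{(2)}}$ as well, so the all-$(1)$ and all-$(2)$ chains share the residues~$r_t$ and (because $v_0 = v_k = 0$) the output. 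The two chains are distinct since the path is non-empty, so they witness the same element of $Z_{\vec r}$ twice; using the global count $\sum_{\vec r} \CARD{Z_{\vec r}} \leq n^k$ (one summand per chain) over the $n^{k-1}$ residue tuples, any collision strictly lowers the sum below $n \cdot n^{k-1}$, so some tuple $\vec r'$ has $\CARD{Z_{\vec r'}} < n$. Hence $\EnsCal{E}$ is not compatible.

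The converse is more delicate: assume $\EnsCal{E}$ is not compatible, so some $Z_{\vec r}$ fails to be an $n$-cbc. I split by the mode of failure. If $\CARD{Z_{\vec r}} < n$ with the number of chains at $\vec r$ at least~$n$, pigeonhole yields a collision at $\vec r$, and the reverse of the above construction assembles a non-empty path from~$0$ to~$0$ (arrows at the indices where the two chains differ, connected by passes through~$0$ at matching indices). If $\CARD{Z_{\vec r}} > n$, I exploit the same counting identity to locate a companion tuple with too few chains, reducing to the previous case. If $\CARD{Z_{\vec r}} = n$ but $\ENS{a^n} \cup Z_{\vec r}$ fails the code property, I lift an ambiguous factorisation back into two distinct chains in $X_1 \times \cdots \times X_k$ sharing residues and output, using Proposition~\ref{prop:cbc}. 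The main obstacle I expect is the last subcase, namely tracing the code-property failure through an arbitrary composition; I plan to handle it by induction on~$k$, reducing to the binary case $X \circ_r Y$ and carefully matching the residue arithmetic of the two chains so that they project onto the sought-after non-empty cycle at~$0$.
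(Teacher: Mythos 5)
Your first direction (a non-empty $0$-to-$0$ path implies non-compatibility) is sound and in fact more careful than the paper's own treatment: the paper merely asserts that non-compatibility is equivalent to the existence of two chains with congruent products and $j_1\neq\ell_1$, whereas you actually close the loop by observing that a collision forces $\sum_{\vec r}\CARD{Z_{\vec r}}<n\cdot n^{k-1}$ over the $n^{k-1}$ residue tuples, so some composition has fewer than $n$ elements. That counting step is correct and worth keeping.

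The converse, however, has a genuine gap. Your cases (i) and (ii) presuppose that a cardinality failure of some $Z_{\vec r}$ can be traced back to a collision \emph{for the same sequence} $X_1,\dots,X_k$, but the number of chains landing on a given residue tuple need not equal $n$, and when it is too small the composition is deficient with no collision available to reverse-engineer. Concretely, take $n=2$ and $\EnsCal{E}=\ENS{\ENS{b,ab},\ENS{b,ba}}$: every chain in $\ENS{b,ab}\times\ENS{b,ba}$ has intermediate residue $0$, so $\ENS{b,ab}\circ_1\ENS{b,ba}=\emptyset$ while $\ENS{b,ab}\circ_0\ENS{b,ba}=\ENS{b,ba,ab,aba}$ has four elements; yet the four chains have four distinct outputs, so there is no collision anywhere for this ordering. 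The required path $0\to1\to0$ does exist, but the collision producing it lives in the \emph{reversed} composition: in $\ENS{b,ba}\circ_1\ENS{b,ab}$ both $(b,ab)$ and $(ba,b)$ give $bab$. Consequently your case (ii) cannot ``reduce to the previous case'': the companion tuple with too few chains has \emph{fewer} than $n$ chains, which is precisely the situation your case (i) excludes, and no pigeonhole applies there; the subcase ``$\CARD{Z_{\vec r}}<n$ with fewer than $n$ chains at $\vec r$'' is simply not covered. Closing the gap requires showing that whenever the chain count is non-constant for one sequence over $\EnsCal{E}$, a collision (hence a path) arises for some \emph{other} sequence over $\EnsCal{E}$ --- a point the paper's proof also leaves implicit by quantifying over all $t$ and all $X_1,\dots,X_t$ in its asserted equivalence. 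Your third subcase (lifting a failure of the code property of $\ENS{a^n}\cup Z_{\vec r}$ to two longer chains with equal residues, as in the proof of Proposition~\ref{prop:cbc}) is the right idea and does go through, but as written it is only a plan.
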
	
	
	\begin{proof}
		A set~$\EnsCal{E}$ of~$n$-cbc is not compatible if and only if there 
		exists~$t > 1$,~$X_1, \dots, X_{t} \in \EnsCal{E}$, and
		\begin{equation} \label{preuve:eq:chemin_graphe_1}
			a^{i_1}ba^{j_1}, a^{k_1}ba^{\ell_1} \in X_1, \dots, 
			a^{i_t}ba^{j_t}, a^{k_t}ba^{\ell_t} \in X_{t}
		\end{equation}
		such that~$j_1 \neq \ell_1$ and
		\begin{equation} \label{preuve:eq:chemin_graphe_2}
			a^{i_1}ba^{j_1} a^{i_2}ba^{j_2} \dots a^{i_t}ba^{j_t} 
			\Mod 
			a^{k_1}ba^{\ell_1} a^{k_2}ba^{\ell_2} \dots a^{k_t}ba^{\ell_t}.
		\end{equation}
		Thus if~$\EnsCal{E}$ is not compatible then~$\mathcal{G}_n\PAR{\EnsCal{E}}$ 
		contains the path
		\begin{equation*}
		\begin{tikzpicture}[baseline=-0.5ex]
			\node[draw,rounded corners] (A) at (0, 0) {$0 = \MOD{i_1 - k_1}$};
			\node[draw,rounded corners] (B) at (3.4, 0) {$\MOD{\ell_1-j_1} = \MOD{i_2-k_2}$};
			\node (C) at (5.8, 0) {$\dots$};
			\node[draw,rounded corners] (D) at (8.6, 0) {$\MOD{\ell_{t-1}-j_{t-1}} = \MOD{i_t-k_t}$};
			\node[draw,rounded corners] (E) at (12.2, 0) {$\MOD{\ell_t-j_t} = 0$};

			\draw[->] (A) edge (B) ;
			\draw (B) edge (C) ;
			\draw[->] (C) edge (D) (D) edge (E);
		\end{tikzpicture}\,.
		\end{equation*}
		
		Conversely, for any path from~$0$ to~$0$ of length~$t > 1$ in the 
		graph~$\mathcal{G}_n\left(\EnsCal{E}\right)$,
		there exists~\eqref{preuve:eq:chemin_graphe_1} such that the
		 relation~\eqref{preuve:eq:chemin_graphe_2} is true.
	\end{proof}
	
	We can see~$\mathcal{G}_n\left(\EnsCal{E}\right)$ as the superposition 
	of graphs~$\mathcal{G}_n\left(\ENS{X}\right)$, where~$X \in \EnsCal{E}$.
	So in the particular case where~$\EnsCal{E}$ is a singleton, 
	the graph~$\mathcal{G}_n\left(\EnsCal{E}\right)$
	is equivalent to the graph defined in Proposition~1 
	of~\cite{perrin1981conjecture}\footnote{In Proposition~1 of~\cite{perrin1981conjecture},
	Perrin and Schützenberger defined a graph on sets of integers. 
	They did not explain the link with theory of codes.  
	However, one can understand it as an algorithm to test whether or not a set of 
	the form~$\ENS{a^n} \cup X$ (where~$X \subseteq a^*ba^*$) is a code.
	Their graph is not equal to~$\mathcal{G}_n\PAR{\ENS{X}}$ in general but it also contains a 
	non-empty path from~$0$ to~$0$ if and only if the considered set is not a code.} 
	and equal to the graph~$\mathcal{G}_{mod}$ of~\cite{cordero2019note}.

	We introduce the \textit{stable} notion as compatible sets
	closed under composition.
	
	\begin{definition}
	A set~$\EnsCal{S}$ of~$n$-cbc is \DEF{stable} if for all~$X, Y \in S$ 
	and~$r \in \entiers$,
    \begin{equation*}
        X \circ_{r} Y \in \EnsCal{S}.
    \end{equation*}
	\end{definition}     
	
	A stable set is therefore compatible.
	 Given a set~$\EnsCal{E}$ of compatible cbc, we denote by~$\EnsCal{E}^\circ$ 
	the smallest stable set (for inclusion) containing~$\EnsCal{E}$.
	We say that~$\EnsCal{E}^\circ$ is the stable of~$\EnsCal{E}$. 
	For example, we obtain	
    \begin{equation} \label{ex:code_max_stable}
    	\ENS{ C_E(b), C_E(bb)}^\circ = \ENS{ C_E(b), C_E(bb), 
    	\ENS{ ba, aba, a^2b, a^3ba },
		\ENS{ ba, aba, a^2ba, a^3ba }},
    \end{equation} 
    where~$E$ is the code~\eqref{ex:code_max}.
    
    We can associate stable sets to any finite maximal code.
	
	\begin{proposition}	 \label{Prop_code}
		Let~$M$ be a finite maximal code, the sets 
		\begin{equation*}
   			\EnsCal{C}_M := \ENS{ C_M(\omega), \, \omega \in \Alpha^*}
   			\text{ and }
   			\EnsCal{C}_M' := \ENS{ C_M(\omega), \, 
   			\omega \in \B\PAR{a^*\B}^*},
   		\end{equation*}
   		where~$\B$ is the alphabet~$\Alpha\setminus\ENS{a}$, are stable.
	\end{proposition}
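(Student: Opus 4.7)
The plan is to establish the central identity
\[
C_M\PAR{\omega_1} \circ_r C_M\PAR{\omega_2} = C_M\PAR{\omega_1 a^r \omega_2}
\]
for all $\omega_1, \omega_2 \in \Alpha^*$ and $r \in \entiers$. From this identity both stability statements follow immediately: for $\EnsCal{C}_M$ tautologically, and for $\EnsCal{C}_M'$ after observing that the set $\B\PAR{a^*\B}^*$ is closed under the map $(\omega_1, \omega_2) \mapsto \omega_1 a^r \omega_2$ (since the concatenation still begins and ends with a letter of $\B = \Alpha \setminus \ENS{a}$ and has the required alternating $\B$--versus--$a$ structure).

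For the inclusion $\subseteq$, take witnesses $a^i \omega_1 a^j \in M^*$ and $a^k \omega_2 a^\ell \in M^*$ with $i, j, k, \ell \in \entiers$ and $\MOD{j+k} = r$. Their concatenation is $a^i \omega_1 a^{j+k} \omega_2 a^\ell \in M^*$, with a factorization boundary at position $i + |\omega_1| + j$. Since $j, k < n$ the exponent $j + k$ is either $r$ or $r + n$; in the first case we are done directly, and in the second the central $a^{r+n}$-block has a factorization boundary inside it which allows one to locate an $a^n$-factor (the unique pure $a$-power belonging to $M$, since otherwise two distinct pure $a$-powers in $M$ would give two factorizations of a common pure $a$-power, contradicting that $M$ is a code) and delete it to obtain $a^i \omega_1 a^r \omega_2 a^\ell \in M^*$.

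For the inclusion $\supseteq$, fix $(i, \ell)$ with $a^i \omega_1 a^r \omega_2 a^\ell = m_1 \cdots m_t \in M^*$. If some factorization boundary falls within the central $a^r$-block, say at position $i + |\omega_1| + j$ for some $0 \leq j \leq r$, then splitting there yields $a^i \omega_1 a^j \in M^*$ and $a^{r-j} \omega_2 a^\ell \in M^*$ with $j + (r-j) = r$, witnessing the claim. Otherwise the whole $a^r$-block sits strictly inside a single factor $m_s$; I would pass to the enlarged word $a^i \omega_1 a^{r + n} \omega_2 a^\ell$, which is again in $M^*$, and whose longer central $a$-block must contain a factor equal to $a^n$ (again the only pure $a$-power of $M$); this new factorization boundary yields exponents $j, k \in \entiers$ with $j + k \equiv r \pmod n$ and the required witnesses.

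The main obstacle is the passage between the $r$-length and $(r+n)$-length central $a$-blocks. Naive insertion or removal of an $a^n$-block inside a word of $M^*$ does not in general preserve $M^*$-membership, so this step must be argued through the maximality of $M$. The cleanest route I see is to use Theorem~\ref{Preliminaire:th:schutz} to fix $\CARD{C_M(\omega)} = n$ uniformly in $\omega$; combined with the easy half of the identity and an elementary inclusion between $C_M(\omega_1 a^r \omega_2)$ and $C_M(\omega_1 a^{r+n} \omega_2)$, maximality then forces these two cbc to coincide, legitimizing the passage and closing the argument.
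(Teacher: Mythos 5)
Your high-level strategy is the same as the paper's: reduce stability to an identity of the form $C_M(\omega_1)\circ_r C_M(\omega_2)=C_M(\omega)$ for a suitable word $\omega$, and observe that $\B\PAR{a^*\B}^*$ is closed under the relevant concatenation. The difference is that you insist on $\omega=\omega_1a^r\omega_2$ with the middle exponent exactly $r$, and every obstacle you then meet stems from that choice. The paper instead takes $\omega=\omega_1a^{r+tn}\omega_2$ with $t$ so large that $r+tn\geq 2\max_{m\in M}|m|$ (this is where finiteness of $M$ is used). With a middle block that long, in any $M$-factorization of $a^i\omega_1a^{r+tn}\omega_2a^\ell$ the codewords covering the centre of the block cannot reach a non-$a$ letter on either side, so they are pure powers of $a$ and hence equal to $a^n$; both inclusions then fall out directly, and no insertion or deletion of an internal $a^n$-block is ever required.

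As written, your argument has several genuine gaps, all concentrated at the passage between exponents $r$ and $r+n$. (1) In the direction $\subseteq$, a factorization boundary inside the central $a^{r+n}$-block does \emph{not} yield a factor $a^n$ inside that block: the two codewords adjacent to the boundary may each contain letters of $\omega_1$ or $\omega_2$. Concretely, for the code $E$ of the paper ($n=4$), $\PAR{a^2ba}\PAR{a^3b}=a^2ba^4b$ is the unique $E$-factorization of that word and contains no factor $a^4$; the required membership $a^2ba^0b=a^2b^2\in E^*$ holds only because of the unrelated codeword $a^2b^2$, so there is nothing to ``delete''. (2) In the direction $\supseteq$, the claim that $a^i\omega_1a^{r+n}\omega_2a^\ell\in M^*$ is unjustified: inserting $a^n$ preserves $M^*$-membership only at a factorization boundary, which is exactly the case you have excluded. (3) Your normalization of witnesses to exponents in $\entiers$ is not available either: the definition of $C_M(\omega)$ only records residues, and the smallest witness exponents are bounded by $\max_{m\in M}|m|$, not by $n$, so $j+k$ can equal $r+sn$ for $s\geq 2$. (4) The ``elementary inclusion'' between $C_M(\omega_1a^r\omega_2)$ and $C_M(\omega_1a^{r+n}\omega_2)$ that you invoke to finish via maximality is never exhibited and is not elementary: either direction is precisely the internal insertion/deletion problem. (The device ``cardinality $n$ plus one inclusion implies equality'' is sound and does give, say, $C_M(\omega a^n)=C_M(\omega)$, but for an internal block you have no inclusion to start from.) Replacing $a^r$ by $a^{r+tn}$ with $t$ large removes all four problems at once, which is why the paper's proof is three lines long.
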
    
    
    \begin{proof}
    Let~$M$ be a finite maximal code and~$n$ the order of~$a$. 
    According to Proposition~\ref{prop:cbc}, 
	the set~$\EnsCal{C}_M$ (respectively~$\EnsCal{C}_M'$) is a set of~$n$-cbc.
    Moreover, for all~$C_1, C_2 \in \EnsCal{C}_M$ (resp.~$\EnsCal{C}_M'$), there 
    exists~$\omega_1, \omega_2 \in \Alpha^*$ (resp.~$\B\PAR{a^*\B}^*$) such 
    that~$C_1 = C(\omega_1)$ and~$C_2 = C(\omega_2)$. 
    For any~$r \in \entiers$, we have  
    \begin{equation*}
    	C_1 \circ_r C_2 = C_{M}(\omega_1 a^{r+tn} \omega_2) \in \EnsCal{C}_M 
    	 \hspace{5 pt} \PAR{\text{resp. } \EnsCal{C}_M'}, 
    \end{equation*}
    for~$t \geq \frac{2}{n}\underset{m \in M}{\max}\ENS{|m|} - \frac{r}{n}$.
    \end{proof}
    
    For example, we obtain that the set~$\EnsCal{C}_E'$ is equal to the 
    set~\eqref{ex:code_max_stable}, when~$E$ is the set~\eqref{ex:code_max}.
    
    Proposition~\ref{Prop_code} thus gives a criterion that a code must 
    satisfies in order to be included in a finite maximal code.
    
    \begin{example} Suppose that the code
    \begin{equation} \label{Stable:ex:non_inclus}
    	\ENS{a^5,ab,aba^2, a^2b^2,ab^2}
    \end{equation}
    is included in a finite maximal code~$M$. 
    One can notice that the code~\eqref{Stable:ex:non_inclus} satisfies the necessary 
    conditions implied by 
    Theorem~\ref{Preliminaire:th:schutz} and Proposition~\ref{prop:cbc}.
    For example, the codes
	\begin{equation} \label{Stable:ex:critere}
		 \ENS{ab,aba^2} \subseteq C_M(b)
		 \text{ and } 
		 \ENS{a^2b,ab} \subseteq C_M(bb) 	
   	\end{equation}    
   	are respectively included in the~$5$-cbc 
   	\begin{equation*}
		\ENS{ab,aba^2,ba^4, ba^3, ba} 
		\text{ and } 
		\ENS{a^2b,ab, a^4b, a^3b, b}.   	
   	\end{equation*}
   	
   	However, thanks to  an exhaustive computer exploration of the (finite) set of~$5$-cbc,
   	we found, using the algorithm from Proposition~\ref{prop:algo_compatibilite}, 
   	that none of the cbc containing~\eqref{Stable:ex:critere}   	
   	are compatible.
   	Thus, according to Proposition~\ref{Prop_code}, the 
   	code~\eqref{Stable:ex:non_inclus} is not 
   	included in a finite maximal code. 
    \end{example}

	\section{Border}

	In this section, we first show that each stable set can be associated 
	to a notion that we call \textit{border}. 
	It is a generalization of a notion 
	originally introduced on codes satisfying the factorization conjecture 
	in section~5.3 of~\cite{Felice}.
	It exhibits a link between factorizations and stable sets. 
	Secondly, we show various operations to build \textit{borders} from others. 
	
	\subsection{Definition and existence}

	\begin{definition}
	Given~$P, Q \subseteq \mathbb{Z}$, we say that the ordered 
	pair~$\PAR{P, Q}$ is a \DEF{border} 
	of an~$n$-cbc~$X$ if
	\begin{equation*}
   		a^P \, \SOM{X} \, a^Q \Mod a^{\entiers}ba^{\entiers}
   		\hspace{5pt}\PAR{\Mod \sum\limits_{i,j \in \entiers} a^iba^j}.
   	\end{equation*}
   	More generally, we say that it is a border of a set of cbc if it borders
   	each of its elements.
	\end{definition}

	\begin{example}
		The ordered pair 
		\begin{equation}\label{exemple:bord}
			\PAR{\ENS{ 2, 4 } , \ENS{ 0, 2, 4, 6 }}
		\end{equation}
		and the factorization~\eqref{exemple:factorisation}
		are borders of the~$8$-cbc
		\begin{equation} \label{exemple:cbc}
			\ENS{b, ba, aba^2, a^3ba^3, a^4b, 
			a^4ba, a^5ba^2, a^7ba^7 }.
		\end{equation}
		The factorization
		\begin{equation} \label{exemple:bord_Krasner}
			\PAR{\ENS{ 0 } , \ENS{ 0, 1, 2, 3 }}
		\end{equation}
		is a border of the stable set~\eqref{ex:code_max_stable}.
	\end{example}	
	
	Given an~$n$-cbc~$X$, note that~$(P,Q)$ borders~$X$ if and only if~$(Q,P)$ borders
	its dual
	\begin{equation*}
		\DUAL{X} := \ENS{a^jba^i, a^iba^j \in X}
	\end{equation*}	
	and that a factorization~$(P,Q)$ borders~$\ENS{X}^\circ$ 
	if and only if
	\begin{equation*}
		a^P \frac{1}{1-\SOM{X}} a^Q \Mod \SOM{\Alpha^*_{/a^n = \varepsilon}},
	\end{equation*}
	where~$\Alpha^*_{/a^n = \varepsilon}$ is the 
	quotient of~$\Alpha^*$ by the 
	relation~$a^n = \varepsilon$.

	We write~$x \dans X$ if and only if there 
	exists~$y \in X$ such that~$x \Mod y$.
	For any~$Y \subseteq a^*ba^*,n \geq 1$, and~$k \in \entiers$, we set
	\begin{equation*}
		\L{k}{n}{Y} := \ENS{\MOD{i}, \, a^iba^k \dans Y}, \,
   		L(Y) := \ENS{\MOD{i}, \, a^iba^j \in Y},
	\end{equation*}	
	and
	\begin{equation*}
		\R{k}{n}{Y} := \ENS{\MOD{j}, \, a^kba^j \dans Y}, \,	
		R(Y) := \ENS{\R{k}{n}{Y}, \, k \in L(Y)}.
    \end{equation*}
    For example, $L(X) = \ENS{0,1,3,4,5,7}$ and~$R(X) = \ENS{\ENS{0,1},\ENS{2},\ENS{3},\ENS{7}}$ 
    when~$X$ is~\eqref{exemple:cbc}.
	We say that an~$n$-cbc~$X$ \DEF{borders} a set~$\EnsCal{E}$ if and only if for 
	any~$R \in R(X)$, the ordered pair~$\PAR{R, L(X)}$ is a factorization of 
	size~$n$ that borders~$\EnsCal{E}$.

	\begin{proposition} \label{prop:stable}
	Any stable set is bordered by one of its elements.
	\end{proposition}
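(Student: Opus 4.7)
The plan is to exhibit an element $E \in \EnsCal{S}$ that borders $\EnsCal{S}$, chosen so that $E$ simultaneously minimizes $\CARD{L(X)}$ and $\CARD{\bigcup R(X)}$ among $X \in \EnsCal{S}$. Such an $E$ exists because the composition formula directly yields $L(X \circ_r Y) \subseteq L(X)$ and $\bigcup R(X \circ_r Y) \subseteq \bigcup R(Y)$; hence if $F, G \in \EnsCal{S}$ separately realize the two minima, stability forces $E := F \circ_0 G$ to realize both. Write $d_L := \CARD{L(E)}$ and $d_R := \CARD{\bigcup R(E)}$.

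Next I would derive the row and column factorization structure of $E$. By stability and minimality, $L(E \circ_r Y) = L(E)$ for every $Y \in \EnsCal{S}$ and every $r \in \entiers$; specializing to $Y = E$ and varying $r$, each row satisfies $\R{i}{n}{E} \cap \PAR{r - L(E)} \neq \emptyset$, hence $\R{i}{n}{E} + L(E) = \Z$. The double count against $\sum_{i \in L(E)} \CARD{\R{i}{n}{E}} = n$ then forces $\CARD{\R{i}{n}{E}} = n/d_L$ and that $\PAR{\R{i}{n}{E}, L(E)}$ is a factorization of $\Z$ for every $i \in L(E)$. Applying the same argument to $\DUAL{E}$ inside the (also stable) set $\DUAL{\EnsCal{S}}$ yields the dual statement: $\PAR{\L{j}{n}{E}, \bigcup R(E)}$ is a factorization of $\Z$ and $\CARD{\L{j}{n}{E}} = n/d_R$ for every $j \in \bigcup R(E)$.

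It remains to show that $\PAR{R, L(E)}$ borders every $Y \in \EnsCal{S}$, for each $R \in R(E)$. Written out, this is equivalent to $\PAR{Y, R \times L(E)}$ being a factorization of the abelian group $\Z \times \Z$, which I would verify by Fourier analysis: for each pair of characters $\PAR{\chi_1, \chi_2}$ of $\Z$, the product $\widehat{Y}\PAR{\chi_1, \chi_2} \cdot \widehat{R}(\chi_1) \cdot \widehat{L(E)}(\chi_2)$ must equal $\widehat{\Z \times \Z}\PAR{\chi_1, \chi_2}$. The trivial-character case is a total count; when exactly one character is nontrivial, the vanishing follows from the convolution identities $\SOM{R} \cdot p_Y = \PAR{n/d_L}\SOM{\Z}$ and $\SOM{\L{j}{n}{E}} \cdot q_Y = \PAR{n/d_R}\SOM{\Z}$ (with $p_Y(k) := \CARD{\R{k}{n}{Y}}$ and $q_Y(\ell) := \CARD{\L{\ell}{n}{Y}}$, obtained from the forced row- and column-sum constancy of $E \circ_r Y$ and $Y \circ_r E$), together with the marginal identity $\sum_j \widehat{\L{j}{n}{E}}(\chi) = \PAR{n/d_L}\,\widehat{L(E)}(\chi)$ that supplies a $j$ with $\widehat{\L{j}{n}{E}}(\chi_2) \neq 0$ whenever $\widehat{L(E)}(\chi_2) \neq 0$.

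The hardest case, and the main obstacle, is when both $\chi_1$ and $\chi_2$ are nontrivial. The plan is to Fourier-transform the matrix identity $M_{E \circ_r Y} = M_E \, P_r \, M_Y$ in both coordinates: the row-factorization of each $E \circ_r Y$ forces $\widehat{M_{E \circ_r Y}}\PAR{\chi_1, \chi_2} = 0$ whenever $\widehat{L(E)}(\chi_2) \neq 0$, uniformly in $r$; reading this vanishing as a convolution identity in $r$ and Fourier-transforming once more yields $\widehat{M_E}(\chi, \psi) \cdot \widehat{M_Y}(\psi, \chi_2) = 0$ for every pair $(\chi, \psi)$; specializing $\psi = \chi_1$, the hypothesis $\widehat{R}(\chi_1) \neq 0$ prevents $\widehat{M_E}(\chi, \chi_1)$ from vanishing for all $\chi$, forcing $\widehat{M_Y}\PAR{\chi_1, \chi_2} = 0$ as desired.
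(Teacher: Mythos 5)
Your proposal is correct, but it follows a genuinely different and much heavier route than the paper's. The paper argues by descent on $\CARD{L(Y)}$: if some $Y \in \EnsCal{S}$ fails to border $\EnsCal{S}$, a witness of the failure --- an element $a^iba^j$ not covered by $a^{\R{k}{n}{Y}} X a^{L(Y)}$, or an $a^i$ not covered by $a^{\R{k}{n}{Y}}a^{L(Y)}$ --- lets one form $Y' := Y \circ_i X \circ_j Y$ (or $Y \circ_i Y$) in $\EnsCal{S}$ with $k \in L(Y)\setminus L(Y')$, so $\CARD{L(Y')} < \CARD{L(Y)}$, and iteration terminates at a bordering element. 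You instead fix a doubly minimal $E$ up front and verify the bordering property by character sums on the composition identity $M_{E\circ_r Y} = M_E P_r M_Y$. Your steps do check out: minimality forces every row of $E$, and of each $E \circ_r Y$, to form a factorization with $L(E)$ of constant size $n/d_L$; the mixed-character case closes as you describe, since the existence of $\chi$ with $\widehat{M_E}(\chi,\chi_1) \neq 0$ when $\widehat{R}(\chi_1)\neq 0$ is just Fourier inversion against the row $R$; and the single-nontrivial-character cases follow from the constant row- and column-sum convolutions. Two points must be made explicit in a full write-up: the matrix identity itself is exactly where stability is consumed --- one has to rule out entries $\geq 2$ in $M_E P_r M_Y$, e.g.\ by comparing $\sum_r \CARD{E\circ_r Y} = n^2$ with the total mass of the products --- and the dual (column) statements need the observation that $\DUAL{\EnsCal{S}}$ is again stable. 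As for what each approach buys: the paper's descent is elementary, eight lines long, and can be started from any prescribed element of the set, a refinement exploited later in the proof of Theorem~\ref{th:bord:FN}; yours extracts more structure along the way (constancy of all row and column sums of the minimal element and of all its composites), at the cost of Fourier machinery the paper never needs.
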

	
	\begin{proof}
		Let~$\EnsCal{S}$ be a stable set of~$n$-cbc. 
		If~$Y \in \EnsCal{S}$ does not borders~$\EnsCal{S}$ then there  
		exists~$i,j \in \entiers$, $k \in L(Y)$, and~$X \in \EnsCal{S}$ such that 
		\begin{equation*}
			a^i b a^j \not\dans a^{\R{k}{n}{Y}} \, X \, a^{L(Y)} 
			\text{ or } a^i \not\dans a^{\R{k}{n}{Y}} a^{L(Y)}.
		\end{equation*}
		Thus~$Y' := Y \circ_i X \circ_j Y$ or~$Y' := Y \circ_i Y$ is a cbc belonging 
		to~$\EnsCal{S}$ such that~$\CARD{L(Y')} < \CARD{L(Y)}$, 
		because~$k \in L(Y)$ and~$k \notin L(Y')$. 
		
		The cardinality of a set being a positive integer, 
		we obtain the expected result by iterating this process at most~$\CARD{L(X)}$ times 
		starting from any element~$X$ of~$\EnsCal{S}$.
	\end{proof}
	
	Thus any stable set admits a bordering factorization and conversely for any factorization, 
	there exists a cbc that it borders. Indeed, a factorization~$(P, Q)$
	 borders the cbc~$a^Qba^P$.

	It is possible to build, by compositions of elements of a compatible set, a cbc 
	in a more restrictive form which borders the set.
	
	\begin{theorem} \label{th:bord:FN}
	Let~$\EnsCal{E}$ be a compatible set of~$n$-cbc. For all~$Z \in \EnsCal{E}$, there 
	exists~$r_1, \dots, r_{k} \in \entiers$
	and~$X_1, \dots,$ $X_{k} \in \EnsCal{E}$  such that the cbc
	\begin{equation*} 
		X := Z \circ_{r_1} X_1 \circ_{r_2} \dots \circ_{r_{k}} X_{k}
	\end{equation*}
	borders~$\EnsCal{E}$ and such that for all~$R, R' \in R(X)$,
	\begin{equation*}
		R \cap R' \neq \emptyset \implies R = R'.
	\end{equation*}
	\end{theorem}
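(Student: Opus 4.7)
The plan is to build $X$ in two phases, each reducing a different measure. In the first phase I produce, by composing $Z$ with elements of $\EnsCal{E}$, a cbc $X^{(0)}$ of the required form that borders $\EnsCal{E}$. The construction repeats the iteration from the proof of Proposition~\ref{prop:stable}: if the current candidate $Y$ does not border $\EnsCal{E}$, a witness yields $Y' := Y \circ_i W \circ_j Y$ (or $Y \circ_i Y$) with $\CARD{L(Y')} < \CARD{L(Y)}$. Writing $Y = Z \circ_{r_1} X_1 \circ \cdots \circ_{r_m} X_m$ with each $X_\ell \in \EnsCal{E}$, the second copy of $Y$ begins with $Z \in \EnsCal{E}$, so $Y'$ expands into the same form; compatibility of $\EnsCal{E}$ ensures $Y'$ is an $n$-cbc. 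The iteration terminates in at most $\CARD{L(Z)}$ steps, yielding $X^{(0)}$ that borders $\EnsCal{E}$.

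In the second phase I iteratively replace the current cbc $X$ by $X \circ_r X$ for a carefully chosen $r$, strictly reducing $\CARD{R(X)}$ until the partition property holds. Two key computations, valid when $X$ borders $\EnsCal{E}$, are
\begin{equation*}
L\PAR{X \circ_r X} = L(X), \qquad \R{i}{n}{X \circ_r X} = \R{k^*_i(r)}{n}{X},
\end{equation*}
where $k^*_i(r)$ is the unique element of $L(X)$ with $\MOD{r - k^*_i(r)} \in \R{i}{n}{X}$ (uniqueness coming from the factorization $\PAR{\R{i}{n}{X}, L(X)}$). Hence $R(X \circ_r X) \subseteq R(X)$, and $X \circ_r X$ still borders $\EnsCal{E}$ because the inherited pairs already factor $\Z$ and border $\EnsCal{E}$.

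Now suppose the partition property fails for $X$, and pick distinct $\rho_1, \rho_2 \in R(X)$ with $\rho_1 \cap \rho_2 \neq \emptyset$. Set $t := \CARD{R(X)}$, $\ell := \CARD{L(X)}$, $A_\rho := \ENS{k \in L(X) : \R{k}{n}{X} = \rho}$ and $U := \bigcup_{k \in L(X)} \R{k}{n}{X}$. The overlap of $\rho_1, \rho_2$ yields
\begin{equation*}
\CARD{U} \leq \sum_{\rho \in R(X)} \CARD{\rho} - \CARD{\rho_1 \cap \rho_2} < t \cdot \frac{n}{\ell}.
\end{equation*}
Choosing $\rho$ with minimal $\CARD{A_\rho}$, the pigeonhole bound $\CARD{A_\rho} \leq \ell/t$ gives $\CARD{A_\rho} \cdot \CARD{U} < n$, hence $\Z \setminus (A_\rho + U) \neq \emptyset$. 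Any $r$ in this set satisfies $\MOD{r - k} \notin U$ for every $k \in A_\rho$, so $k^*_i(r) \notin A_\rho$ for all $i \in L(X)$, hence $\rho \notin R(X \circ_r X)$ and $\CARD{R(X \circ_r X)} < \CARD{R(X)}$. Iterating terminates with a cbc satisfying the partition property, and each step preserves the required form since $X \circ_r X$ unfolds into a longer chain starting with $Z$ whose intermediate factors lie in $\EnsCal{E}$ (the inner copies of $Z$ being admissible since $Z \in \EnsCal{E}$).

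I expect the most delicate point to be the cardinality estimate producing the admissible $r$: it combines the strict inequality $\CARD{U} < tn/\ell$ from the failed partition property with the pigeonhole bound $\CARD{A_\rho} \leq \ell/t$, and relies on selecting a $\rho$ of minimal multiplicity. Once this combinatorial lemma is in hand, the reduction loop is straightforward.
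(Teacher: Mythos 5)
Your proof is correct, and its first phase is exactly the paper's: reuse the iteration from Proposition~\ref{prop:stable}, observing that each replacement $Y \mapsto Y \circ_i W \circ_j Y$ (with $W \in \EnsCal{E}$) keeps the leading factor equal to $Z$ and all subsequent factors in $\EnsCal{E}$. Where you genuinely diverge is the second phase. The paper also shrinks $R(X)$ by self-composition, but it takes $r \in R \cap R'$ for two overlapping distinct classes, sets the composition index to $\MOD{r+\ell}$ for an arbitrary $\ell \in L(X)$, and asserts in one line that $R\PAR{X \circ_{\MOD{r+\ell}} X} \subsetneq R(X)$ with $L$ unchanged. You instead identify $\R{i}{n}{X \circ_r X} = \R{k^*_i(r)}{n}{X}$, observe that a class $\rho$ survives exactly when $r \in A_\rho + U$, and use the strict bound $\CARD{U} < tn/\ell$ (forced by the overlap) together with the pigeonhole bound $\CARD{A_{\rho}} \leq \ell/t$ for a minimal-multiplicity class to exhibit an admissible $r$ outside $A_\rho + U$. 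What your route buys is a certificate that a \emph{specific} element of $R(X)$ is removed: the paper's choice collapses the indices $k, k'$ with $\R{k}{n}{X} = R$, $\R{k'}{n}{X} = R'$ onto a common image, which shrinks the image of $i \mapsto k^*_i\PAR{\MOD{r+\ell}}$ inside $L(X)$ but does not by itself obviously shrink the set of \emph{distinct} right-classes when some class has multiplicity greater than one; your counting argument closes that gap at the cost of a slightly longer combinatorial lemma. Both versions preserve the bordering property for the same reason ($R$ shrinks, $L$ is fixed) and terminate in at most $\CARD{R(X)}$ iterations, and your unfolding of $X \circ_r X$ into a chain headed by $Z$ with all later factors in $\EnsCal{E}$ correctly delivers the required normal form.
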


	\begin{proof}
		Let~$\EnsCal{E}$ be a compatible set of cbc and~$Y \in \EnsCal{E}^\circ$
		a cbc bordering~$\EnsCal{E}$.
		If there exists~$r \in R \cap R'$, such that~$R \neq R'$ and~$R, R' \in R(Y)$ then 
		for any~$\ell \in L(Y)$, the cbc
		\begin{equation*}
			Y' := Y \circ_{\MOD{r+\ell}} Y
		\end{equation*}	
		satisfies the facts that~$R(Y')$ is strictly included in~$R(Y)$ and that~$L(Y') = L(Y)$.
		Thus~$Y' \in \EnsCal{E}^\circ$ and~$Y'$ also borders~$\EnsCal{E}$.
		
		The set~$R(X)$ of a cbc~$X$ cannot be empty, so we obtain the expected result 
		by iterating this process at most~$\CARD{R(Y)}$ times
		starting from 
		a cbc~$Y$ given by Proposition~\ref{prop:stable}. 
		Moreover, according to the proof of Proposition~\ref{prop:stable}, such~$Y$ can be chosen  
		by starting from any~$Z \in \EnsCal{E}$.
		This concludes the proof.
	\end{proof}		
	
	One of the consequences of Theorem~\ref{th:bord:FN} is that for any finite maximal code~$M$
	containing~$a^n$ and any word~$\omega_1 \in \Alpha^*$, 
	there exists a word~$\omega \in \Alpha^*$ such that 
	\begin{equation*}
        \SOM{C_M\PAR{\omega_1 \omega}} \Mod \sum\limits_{k \in \ENTIERS{t}} a^{L_k}ba^{R_k},
   	\end{equation*}
   	where~$R_i \cap R_j = \emptyset$ when~$i \neq j$, and such that
   	\begin{equation*}
   	 \PAR{ L_1 \sqcup \dots \sqcup L_t, R_k }_{k \in \ENTIERS{t}}
   	\end{equation*}
    are factorizations bordering~$\EnsCal{C}_M$.
	
	\subsection{New borders from old ones}
	
	Being a border is closed under translations and multiplications.
	
	\begin{proposition} \label{prop:bord_trans}
		If~$(P, Q)$ is a border of a cbc~$X$ 
		then for all~$i, j \in \mathbb{Z}$,~$d_1$ prime to~$\CARD{P}$, 
		and~$d_2$ prime to~$\CARD{Q}$, the ordered 
		pairs
		\begin{equation*}
			(P+i,Q+j) \text{ and } (d_1 P, d_2 Q) 
		\end{equation*}
		are borders of~$X$.
	\end{proposition}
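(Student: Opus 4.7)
The translation claim is a direct calculation: multiplying the defining congruence $a^P\,\SOM{X}\,a^Q \Mod \sum_{k,\ell \in \entiers} a^k b a^\ell$ on the left by $a^i$ and on the right by $a^j$ shifts the exponents of the right-hand side by $(i,j)$, and this shift leaves the full sum $\sum_{k,\ell \in \entiers} a^k b a^\ell$ invariant modulo $a^n = \varepsilon$, since $k \mapsto k+i$ and $\ell \mapsto \ell+j$ are bijections of $\entiers$ modulo~$n$.

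For the multiplication claim, I would first recast the border condition as a family of identities in the group algebra $\mathbb{Z}[a]/(a^n - 1)$. Writing $\SOM{X} = \sum_{(u,v) \in S} a^u b a^v$ and extracting, for each $\ell \in \entiers$, the coefficient of $b\,a^\ell$ on both sides of the border congruence, one finds that the border condition is equivalent to
\begin{equation*}
\PAR{\sum_{p \in P} a^p} \cdot \PAR{\sum_{\substack{(u,v) \in S,\, q \in Q \\ v + q \Mod \ell}} a^u} \Mod \sum_{k \in \entiers} a^k \quad\text{for every } \ell \in \entiers.
\end{equation*}
The second factor depends only on~$X$ and~$Q$, not on~$P$. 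Hence the claim that $\PAR{d_1 P, Q}$ borders~$X$ reduces to verifying that replacing the first factor by $\sum_{p \in P} a^{d_1 p}$ preserves each such congruence, under the assumption $\gcd(d_1, \CARD{P}) = 1$.

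This is a multi-set version of Sands' Replacement Lemma from the theory of factorizations of abelian groups (see, e.g.,~\cite{szabo2004topics}). The character-theoretic proof shows that the zero set of $\chi \mapsto \sum_{p \in P} \chi(p)$ on the character group of $\Z$ is stable under $\chi \mapsto \chi \circ (k \mapsto d_1 k)$ whenever $\gcd(d_1, \CARD{P}) = 1$, by a Galois argument on the cyclotomic factors of the integer polynomial $\sum_{p \in P} x^p$. Since this stability depends only on $P$ being a set and not on the other factor, it carries over without change to the multi-set setting. The symmetric argument---decomposing along the left-hand exponent instead---gives $\PAR{P, d_2 Q}$ as a border; composing the two transformations yields $\PAR{d_1 P, d_2 Q}$. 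The main obstacle is this Replacement Lemma in its multi-set form; once it is in hand, the rest is routine bookkeeping.
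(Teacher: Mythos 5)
Your proposal is correct and follows essentially the same route as the paper: the paper likewise reduces the border condition to a family of genuine factorizations obtained by slicing $a^P\,\SOM{X}\,a^Q$ along one exponent (it fixes the left exponent to get pairs $\PAR{\R{k}{n}{a^PX},Q}$, you fix the right one to keep $P$ as a factor), and then invokes translation invariance together with Sands' multiplication result (Proposition~3 of the cited reference) plus symmetry. The only substantive difference is that your ``multi-set Replacement Lemma'' obstacle is illusory: since the border congruence has all coefficients exactly~$1$, a repeated element in your second factor would force a coefficient~$\geq 2$ in the product, so each slice is automatically a set and the standard Sands lemma applies verbatim.
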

	
	\begin{proof}
		First, we recall some properties about factorizations.
		It is well known that for any~$j \in \mathbb{Z}$,
		an ordered pair~$(P,Q)$ is a factorization of size~$n$ if and only if~$(P,Q + j)$ 
		is a factorization of size~$n$, since for all~$p_1, p_2 \in P$ 
		and~$q_1, q_2 \in Q$, we have
		\begin{equation*}
			\MOD{p_1 + \PAR{q_1 + j}} = 
			\MOD{p_2 + \PAR{q_2 + j}} \iff
			\MOD{p_1 + q_1} = 
			\MOD{p_2 + q_2}.
		\end{equation*}
		Moreover, according to Proposition~3 of~\cite{Sands}, 
		if~$(P,Q)$ is a factorization and~$d$ a number prime to~$\CARD{Q}$ then 
		the ordered pair~$(P,d Q)$ is a factorization.
		
		An ordered pair~$(P,Q)$ borders an~$n$-cbc~$X$ if and only if for 
		any~$k \in \entiers$, the ordered pair
		\begin{equation*}
			\PAR{ \R{k}{n}{a^P X}, Q }
		\end{equation*}
		is a factorization of size~$n$. 
		Assume that~$(P,Q)$ borders~$X$ then, according to the previous recalls, 
		for any~$k \in \entiers$,~$j \in \mathbb{Z}$, and~$d$ prime to~$\CARD{Q}$, 
		the ordered pair 
		\begin{equation*}
			\PAR{ \R{k}{n}{a^P X}, j + d Q }
		\end{equation*} 
		is a factorization. Thus~$(P, j + d Q)$ borders~$X$.
		We obtain the expected result thanks to a symmetrical argument.
	\end{proof}

	In some cases, Proposition~\ref{prop:bord_trans} enables to explicitly compute a border.
	
	\begin{proposition} \label{prop:bord:ss_groupe}
		If an~$n$-cbc~$X$ is bordered by~$(P,Q)$ 
		such that~$p := |P|$ is prime to~$q := |Q|$ then the factorization 
		\begin{equation} \label{eq:bord:premier}
			\PAR{ q \ENTIERS{p}, \, p\ENTIERS{q} }
		\end{equation} 
		borders~$X$.
	\end{proposition}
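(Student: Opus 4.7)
The plan is to apply Proposition~\ref{prop:bord_trans} directly, with the multipliers $d_1=q$ and $d_2=p$. Since $\gcd(p,q)=1$, the integer~$q$ is prime to $\CARD{P}=p$ and~$p$ is prime to $\CARD{Q}=q$, so the hypotheses are met and the ordered pair $(qP,\,pQ)$ borders~$X$.

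It remains to identify the sets $qP$ and $pQ$ modulo~$n$. Multiplication by~$q$ sends~$\Z$ into the subgroup~$q\Z$, which has order $n/q=p$ (because $q \mid n$) and equals $q\ENTIERS{p}$. Symmetrically $pQ\subseteq p\Z = p\ENTIERS{q}$, a subgroup of order~$q$. In particular $\CARD{qP}\leq p$ and $\CARD{pQ}\leq q$.

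To upgrade these inclusions to equalities I would count monomials in the border congruence
\begin{equation*}
    a^{qP}\,\SOM{X}\,a^{pQ} \Mod \sum_{i,j\in\entiers} a^{i} b a^{j}.
\end{equation*}
The left side contributes $\CARD{qP}\cdot\CARD{X}\cdot\CARD{pQ}=n\cdot\CARD{qP}\cdot\CARD{pQ}$ monomials, while the right side is a sum of $n^2$ distinct monomials each with multiplicity one. Hence $\CARD{qP}\cdot\CARD{pQ}=n$, which combined with $\CARD{qP}\leq p$ and $\CARD{pQ}\leq q$ forces $\CARD{qP}=p$ and $\CARD{pQ}=q$. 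Therefore $qP=q\ENTIERS{p}$ and $pQ=p\ENTIERS{q}$, so the factorization~\eqref{eq:bord:premier} borders~$X$.

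The only subtle point is that multiplication by~$q$ (respectively~$p$) could a priori collapse~$P$ (resp.~$Q$), since these multipliers are not coprime to~$n$; the cardinality count is precisely what simultaneously rules this out and pins the image down to the whole subgroup. Beyond this, the real content of the argument is hidden inside Proposition~\ref{prop:bord_trans}, which in turn invokes Sands' theorem on multiplier-stability of factorizations.
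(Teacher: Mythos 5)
Your proof takes essentially the same route as the paper: apply Proposition~\ref{prop:bord_trans} with the multipliers~$q$ and~$p$, then identify~$qP = q\ENTIERS{p}$ and~$pQ = p\ENTIERS{q}$ by combining the containment in the subgroups with a cardinality argument (the paper merely asserts that~$qP$ has~$p$ distinct elements, whereas your monomial count in the border congruence makes this explicit, which is a welcome extra detail). The one step the paper includes that you omit is the verification that~$\PAR{q\ENTIERS{p}, p\ENTIERS{q}}$ is indeed a factorization of~$\Z$ --- done there via B\'ezout's identity~$up+vq=1$ --- and since being a border does not by itself imply being a factorization (e.g.\ the border~\eqref{exemple:bord} is not one), this one-line check should be added.
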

	
	\begin{proof}
		Assuming the hypotheses of the Proposition,
		the number~$q$ is prime to~$p$ so according to Proposition~\ref{prop:bord_trans}, 
		the ordered pair~$\PAR{q P,Q}$ borders~$X$. 
		The set~$q P$ contains~$p$ distinct elements, all of which are multiples of~$q$.
		Thus
		\begin{equation*}
		qP = \ENS{0,q, \dots, q(p-1)} = q\ENTIERS{p}.
		\end{equation*}			
		Symmetrically, we obtain that the ordered pair~\eqref{eq:bord:premier} borders~$X$. 
		Moreover, according to Bézout's identity~\cite{bezout1779theorie}, 
		there exists~$u,v \in \mathbb{Z}$ such 
		that~${up+vq = 1}$. Thus for all~$k \in \entiers$, we have
		\begin{equation*}
			k = \MOD{p(ku) + q(kv)} \in p\ENTIERS{q} + q \ENTIERS{p}.
		\end{equation*}
		Thus the ordered pair~\eqref{eq:bord:premier} is a factorization.
	\end{proof}
	
	The composition of cbc from a stable set brings out bordering factorizations.
	
	\begin{theorem} \label{th:bordante}
    	Let~$\EnsCal{S}$ be a stable set of~$n$-cbc and~$(P,Q)$ one of its borders. 
    	For all~${k_1,k_2 \in \entiers}$ and~$X,Y \in \EnsCal{S}$,
		the ordered pairs
		\begin{equation*}
			\PAR{P, \, \L{k_2}{n}{Y a^Q}},\, 
			\PAR{\R{k_1}{n}{a^P X},\,\L{k_2}{n}{Y a^Q}},
			\text{ and }
			\PAR{ \R{k_1}{n}{a^P X},\, Q }
		\end{equation*}
		 are factorizations bordering~$\EnsCal{S}$.
    \end{theorem}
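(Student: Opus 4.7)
The plan is to work in the quotient of $\mathbb{Z}\langle\langle\Alpha\rangle\rangle$ by the relation $a^n=\varepsilon$, first establish the third factorization of the statement, then derive the other two by duality and composition.

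\textbf{Key identity.} For every $Z\in\EnsCal{S}$, I would prove
\begin{equation*}
a^P\,\SOM{X}\,\SOM{Z}\,a^Q\Mod \sum_{i,r,\ell\in\entiers} a^iba^rba^\ell.
\end{equation*}
Each pair $(x,z)\in X\times Z$, with $x=a^{i_x}ba^{j_x}$ and $z=a^{k_z}ba^{\ell_z}$, contributes one term $a^{i_x}ba^{\MOD{j_x+k_z}}ba^{\ell_z}$ to $\SOM{X}\SOM{Z}$ in the quotient; grouping these by $r:=\MOD{j_x+k_z}$ induces a natural map $X\times Z\to\bigsqcup_{r\in\entiers}\PAR{X\circ_rZ}$. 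Stability forces $X\circ_rZ$ to be an $n$-cbc, so $\CARD{X\circ_rZ}=n$; the image therefore has total size $n^2=\CARD{X\times Z}$, and since the map is obviously surjective it is bijective. This yields
\begin{equation*}
\SOM{X}\,\SOM{Z}\Mod \sum_{r\in\entiers}\sum_{a^iba^\ell\in X\circ_rZ} a^iba^rba^\ell,
\end{equation*}
with $0/1$ coefficients. Since each $X\circ_rZ$ belongs to $\EnsCal{S}$ and is bordered by $(P,Q)$, multiplying the $r$-slice by $a^P$ on the left and $a^Q$ on the right (the central $ba^rb$ being inert) produces $\sum_{i,\ell}a^iba^rba^\ell$; summing over $r\in\entiers$ gives the key identity.

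\textbf{Slicing.} Using the border relation $a^P\SOM{X}a^Q\Mod\sum_{i,j}a^iba^j$, the same coefficient argument applied to each left-exponent slice $k$ forces $\PAR{\R{k}{n}{a^PX},Q}$ to be a factorization of $\Z$ for every $k\in\entiers$, and also
\begin{equation*}
a^P\,\SOM{X}\Mod\sum_{k\in\entiers}a^kb\cdot a^{\R{k}{n}{a^PX}}.
\end{equation*}
Substituting this into the key identity and restricting to the left-exponent-$k_1$ slice (distinct $k\in\entiers$ produce distinct prefixes $a^kb$ in the quotient, so the slicing is unambiguous), then cancelling the common prefix $a^{k_1}b$, yields
\begin{equation*}
a^{\R{k_1}{n}{a^PX}}\,\SOM{Z}\,a^Q\Mod\sum_{i,j\in\entiers}a^iba^j
\end{equation*}
for every $Z\in\EnsCal{S}$. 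This proves the third claim.

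The first claim follows by the dual argument: the set $\ENS{\DUAL{W}:W\in\EnsCal{S}}$ is stable (since $\DUAL{X}\circ_r\DUAL{Y}=\DUAL{Y\circ_r X}$) and is bordered by $(Q,P)$, so applying the third claim with $\DUAL{Y}$ in the role of $X$ and $k_2$ in the role of $k_1$ shows that $\PAR{\R{k_2}{n}{a^Q\DUAL{Y}},P}=\PAR{\L{k_2}{n}{Ya^Q},P}$ borders this dual set; dualizing returns $\PAR{P,\L{k_2}{n}{Ya^Q}}$ as a factorization bordering $\EnsCal{S}$. The second (combined) claim is then obtained by first applying the third claim to produce $\PAR{\R{k_1}{n}{a^PX},Q}$ as a new border of $\EnsCal{S}$ and then applying the first claim with this new border in place of $(P,Q)$, which yields $\PAR{\R{k_1}{n}{a^PX},\L{k_2}{n}{Ya^Q}}$. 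The main obstacle is the $0/1$ structure in the key identity: one must use the rigidity $\CARD{X\circ_rZ}=n$ coming from stability to rule out coefficient collisions from different pairs $(x,z)$ reducing to the same term modulo $a^n=\varepsilon$; once this is in hand, the remainder reduces to formal polynomial manipulations in the quotient.
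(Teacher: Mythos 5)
Your proof is correct, and it runs on the same engine as the paper's proof --- stability forces each~$X\circ_r Z$ to be an~$n$-cbc lying in~$\EnsCal{S}$, hence bordered by~$(P,Q)$ --- but the packaging is genuinely different. The paper argues pointwise and by contradiction: if one of the pairs fails to be a factorization (resp.\ fails to border some~$Z\in\EnsCal{S}$), a missing element~$i$ (resp.\ a missing word~$a^iba^j$) is transported to the missing word~$a^{k_1}ba^{k_2}$ in~$a^P\PAR{X\circ_i Y}a^Q$ (resp.\ in~$a^P\PAR{X\circ_i Z\circ_j Y}a^Q$), contradicting the border hypothesis; in particular the middle pair is handled directly through the three-fold composition~$X\circ_i Z\circ_j Y$. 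You instead prove the positive identity~$a^P\,\SOM{X}\,\SOM{Z}\,a^Q\Mod\sum_{i,r,\ell}a^iba^rba^\ell$ via the counting bijection~$X\times Z\to\bigsqcup_r\PAR{X\circ_r Z}$, slice it to obtain the two one-sided pairs, and reach the middle pair by bootstrapping (third claim, then first claim applied to the newly produced border), using only two-fold compositions plus duality. A small bonus of your route is that the cardinality bookkeeping ($\CARD{X\circ_r Z}=n$, hence~$n^2$ terms in total and~$0/1$ coefficients) is made explicit, whereas the paper's step ``not a factorization~$\Rightarrow$ some~$i$ is not generated'' silently relies on the product of the two cardinalities being~$n$; the cost is a slightly longer chain of reductions for the pair~$\PAR{\R{k_1}{n}{a^PX},\,\L{k_2}{n}{Ya^Q}}$.
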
  
    
    \begin{proof}
		Assuming the hypotheses of the Theorem,
    	if the ordered pair 
    	\begin{equation} \label{preuve:eq:couple}
    		\left(\R{k_1}{n}{a^P X},\,\L{k_2}{n}{Y a^Q}\right), 
			\text{ respectively } 
			\left(P, \, \L{k_2}{n}{Y a^Q}\right),
		\end{equation}   
		is not a factorization 
    	then there exists~$i \in \entiers$ which is not generated by the 
    	sum of its two components (modulo~$n$) and thus
    	\begin{equation*}
			a^{k_1} b a^{k_2} \not\dans a^P X \circ_i Y a^Q, 
			\text{resp. } a^{i} b a^{k_2} \not\dans a^P  Y a^Q.
		\end{equation*}
		Thus~$(P,Q)$ is not a border of~$\EnsCal{S}$.
    	Which contradicts the assumptions.
    	
    	Likewise, if the ordered pair~\eqref{preuve:eq:couple} does not borders~$\EnsCal{S}$
    	then there exists~$Z \in \EnsCal{S}$ and~$i,j \in \entiers$ such that
    	\begin{equation*}
    		 a^iba^j \not\dans a^{\R{k_1}{n}{a^P X}}
			Z a^{\L{k_2}{n}{Y a^Q}}, \,
			\text{resp. } 
			a^iba^j \not\dans a^P Z
			a^{\L{k_2}{n}{Y a^Q}}.
		\end{equation*} 
		Which implies that
		\begin{equation*}
    		a^{k_1}ba^{k_2} \not\dans a^P X \circ_i Z \circ_j Y a^Q, \,
			\text{resp. } 
			a^{i}ba^{k_2} \not\dans a^P Z \circ_j Y a^Q.
		\end{equation*}
		Thus~$(P,Q)$ does not borders~$\EnsCal{S}$.
    	Which contradicts the assumptions.
    	
    	We obtain the last case thanks to a symmetrical argument.
	\end{proof}

    \section{Haj\'{o}s cbc} \label{HajosCbc}
    
	In this section, similarly to factorization theory, we introduce a \textit{periodic} 
	and a \textit{Haj\'{o}s} notion 
	for cbc and compatible sets. Then we show that this \textit{Haj\'{o}s} notion 
	is equivalent as being bordered by  a
	\textit{Krasner factorization}.    
    
	\subsection{Periodic cbc}
    
    We introduce an operation to build bigger cbc from a smaller one.
	Given a set
   		\begin{equation*}
   			X := \ENS{a^{i_1}ba^{j_1}, \dots, a^{i_n}ba^{j_n}}
   			\subseteq a^{\entiers}ba^{\entiers}
   		\end{equation*}
   		and~$t \geq 1$, we define the operation 
   		\begin{equation*}
   			H_{t}\PAR{X} := \ENS{
   			\bigsqcup \limits_{\ell = 1}^{n}
   			\ENS{a^{i_\ell + k_{\ell,1} n}ba^{j_\ell}, \dots, 
   			a^{i_\ell + k_{\ell,t} n}ba^{j_\ell + (t-1)n}},
   			k_{1,1}, \dots, k_{n,t} \in \ENTIERS{t}}.
   		\end{equation*} 
   	For example, the dual of~\eqref{exemple:cbc} 
    is equal to
		\begin{equation} \label{exemple:hajos_cbc_2}
			\ENS{ 
    		\begin{tabular}{cc}
   				$a^{0+0 \times 4}ba^{0}$ & $a^{0+0 \times 4}ba^{0+1 \times 4}$ \\
   				$a^{1+0 \times 4}ba^{0}$ & $a^{1+0 \times 4}ba^{0+1 \times 4}$ \\
   				$a^{2+0 \times 4}ba^{1}$ & $a^{2+0 \times 4}ba^{1+1 \times 4}$ \\
   				$a^{3+0 \times 4}ba^{3}$ & $a^{3+1 \times 4}ba^{3+1 \times 4}$ \\
			\end{tabular}} \in H_2\PAR{\ENS{b, ab, a^2ba, a^3ba^3}}.
    	\end{equation}	
    	
    We extend this operation to compatible sets. 
    We write~$\EnsCal{E'} \in \EnsCal{H}_{t}\PAR{\EnsCal{E}}$,
    where~$t \geq 1$, if and only if
    \begin{equation*}
    	Y \in \EnsCal{E'} \implies  \exists X \in \EnsCal{E} 
    	\text{ such that } Y \in H_t\PAR{X}
    \end{equation*}
    and   
	\begin{equation*}
    	X \in \EnsCal{E} \implies  \exists Y \in \EnsCal{E'} 
    	\text{ such that } Y \in H_t\PAR{X}.
    \end{equation*}
    For example, if~$\EnsCal{S}$ is the stable set~\eqref{ex:code_max_stable}
    associated to the code~\eqref{ex:code_max} 
    then~$\DUAL{\EnsCal{S}} \in \EnsCal{H}_4\PAR{\ENS{\ENS{b}}}$,
    where~$\DUAL{\EnsCal{S}}$ is the set~$\ENS{\DUAL{X}, \, X \in \EnsCal{S}}$.
    
	This operation preserve the fact of being a compatible set.    
    
	\begin{proposition} \label{proposition_cbc_period}
    	Given~$t \geq 1$ and~$\EnsCal{E'} \in \EnsCal{H}_{t}\PAR{\EnsCal{E}}$,
    	the set~$\EnsCal{E}$ is a compatible set of~$n$-cbc such that~$\EnsCal{E}^\circ$ 
    	is bordered by~$\PAR{P,Q}$ 
    	if and only 
    	if~$\EnsCal{E'}$ is a compatible set of~$nt$-cbc such that~$\EnsCal{E'}^\circ$ 
    	is bordered by~$\PAR{P+n\ENTIERS{t},Q}$.
    \end{proposition}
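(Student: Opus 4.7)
The plan is to reduce the equivalence to a single formal power-series identity and then to apply it to borders, compositions, and the code property. For every $Y \in H_t(X)$ I claim
\[
    a^{n\ENTIERS{t}}\SOM{Y} \Modn{nt} a^{n\ENTIERS{t}}\SOM{X}a^{n\ENTIERS{t}};
\]
this is obtained by expanding $\SOM{Y} = \sum_{\ell,m} a^{i_\ell + c_{\ell,m}n}\, b\, a^{j_\ell + mn}$, absorbing each factor $a^{c_{\ell,m}n}$ into the left prefactor via $a^{cn}a^{n\ENTIERS{t}} \Modn{nt} a^{n\ENTIERS{t}}$ (a cyclic shift of the index set $\ENTIERS{t}$), and collecting $\sum_{m \in \ENTIERS{t}} a^{mn} = a^{n\ENTIERS{t}}$ on the right of $b$. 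The companion commutation $a^{n\ENTIERS{t}} a^I \Modn{nt} a^{\MODn{I}{n}} a^{n\ENTIERS{t}}$, valid for any $I \in \mathbb{Z}$, follows by the same cyclic shift and lets one move $a^{n\ENTIERS{t}}$ past an arbitrary power of $a$ while reducing the exponent modulo $n$.

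The border equivalence follows directly. If $(P,Q)$ borders $X$, then combining the identity, the commutation, and the border bijection $P \times X \times Q \to \ENTIERS{n}^2$ gives
\[
    a^{P+n\ENTIERS{t}}\SOM{Y}a^Q
    \Modn{nt} a^{n\ENTIERS{t}} a^P \SOM{X} a^Q a^{n\ENTIERS{t}}
    \Modn{nt} a^{n\ENTIERS{t}} a^{\ENTIERS{n}} b a^{\ENTIERS{n}} a^{n\ENTIERS{t}}
    = a^{\ENTIERS{nt}} b a^{\ENTIERS{nt}},
\]
using $a^{\ENTIERS{n}} a^{n\ENTIERS{t}} = a^{\ENTIERS{nt}}$ in the last step, so $(P+n\ENTIERS{t},Q)$ borders $Y$. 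Conversely, reducing the border equation for $Y$ modulo $n$ (with $a^{n\ENTIERS{t}} \Modn{n} t$, $\SOM{Y} \Modn{n} t\SOM{X}$, and $a^{\ENTIERS{nt}} \Modn{n} t\,a^{\ENTIERS{n}}$) gives $t^2\, a^P \SOM{X} a^Q \Modn{n} t^2\, a^{\ENTIERS{n}} b a^{\ENTIERS{n}}$, and cancellation of $t^2$ in $\mathbb{Z}$ produces the border of $X$.

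For the compatibility part I would prove a composition lemma: $Y_1 \circ_R Y_2 \in H_t\PAR{X_1 \circ_{\MODn{R}{n}} X_2}$ whenever $Y_i \in H_t(X_i)$. Each element of $Y_1 \circ_R Y_2$ is parametrised by a pair $(a^i b a^j, m) \in \PAR{X_1 \circ_{\MODn{R}{n}} X_2} \times \ENTIERS{t}$, since fixing the base element and $m$ reduces the composition constraint $\MODn{J+K}{nt} = R$ to a single equation that pins down the remaining parameter uniquely. Iterating, elements of $(\EnsCal{E}')^\circ$ lie in $H_t$ of elements of $\EnsCal{E}^\circ$, matching sizes ($|X| = n$ iff $|Y| = nt$). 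The code property projects one way directly: a non-trivial mod-$nt$ relation among $y$-sequences reduces mod $n$ to a non-trivial mod-$n$ relation among the corresponding $x$-sequences, because a direct analysis of the $m$- and $c$-parameters shows that distinct $y$-sequences with equal mod-$nt$ words must project to distinct $x$-sequences. Together with the border identity above, this settles the equivalence for the stable closures.

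The main obstacle is the opposite half of the code equivalence: lifting a non-trivial $x$-relation to a non-trivial $y$-relation. A one-step lift imposes several mod-$t$ constraints on the unknowns $m_i, m_i'$ in terms of the fixed $c_{\ell,m}$ data, and these need not be simultaneously solvable. The idea I would pursue is to iterate: traversing the corresponding cycle in $\mathcal{G}_n(\EnsCal{E})$ a suitable number of times (at most $t$) forces the accumulated mod-$t$ offset to vanish, producing a valid cycle of bounded length in $\mathcal{G}_{nt}(\EnsCal{E}')$ and hence the required $y$-relation. Every other step of the proof is a direct formal computation with the key identity.
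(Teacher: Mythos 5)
Your key identity $a^{n\ENTIERS{t}}\SOM{Y} \Modn{nt} a^{n\ENTIERS{t}}\SOM{X}a^{n\ENTIERS{t}}$ and the reduction modulo $n$ are exactly the right tools, and your treatment of the border of a single cbc is sound in both directions. The genuine gap is in the compatibility part, and you have located it yourself: lifting a relation witnessing non-compatibility of $\EnsCal{E}$ to one for $\EnsCal{E'}$. The ``traverse the cycle at most $t$ times'' idea is not a proof: the mod-$t$ constraints to be solved involve the arbitrary offsets $k_{\ell,m}$ from the definition of $H_t$, one for each chosen column $m_i$, and concatenating the relation with itself does not make these constraints telescope into a single accumulated offset that vanishes after $t$ rounds. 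The projection direction is also softer than you claim: distinct elements of $Y$ (same base element $\ell$, different columns) project to the same element of $X$, so two distinct $y$-sequences whose first letters have different right exponents can project to identical, or at least not validly distinguished, $x$-sequences; your assertion that distinct $y$-sequences must project to distinct $x$-sequences is not justified and is false in general.

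Both problems disappear if you push your own power-series formulation one step further, which is what the paper does. Compatibility of $\EnsCal{E}$ together with the bordering of $\EnsCal{E}^\circ$ by $\PAR{P,Q}$ is equivalent to the family of identities
\[
a^{P}\,\SOM{X_1}\cdots\SOM{X_k}\,a^{Q} \;\Modn{n}\; \PAR{a^{\ENTIERS{n}}b}^k a^{\ENTIERS{n}},
\qquad k \geq 1,\ X_1,\dots,X_k \in \EnsCal{E},
\]
because unambiguity of the left-hand side simultaneously encodes that every composition $X_1 \circ_{r_1}\cdots\circ_{r_{k-1}} X_k$ is a set of cardinality $n$, that it is bordered by $\PAR{P,Q}$, and (varying $k$) that no code relation exists. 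Your computation for $k=1$ extends verbatim to arbitrary $k$: insert $a^{n\ENTIERS{t}}$ between consecutive factors via the key identity to pass from the $Y_i$ to the $X_i$, and, for the converse, reduce modulo $n$ and cancel $t^{k+1}$ from both sides. No relation ever needs to be lifted or projected, and the two directions become symmetric algebraic manipulations.
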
    
     
	\begin{proof}
		Let~$\EnsCal{E}$ be a compatible set of~$n$-cbc whose stable is bordered by~$\PAR{P,Q}$ 
		and~$\EnsCal{E'} \in \EnsCal{H}_{t}\PAR{\EnsCal{E}}$.

		For all~$Y_1, \dots, Y_k \in \EnsCal{E'}$, there 
		exists~$X_1, \dots, X_k \in \EnsCal{E}$ such that~$Y_i \in H_t\PAR{X_i}$, 
		when~$i \in [1,k]$.
		Since 
   		\begin{equation*}
   			a^{n\ENTIERS{t}} \SOM{Y_i} 
   			\Modn{nt} a^{n\ENTIERS{t}} \SOM{X_i} a^{n\ENTIERS{t}} 
   		\end{equation*}
   		 for all~$i \in [1,k]$, we have that
		\begin{equation} \label{conversion_nt_n_1}
			a^{P+n\ENTIERS{t}}\SOM{Y_1} \cdots \SOM{Y_k} a^{Q} \Modn{nt} 
			a^{P+n\ENTIERS{t}} \PAR{\SOM{X_1} a^{n\ENTIERS{t}}} 
   			\cdots \PAR{\SOM{X_k} a^{n\ENTIERS{t}}} a^{Q}.
   		\end{equation}	
   		
   		Moreover, according to the hypothesis,
		\begin{equation}  \label{conversion_nt_n_4}
			a^{P} \SOM{X_1} \cdots \SOM{X_k} a^{Q}
   			\Modn{n} 
    		\PAR{a^{\ENTIERS{n}}b}^k a^{\ENTIERS{n}}
   		\end{equation}	
   		and since for all~$i,j$,
    	\begin{equation}  \label{conversion_nt_n_2} 		
    		a^i \Modn{n} a^j \iff a^{i+n\ENTIERS{t}} \Modn{nt} a^{j+n\ENTIERS{t}},
		\end{equation} 
    	we have that
   		\begin{equation} \label{conversion_nt_n_5} 
    		a^{P+n\ENTIERS{t}} \PAR{\SOM{X_1} a^{n\ENTIERS{t}}} 
   			\cdots \PAR{\SOM{X_k} a^{n\ENTIERS{t}}} a^{Q}
    		\Modn{nt}
    		 \PAR{a^{\ENTIERS{n} + n\ENTIERS{t}}b}^k a^{\ENTIERS{n} + n\ENTIERS{t}}
    		\Modn{nt}
    		\PAR{a^{\ENTIERS{nt}}b}^k a^{\ENTIERS{nt}}
    	\end{equation}
    	and thus 
    	\begin{equation}  \label{conversion_nt_n_3}
    		a^{P+n\ENTIERS{t}}\SOM{Y_1} \cdots \SOM{Y_k} a^{Q} 
    		\Modn{nt}
    		\PAR{a^{\ENTIERS{nt}}b}^k a^{\ENTIERS{nt}}.
    	\end{equation}
    	This shows that~$\EnsCal{E'}$ is a compatible set of~$nt$-cbc whose stable is bordered 
    	by~$\PAR{P+n\ENTIERS{t},Q}$. 
   		
   		Conversely, let~$\EnsCal{E'}$ be a compatible set of~$nt$-cbc
   		whose stable is bordered by~$(P+n\ENTIERS{t},Q)$ and such 
   		that~$\EnsCal{E'} \in \EnsCal{H}_{t}\PAR{\EnsCal{E}}$.
   		For all~$X_1, \dots, X_k \in \EnsCal{E}$, there 
		exists~$Y_1, \dots, Y_k \in \EnsCal{E'}$ such that~$Y_i \in H_t\PAR{X_i}$, 
		when~$i \in [1,k]$.
		We have by hypothesis~\eqref{conversion_nt_n_3} and~\eqref{conversion_nt_n_1} 
		thus~\eqref{conversion_nt_n_5}.
		We apply~\eqref{conversion_nt_n_2} to~\eqref{conversion_nt_n_5} in order 
		to get~\eqref{conversion_nt_n_4}.
		
    	This concludes the proof.
    \end{proof}          
	
    We introduce a periodic notion for cbc and compatible sets.    
    
    \begin{definition} 
   		We say that~$Y$ is~$n$-\DEF{right-periodic} if there exists 
   		an~$n$-cbc~$X$ and~$t > 1$, such that~$Y \in H_{t}\PAR{X}$ and 
   		we say that~$Y$ is~$n$-\DEF{periodic} if~$Y$ or~$\DUAL{Y}$ 
   		is~$n$-\DEF{right-periodic}. 
   		
   		More generally, we say that a compatible set 
   		is~$n$-\DEF{right-periodic} if all its elements are~$n$-right-periodic. 
    \end{definition}
   
    For example, the cbc~\eqref{exemple:cbc} is~$4$-periodic since 
    \begin{equation} \label{exemple:hajos_cbc_1}
    	\ENS{b, ab, a^2ba, a^3ba^3}
    \end{equation}
	is an~$4$-cbc and~\eqref{exemple:hajos_cbc_2}.
    	
	\begin{remark} \label{remarque:hajos_modulo}
		Note that if~$Y$ is an~$n$-\DEF{right-periodic}~$nt$-cbc 
		then~$Y \in H_{t}\PAR{\MODn{Y}{n}}$, where 
		\begin{equation*}
		\MODn{Y}{n} = \ENS{a^{\MODn{i}{n}} b a^{\MODn{j}{n}}, \, a^i b a^j \in Y }.
		\end{equation*}
	\end{remark}
    	
	The next proposition links periodicity of factorizations to periodicity of compatible sets.
	
	\begin{proposition} \label{prop:critere_periodique}
		Let~$\EnsCal{E}$ be a compatible set of~$nt$-cbc such that~$\EnsCal{E}^\circ$
		is bordered by~$\PAR{P+n\ENTIERS{t},Q}$.
		If for all~${k \in \ENTIERS{nt}}$ and~$Y \in \EnsCal{E}$,
		the sets
		\begin{equation} \label{prop:critere_periodique_eq}
			\R{k}{nt}{a^{P} Y}
		\end{equation}			
		are~$n$-periodic in~$\Zn{nt}$ then~$\EnsCal{E}$ is~$n$-right-periodic.
	\end{proposition}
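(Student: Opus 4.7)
Let $Y \in \EnsCal{E}$. I will show that $Y \in H_t(X)$ with $X := \MODn{Y}{n}$, the natural candidate suggested by Remark~\ref{remarque:hajos_modulo}; as $Y \in \EnsCal{E}$ is arbitrary, the proposition follows. Setting $Y_J := \ENS{I : a^{I} b a^{J} \in Y}$, the same manipulation used in the forward direction of the proof of Proposition~\ref{proposition_cbc_period} shows that $Y \in H_t(X)$ is equivalent to the identity
\begin{equation*}
    a^{n\ENTIERS{t}}\, \SOM{Y} \;\Modn{nt}\; a^{n\ENTIERS{t}}\, \SOM{X}\, a^{n\ENTIERS{t}},
\end{equation*}
which, upon comparing the coefficient of $a^{I'} b a^{J'}$, translates into the ``coefficient-wise'' condition $\CARD{\ENS{I \in Y_{J'} : I \equiv i \pmod{n}}} = \mathbf{1}_{(i,\, \MODn{J'}{n}) \in X}$ required for every $(J', i) \in \ENTIERS{nt} \times \ENTIERS{n}$.

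Unpacking $R_k^{nt}(a^P Y) = \ENS{J : k \in P + Y_J \text{ in } \Zn{nt}}$, the $n$-periodicity hypothesis is equivalent to the set equality $P + Y_J = P + Y_{J+n}$ in $\Zn{nt}$ for every $J$. Writing the border condition in the group algebra $\mathbb{Z}[\Zn{nt}]$ and factoring $a^{n\ENTIERS{t}}$ out (its annihilator is $(a^n - 1)\mathbb{Z}[\Zn{nt}]$) reduces it modulo $a^n - 1$ to an identity in $\mathbb{Z}[\Zn{n}]$ of the form $a^P \cdot F_J = a^{\ENTIERS{n}}$, where $F_J$ is a non-negative-integer-coefficient polynomial. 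A short book-keeping argument on the factorization $(P + n\ENTIERS{t}, Q)$ of $\Zn{nt}$ moreover shows that $(P, Q)$ is itself a factorization of $\Zn{n}$.

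Non-negativity of the coefficients in $a^P F_J = a^{\ENTIERS{n}}$ forces $\CARD{\ENS{I \in Y_{J'} : I \equiv i \pmod{n}}} \in \ENS{0,1}$ for every $(J', i)$; moreover, setting $S_{J'} := \ENS{i : \exists I \in Y_{J'},\, I \equiv i \pmod{n}}$, the polynomial $F_J$ turns out to be the indicator of a factorization complement $\Sigma_J$ of $P$ in $\Zn{n}$, with the disjoint decomposition $\Sigma_J = \bigsqcup_{q \in Q} S_{J-q}$. Reducing the set equality from the periodicity modulo $n$ gives $P + S_J = P + S_{J+n}$ in $\Zn{n}$. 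Combining this with the factorization structure of $(P, \Sigma_J)$, and using the periodicity at every shift $J - q$ simultaneously, pins down $\Sigma_J = \Sigma_{J+n}$; uniqueness of the $(P, \Sigma_J)$-decomposition then extracts $S_J = S_{J+n}$. Iterating, $S_{J'}$ depends only on $\MODn{J'}{n}$ and matches $\ENS{i : (i, \MODn{J'}{n}) \in X}$ by the definition of $X$. The coefficient-wise condition therefore holds, so $Y \in H_t(X)$; a count gives $\CARD{X} = n$, and the converse direction of Proposition~\ref{proposition_cbc_period}, applied to $\ENS{\MODn{Y}{n} : Y \in \EnsCal{E}}$, certifies that $\ENS{a^n} \cup X$ is a code.

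The delicate part of the argument is the step $\Sigma_J = \Sigma_{J+n}$: the hypothesis provides only the set-level equality $P + Y_J = P + Y_{J+n}$, whereas we need the analogous set-equality of the $\Sigma_J$'s inside $\Zn{n}$ before factorization-uniqueness can be invoked to extract $S_J = S_{J+n}$. Closing this gap requires exploiting the periodicity at every shift $J - q$ ($q \in Q$) simultaneously, together with the rigidity supplied by the factorization structure of the border, rather than relying on any single instance of the periodicity hypothesis.
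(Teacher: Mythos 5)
You have correctly identified the target ($Y \in H_t\PAR{\MODn{Y}{n}}$ via the coefficient-wise condition), and your preliminary reductions are sound: the periodicity hypothesis is indeed equivalent to $P + Y_J = P + Y_{J+n}$ in $\Zn{nt}$, and the border does force reduction modulo~$n$ to be injective on each $Y_J$, together with the disjoint decomposition $\Sigma_J = \bigsqcup_{q \in Q} S_{J-q}$ and the factorization $\PAR{\MODn{P}{n}, \Sigma_J}$. But the step you yourself flag as delicate, $S_J = S_{J+n}$, is a genuine gap, and the route you sketch cannot close it, because you reduce the periodicity hypothesis modulo~$n$ too early. What you are left with --- $\MODn{P}{n} + S_J = \MODn{P}{n} + S_{J+n}$ in $\Zn{n}$, both sums direct, both $S$'s contained in factorization complements of $\MODn{P}{n}$ --- does not imply $S_J = S_{J+n}$: in $\Zn{4}$ take $\MODn{P}{n} = \ENS{0,2}$, $S_J = \ENS{1}$, $S_{J+n} = \ENS{3}$; then $\ENS{0,2}+\ENS{1} = \ENS{1,3} = \ENS{0,2}+\ENS{3}$, both sums are direct, and both singletons extend to factorization complements ($\ENS{0,1}$ and $\ENS{2,3}$), yet $S_J \neq S_{J+n}$. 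Invoking ``the periodicity at every shift $J-q$ simultaneously'' does not help, since each instance has already been reduced modulo~$n$ and carries the same ambiguity.

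The information you need lives in $\Zn{nt}$, not in $\Zn{n}$: in the example above, lifted to $\Zn{8}$ with $P = \ENS{0,2}$, the actual hypothesis $P + Y_J = P + Y_{J+4}$ reads $\ENS{y, y+2} = \ENS{y', y'+2}$, which forces $y = y'$ (the alternative $y' = y+2$ and $y'+2 = y$ would give $y+4 = y$ in $\Zn{8}$). The paper's proof accordingly never leaves $\Zn{nt}$: it uses the unambiguity of $a^{P+n\ENTIERS{t}}\SOM{Y}$ (a consequence of the border) to exclude two elements $a^{i+k_1n}ba^{j}, a^{i+k_2n}ba^{j} \in Y$ with $k_1 \neq k_2$ --- the two decompositions $a^{p+k_2n}a^{i+k_1n}ba^{j}$ and $a^{p+k_1n}a^{i+k_2n}ba^{j}$ of the same word would be ambiguous --- and combines this with the $n$-periodicity of the sets $\R{k}{nt}{a^{P}Y}$ to obtain the unambiguous congruence $a^{P+n\ENTIERS{t}}Y \Modn{nt} a^{P+n\ENTIERS{t}}\MODn{Y}{n}a^{n\ENTIERS{t}}$, from which $\CARD{\MODn{Y}{n}} = n$ and the $H_t$-structure are read off; Proposition~\ref{proposition_cbc_period} then certifies that $\MODn{Y}{n}$ is an $n$-cbc, as in your last step. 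To repair your argument you would have to run the $\Sigma$-analysis on the sets $P + n\ENTIERS{t} + Y_J$ inside $\Zn{nt}$ before ever passing to $\Zn{n}$.
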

	
	\begin{proof}
		For all~$Y \in \EnsCal{E}$, if the sets~\eqref{prop:critere_periodique_eq} 
		are~$n$-periodic then
   		\begin{equation} \label{local:1}
			a^{P+n\ENTIERS{t}}Y \Modn{nt} 
   			a^{P+n\ENTIERS{t}} \MODn{Y}{n} a^{n\ENTIERS{t}}
   		\end{equation}
   		is unambiguous and thus~$\CARD{\MODn{Y}{n}} = n$. Moreover,  
		if~$a^{i + k_1 n}ba^{j}, a^{i + k_2 n}ba^{j} \in Y$, 
		where~$i < n$ and~$k_1,k_2 < t$, then for any~$p \in P$,
		\begin{equation*}
			a^{p+k_2 n} a^{i + k_1 n}ba^{j} 
			\Modn{nt} 
			a^{p+k_1 n} a^{i + k_2 n}ba^{j} 
		\end{equation*}
		and since~\eqref{local:1} is ambiguous it implies that~$k_1 = k_2$.
		Thus if~$a^iba^j \in \MODn{Y}{n}$ then there exists~$k_1, \dots k_t \in \ENTIERS{t}$
		such that
		\begin{equation*}
			\ENS{a^{i + k_1 n}ba^{j}, \dots, a^{i + k_t n}ba^{j+(t-1)n}} \subseteq Y.
		\end{equation*}   
		Therefore~$Y \in H_t\PAR{\MODn{Y}{n}}$. 
		
		Moreover, according to Proposition~\ref{proposition_cbc_period},~$\MODn{Y}{n}$ 
		is an~$n$-cbc. So~$Y$ (and thus~$\EnsCal{E}$) is~$n$-right-periodic.
    	This concludes the proof.
	\end{proof}

	We define a \textit{Haj\'{o}s} notion for cbc as composition of periodic cbc. Formally, 	
    we denote by~$H_n$ the set of \DEF{Haj\'{o}s cbc} of size~$n$ that we recursively 
    define as follows:
    \begin{equation*}
    H_n := \left\{\begin{array}{cl}
    	\vspace{2pt} \ENS{\ENS{b}} & \text{if } n = 1,\\
      \bigcup \limits_{\substack{n = tm, \, t > 1,\vspace{2pt} \\ X \in H_m}} 
      \DUAL{H_t\PAR{X}} \cup H_t\PAR{X}
       & \text{otherwise}.
    \end{array}\right.
    \end{equation*}	
    Since~$\ENS{b}$ is an~$1$-cbc then, according to Proposition~\ref{proposition_cbc_period}, 
    a Haj\'{o}s cbc is a cbc.
    For example, the cbc~\eqref{exemple:cbc} is a Haj\'{o}s cbc since~\eqref{exemple:hajos_cbc_2}
	and the dual of~\eqref{exemple:hajos_cbc_1} is equal to
	\begin{equation*}
		\ENS{a^{0+0 \times 1}ba^{0},a^{0+0 \times 1}ba^{0+1 \times 1},
			 a^{0+1 \times 1}ba^{0+2 \times 1},a^{0+3 \times 1}ba^{0+3 \times 1}} 
			 \in H_4\PAR{\ENS{b}}.
	\end{equation*}

	We extend the Haj\'{o}s notion to compatible sets.
	A compatible set~$\EnsCal{E}$ is said to be of \DEF{Haj\'{o}s} if and only if
	there exists~$t_1, \dots, t_k > 1$ and some cbc~$Y_1, \dots, Y_{k-1}$
	 such that for all~$Y \in \EnsCal{E}$ or 
	for all~$Y \in \DUAL{\EnsCal{E}}$,
	\begin{equation*}
		Y \in H_{t_k}\PAR{Y_{k-1}},\,
		\DUAL{Y_{k-1}} \in H_{t_{k-1}}\PAR{Y_{k-2}},\,
		\dots,
		\DUAL{Y_{2}}    \in H_{t_2}\PAR{Y_1},\,
		\DUAL{Y_1} \in H_{t_1}\PAR{\ENS{b}}.
	\end{equation*}
    Note that, according to Remark~\ref{remarque:hajos_modulo},
    we necessarily have~$Y_i = \MODn{Y}{t_{1}\cdots t_{i}}$, for all~$1 \leq i < k$.
    In particular,~$\EnsCal{E}$ is of Haj\'{o}s if and only if~$\DUAL{\EnsCal{E}}$ 
    is of Haj\'{o}s.
	
	\subsection{Krasner border}
	
	We first do some recalls about \textit{Krasner factorizations}. 
	An ordered pair~$\PAR{P,Q}$ is a \DEF{Krasner factorization} 
	(of size~$n := \CARD{P} \times \CARD{Q}$)  
	if and only if for	all~$k \in \ENTIERS{n}$,
	there exists~$p \in P$ and~$q \in Q$ such that 
	\begin{equation*}
		k = p + q.
	\end{equation*}  
	For example, the ordered pair~\eqref{exemple:bord_Krasner} 
	is a Krasner factorization of size~$4$.
	
	Krasner factorizations are completely described in~\cite{krasner1937propriete}. 
	For all~$t_1, \dots, t_k > 1$, the ordered pairs~$\PAR{U,V}$ and~$\PAR{V,U}$, where
	\begin{equation} \label{Enum_Krasner}
		U := \sum \limits_{i \in [1,k], \, 2 \mid i} t_1 \dots t_{i-1} \ENTIERS{t_i}
		\text{ and }
		V := \sum \limits_{i \in [1,k], \, 2 \nmid i} t_1 \dots t_{i-1} \ENTIERS{t_i},
	\end{equation}
	are Krasner factorizations of size~$t_1 \cdots t_k$.
	Conversely, any Krasner factorization can be built that way.
	
	Krasner factorizations naturally appear in the factorization conjecture as shown in~\cite{Felice}
	and in Proposition~3.6 of~\cite{Felice2022}.
	
	\begin{proposition} \label{prop:conj_facto_bord_krasner}
	If a finite maximal code~$M$ satisfies the 
    factorization conjecture then
    the set~$\EnsCal{C}_M$ is bordered by a Krasner factorization.
	\end{proposition}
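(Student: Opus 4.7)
The plan is to extract the Krasner factorization by restricting the factorization identity $\SOM{M} - 1 = \SOM{P}\PAR{\SOM{\Alpha}-1}\SOM{S}$ to words in $a^*$, and then to verify the border condition on each $C_M(\omega)$ by reading off coefficients of the equivalent identity $\SOM{\Alpha^*} = \SOM{P}\SOM{M^*}\SOM{S}$ on words of the form $a^i\omega a^j$ with $i$ and $j$ very large.

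First I apply the algebra homomorphism $\phi \colon \mathbb{Z}\langle \Alpha \rangle \to \mathbb{Z}[a]$ that fixes $a$ and sends every other letter to $0$. Because $M$ is maximal we have $M \cap a^* = \ENS{a^n}$, so with $P_a := P \cap a^*$ and $S_a := S \cap a^*$ the factorization identity collapses to $a^n - 1 = \SOM{P_a}(a-1)\SOM{S_a}$; cancelling $a-1$ in the integral domain $\mathbb{Z}[a]$ yields $1 + a + \cdots + a^{n-1} = \SOM{P_a}\SOM{S_a}$. Setting $P' := \ENS{k : a^k \in P_a}$ and $S' := \ENS{k : a^k \in S_a}$, this polynomial identity expresses that every $k \in \entiers$ has a unique decomposition $k = p' + s'$ in $\mathbb{Z}$ with $(p',s') \in P' \times S'$, and each such sum lies in $\entiers$. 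Hence $\CARD{P'} \cdot \CARD{S'} = n$ and $(P', S')$ is a factorization of $\Z$, in particular a Krasner factorization.

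Next, fix $\omega \in \Alpha^*$ and set $L := \max\ENS{|p| : p \in P}$ and $L' := \max\ENS{|s| : s \in S}$. For any $i \geq L$ and $j \geq L'$ and any decomposition $a^i \omega a^j = p \cdot m \cdot s$ with $p \in P$, $m \in M^*$, $s \in S$, the prefix $p$ has length at most $L \leq i$ and is a prefix of $a^i \omega a^j$, so it sits inside the initial $a^i$-block and consists only of $a$'s, forcing $p \in P_a$; symmetrically $s \in S_a$. Since the coefficient of $a^i\omega a^j$ in $\SOM{\Alpha^*}$ equals $1$ and also equals the number of such decompositions, there is a \emph{unique} pair $(p', s') \in P' \times S'$ with $a^{i-p'}\omega a^{j-s'} \in M^*$.

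To finish, given any $(i_0, j_0) \in \entiers \times \entiers$, choose $i \equiv i_0$ and $j \equiv j_0 \pmod{n}$ with $i \geq L$, $j \geq L'$; reducing the uniqueness above modulo $n$ produces exactly one $(p', s') \in P' \times S'$ such that $a^{\MOD{i_0 - p'}} b a^{\MOD{j_0 - s'}} \in C_M(\omega)$. This is precisely the statement that the coefficient of $a^{i_0} b a^{j_0}$ in $a^{P'} \SOM{C_M(\omega)} a^{S'}$ equals $1$ in the quotient by $a^n = \varepsilon$, so $(P', S')$ borders $C_M(\omega)$. Since $\omega$ was arbitrary, $(P', S')$ borders $\EnsCal{C}_M$. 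The main obstacle I foresee is the a priori combinatorial variety of ways $p$ and $s$ can overlap the non-$a$ pattern $\omega$; I circumvent this by lifting each $(i_0, j_0)$ to a large enough representative that only the degenerate $p, s \in a^*$ decompositions can occur, after which the border property reduces to the Krasner identity extracted in step one.
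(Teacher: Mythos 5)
Your proposal follows essentially the same strategy as the paper's proof: extract the Krasner factorization from the restriction of the factorization identity to $a^*$, then use the unambiguity of $\SOM{P}\,\SOM{M}^*\,\SOM{S}=\SOM{\Alpha}^*$ on words $a^i\omega a^j$ to transfer the border property to each $C_M(\omega)$. Your first two steps are correct, and the length argument forcing $p,s\in a^*$ once $i\geq L$, $j\geq L'$ is a clean way of making explicit what the paper handles by inserting powers of $a^n$ into a single ambiguous product. The one step that needs tightening is the final ``reducing the uniqueness above modulo $n$''. Membership $a^{\MOD{i_0-p'}}ba^{\MOD{j_0-s'}}\in C_M(\omega)$ only asserts the existence of \emph{some} $a^{i'}\omega a^{j'}\in M^*$ with $i'\Mod i_0-p'$ and $j'\Mod j_0-s'$; that witness need not satisfy $i'\leq i-p'$ and $j'\leq j-s'$, so it need not descend to your chosen level $(i,j)$. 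Consequently your level-$(i,j)$ uniqueness yields \emph{existence} of a suitable pair $(p',s')$, but a second pair could a priori be witnessed only at a higher level and escape the count, and nothing in the condition $i\geq L$, $j\geq L'$ rules this out.

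The repair is immediate and in the spirit of what you already wrote: $C_M(\omega)\subseteq a^{\entiers}ba^{\entiers}$ is finite, so fix one witness in $M^*$ for each of its elements and choose $i,j$ (still congruent to $i_0,j_0$) exceeding $L,L'$ \emph{and} the lengths of all these witnesses; since $a^n\in M$, each witness can then be padded by powers of $a^n$ up to level $(i-p',j-s')$, so the set of pairs counted at level $(i,j)$ coincides with the set of pairs realized in $C_M(\omega)$, and uniqueness transfers. With that sentence added the argument is complete. For comparison, the paper's own proof argues only the ambiguity direction --- two triples covering the same $a^{i_0}ba^{j_0}$ lift to two distinct factorizations of a common word, contradicting $\SOM{P}\,\SOM{M}^*\,\SOM{S}=\SOM{\Alpha}^*$ --- and leaves the existence direction to the count $\CARD{P_0}\cdot\CARD{C_M(\omega)}\cdot\CARD{S_0}=n^2$; your version proves both directions directly.
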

	
	Our statement is slightly different, we provide a straightforward proof. 	
	
	\begin{proof}
	If a finite maximal code~$M$ satisfies the factorization conjecture then 
    there exists~$P,S \subseteq \Alpha^*$ such that
    \begin{equation} \label{conjecture:krasner_factorisation}
    	\SOM{P} \, \SOM{M}^*\, \SOM{S} = \SOM{\Alpha}^*.
    \end{equation}
    The restriction of~\eqref{conjecture:krasner_factorisation} to words without letter~$b$, 
    implies that there exists~$P_0 \subseteq P$ and~$S_0 \subseteq S$ such that~$\PAR{P_0, S_0}$ 
    is a Krasner factorization of size~$n$, where~$a^n \in M$.
    If~$\PAR{P_0, S_0}$ does not borders~$\EnsCal{C}_M$ then
    there exists~$\omega \in \Alpha^*$,~$
    a^{i_1 + n k_1}\omega a^{n \ell_1 + j_1}, 
    a^{i_2 + n k_2}\omega a^{n \ell_2 + j_2} \in M^*,
    p_1, p_2 \in P_0$, and~$s_1, s_2 \in S_0$ such that 
    \begin{equation} \label{conjecture:coef_somme}
    	a^{p_1} \PAR{a^n}^{k_2} 
    	a^{i_1 + n k_1}\omega a^{n \ell_1 + j_1} 
    	\PAR{a^n}^{\ell_2} a^{s_1}
    	=
    	a^{p_2} \PAR{a^n}^{k_1}
    	a^{i_2 + n k_2}\omega a^{n \ell_2 + j_2} 
    	\PAR{a^n}^{\ell_1} a^{s_2},
    \end{equation}
    where~$i_1, i_2, j_1, j_2 \in \entiers$.
    Thus the coefficient of~\eqref{conjecture:coef_somme}
    in~$\SOM{P_0} \, \SOM{M}^* \SOM{S_0}$ is greater or equal to~$2$. 
    Which contradicts the factorization conjecture.
	\end{proof}
    
    According to Theorem~3.2 of~\cite{de1999hajos}, a factorization~$(P,Q)$ is of 
    Haj\'{o}s if and only if there exists a Krasner factorization~$(U,V)$ such 
    that~$(U,Q)$ and~$(P,V)$ are factorizations.    
    Our next Theorem shows a equivalent result for compatible sets.
	We will use the following Lemma according to the proof of 
	Theorem~4.13 of~\cite{szabo2009factoring}.
	
	\begin{lemma} \label{Lemma_krasner_factorization}
    Let~$(U,V)$ be a Krasner factorization of size~$n$.
    If~$U$ is an~$m$-periodic set then for any factorization~$(P,V)$ of size~$n$, the set~$P$
    is~$m$-periodic. 
    \end{lemma}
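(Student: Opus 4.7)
My plan is to transport everything into the group algebra $R := \mathbb{Q}[x]/(x^n-1)$, identifying each subset $A \subseteq \mathbb{Z}_n$ with the polynomial $\underline{A}(x) := \sum_{a \in A} x^a$. By the Chinese remainder theorem, $R \cong \prod_{d \mid n} \mathbb{Q}(\zeta_d)$, so an element of $R$ vanishes iff it vanishes at every $\zeta_d$, $d \mid n$. Under this dictionary, $(P,V)$ being a factorization of $\mathbb{Z}_n$ reads $\underline{P}\,\underline{V} = \underline{\mathbb{Z}_n}$ in $R$, and $P$ being $m$-periodic reads $\underline{P}(\zeta_d) = 0$ for every $d \mid n$ with $d \nmid m$ (equivalently, $(x^n-1)/(x^m-1)$ divides $\underline{P}$). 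This is what I aim to establish.

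The crucial extra input is Krasner-ness: the decomposition $U + V = \{0,1,\dots,n-1\}$ is exact in $\mathbb{Z}$ (not merely in $\mathbb{Z}_n$), so $\underline{U}(x)\,\underline{V}(x) = 1 + x + \cdots + x^{n-1} = \prod_{d \mid n,\, d > 1} \Phi_d(x)$ is a genuine identity in $\mathbb{Z}[x]$. Because each cyclotomic factor $\Phi_d$ appears on the right with multiplicity one, unique factorization in $\mathbb{Z}[x]$ yields the dichotomy: for each $d \mid n$ with $d > 1$, exactly one of $\Phi_d \mid \underline{U}$ and $\Phi_d \mid \underline{V}$ holds; equivalently, $\underline{U}(\zeta_d)$ and $\underline{V}(\zeta_d)$ cannot both vanish.

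Now fix $d \mid n$ with $d \nmid m$. The $m$-periodicity of $U$ gives $(\zeta_d^m - 1)\,\underline{U}(\zeta_d) = 0$ in the field $\mathbb{Q}(\zeta_d)$, and since $d \nmid m$ forces $\zeta_d^m \neq 1$, we deduce $\underline{U}(\zeta_d) = 0$. The dichotomy then forces $\underline{V}(\zeta_d) \neq 0$. Evaluating the factorization identity at $\zeta_d$ yields $\underline{P}(\zeta_d)\,\underline{V}(\zeta_d) = \underline{\mathbb{Z}_n}(\zeta_d) = 0$ (the last equality because $d > 1$), so the field property of $\mathbb{Q}(\zeta_d)$ gives $\underline{P}(\zeta_d) = 0$, as required.

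The only delicate step is the Krasner dichotomy in paragraph two: an arbitrary factorization only provides $\underline{U}\,\underline{V} = \underline{\mathbb{Z}_n}$ in the quotient $R$, where the $\Phi_d$ are zero-divisors, so one cannot separate the cyclotomic factors. Krasner-ness lifts the identity to $\mathbb{Z}[x]$, where squarefreeness of $1 + x + \cdots + x^{n-1}$ together with unique factorization powers the separation. The rest (CRT, translation of periodicity into vanishing at roots of unity) I expect to be routine.
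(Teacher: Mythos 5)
Your argument is correct and complete. The key chain checks out: Krasner-ness lifts $\SOM{U}(x)\,\SOM{V}(x)=1+x+\dots+x^{n-1}$ to a genuine identity in $\mathbb{Z}[x]$, squarefreeness of the right-hand side forces each $\Phi_d$ ($d\mid n$, $d>1$) to divide exactly one of $\SOM{U},\SOM{V}$, the $m$-periodicity of $U$ gives $\SOM{U}(\zeta_d)=0$ whenever $d\nmid m$, hence $\SOM{V}(\zeta_d)\neq 0$ there, and evaluating $\SOM{P}(\zeta_d)\,\SOM{V}(\zeta_d)=0$ in the field $\mathbb{Q}(\zeta_d)$ kills $\SOM{P}(\zeta_d)$; this vanishing pattern is exactly $m$-periodicity of $P$. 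Note that the paper offers nothing to compare against: the lemma is stated with a bare citation to the proof of Theorem~4.13 of Szab\'{o}--Sands, so your write-up is strictly more informative than what the paper provides. Your method is essentially the character-theoretic argument standard in the factorization literature, specialized to cyclic groups where characters become evaluations at roots of unity; the decisive observation that Krasner factorizations admit an integral (not merely modular) polynomial identity is precisely what makes the cyclotomic factors separable, and you correctly flag that this step fails for general factorizations. Two cosmetic points only: the parenthetical ``$(x^n-1)/(x^m-1)$ divides $\SOM{P}$'' is literally a polynomial divisibility only when $m\mid n$ (harmless, since a set is $m$-periodic iff it is $\gcd(m,n)$-periodic, and your main criterion ``$\SOM{P}(\zeta_d)=0$ for all $d\mid n$ with $d\nmid m$'' is stated correctly); and if one allows Krasner components with negative entries the identity lives in $\mathbb{Z}[x,x^{-1}]$, which is still a UFD with the cyclotomics as non-unit primes, so the dichotomy survives.
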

	
    \begin{theorem} \label{Theorem_carac_periodic}
    	A compatible set is of Haj\'{o}s if and only if its stable is bordered 
    	by a Krasner factorization.
    \end{theorem}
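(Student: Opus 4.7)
The plan is to prove both implications by induction on the number $k$ of parameters $t_1,\dots,t_k$ appearing in the Haj\'{o}s chain, equivalently on the depth of the Krasner factorization written in the normal form~\eqref{Enum_Krasner}. The base case $k=0$ is immediate: $\EnsCal{E}=\ENS{\ENS{b}}$ is Haj\'{o}s and its stable is trivially bordered by the unique Krasner factorization $\PAR{\ENS{0},\ENS{0}}$ of size~$1$.

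For the direct implication at depth $k\ge 1$, I assume $\EnsCal{E}$ is of Haj\'{o}s with parameters $t_1,\dots,t_k$ and, after possibly replacing $\EnsCal{E}$ by $\DUAL{\EnsCal{E}}$, that each $Y\in\EnsCal{E}$ satisfies $Y\in H_{t_k}\PAR{\MODn{Y}{N}}$ with $N:=t_1\cdots t_{k-1}$. The collection $\EnsCal{E}_0:=\ENS{\MODn{Y}{N},\,Y\in\EnsCal{E}}$ inherits a Haj\'{o}s structure of depth $k-1$, so the inductive hypothesis yields a Krasner factorization $\PAR{U',V}$ of size $N$ bordering $\EnsCal{E}_0^\circ$. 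Since $\EnsCal{E}\in\EnsCal{H}_{t_k}\PAR{\EnsCal{E}_0}$, Proposition~\ref{proposition_cbc_period} transfers this to a border $\PAR{U'+N\ENTIERS{t_k},V}$ of $\EnsCal{E}^\circ$, and the description~\eqref{Enum_Krasner} shows that attaching the layer $N\ENTIERS{t_k}$ keeps the pair a Krasner factorization.

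For the converse, let $\PAR{U,V}$ be a Krasner factorization bordering $\EnsCal{E}^\circ$, written via~\eqref{Enum_Krasner} with parameters $t_1,\dots,t_k$. Swapping $U$ and $V$ if necessary---an operation dual to replacing $\EnsCal{E}$ by $\DUAL{\EnsCal{E}}$---I may assume $U=U'+N\ENTIERS{T}$ with $N=t_1\cdots t_{k-1}$, $T=t_k$, and $\PAR{U',V}$ a Krasner factorization of size $N$. A direct computation in $\Zn{n}$ shows that $U$ is $N$-periodic. Theorem~\ref{th:bordante} then provides, for every $k_1\in\entiers$ and every $Y\in\EnsCal{E}$, that $\PAR{\R{k_1}{n}{a^UY},V}$ is a factorization of size~$n$; Lemma~\ref{Lemma_krasner_factorization} forces $\R{k_1}{n}{a^UY}$ to be $N$-periodic in $\Zn{n}$, and Proposition~\ref{prop:critere_periodique} concludes that $\EnsCal{E}$ is $N$-right-periodic. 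Consequently $\EnsCal{E}\in\EnsCal{H}_T\PAR{\EnsCal{E}_0}$ for $\EnsCal{E}_0:=\ENS{\MODn{Y}{N},\,Y\in\EnsCal{E}}$, and the converse direction of Proposition~\ref{proposition_cbc_period} supplies that $\EnsCal{E}_0$ is a compatible set of $N$-cbc whose stable is bordered by the Krasner factorization $\PAR{U',V}$. Applying the inductive hypothesis to $\EnsCal{E}_0$ makes it Haj\'{o}s, and splicing this depth-$(k-1)$ chain with the relations $Y\in H_T\PAR{\MODn{Y}{N}}$ yields the Haj\'{o}s chain of length $k$ required for $\EnsCal{E}$.

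The main delicate point I anticipate is the canonical alignment: both Propositions~\ref{proposition_cbc_period} and~\ref{prop:critere_periodique} require the border to have the asymmetric shape $\PAR{P+n\ENTIERS{t},Q}$, whereas~\eqref{Enum_Krasner} only guarantees that \emph{one} of the two components of a Krasner factorization has such a shape. One therefore has to match the ``periodic'' side of the Krasner factorization with the correct side of the border, which is precisely where the swap between $U$ and $V$---reflecting the symmetric role of $\DUAL{\EnsCal{E}}$ in the definition of Haj\'{o}s compatible sets---is used; once this orientation is fixed, the two inductive arguments are dual to each other via the equivalence in Proposition~\ref{proposition_cbc_period}.
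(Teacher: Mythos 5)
Your proposal is correct and follows essentially the same route as the paper: both directions are handled by peeling off the top layer $N\ENTIERS{t_k}$ of the Krasner factorization, using Theorem~\ref{th:bordante} together with Lemma~\ref{Lemma_krasner_factorization} to get $N$-periodicity of the sets $\R{k_1}{n}{a^U Y}$, then Proposition~\ref{prop:critere_periodique} and the equivalence of Proposition~\ref{proposition_cbc_period} to descend, with the dual $\DUAL{\EnsCal{E}}$ absorbing the choice of which component of the Krasner factorization carries the periodic layer. The only cosmetic difference is that you induct on the depth $k$ of the chain rather than on $n$, which is immaterial.
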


    \begin{proof}
    	We prove by recurrence on~$n$ that any compatible set of~$n$-cbc
    	whose stable is bordered by a Krasner factorization is of Haj\'{o}s. 
    	First, note that the unique (non-empty) compatible set of~$1$-cbc is~$\ENS{\ENS{b}}$ 
    	and that it is of Haj\'{o}s.
    	
    	Assume now that any compatible set of~$j$-cbc (where~$j < n$) whose stable is 
    	bordered by 
    	a Krasner factorization 
    	is of Haj\'{o}s.
    	Let~$\EnsCal{E}$ be a compatible set of~$n$-cbc whose stable is bordered by a 
    	Krasner factorization~$(U,V)$.
    	We can assume that~$n = t_1 \cdots t_k$, where~$t_i > 1$ (for~$i \in [1,k]$),
    	and that~$(U,V)$ is equal to~\eqref{Enum_Krasner} (otherwise, we can 
    	consider~$\DUAL{\EnsCal{E}}$ instead of~$\EnsCal{E}$).
    	
    	If~$k$ is even (respectively odd) then~$U$ (resp.~$V$) is~$t_1 \cdots t_{k-1}$-periodic
    	 and for any~$X \in \EnsCal{E}$ and~$\ell \in \entiers$,
    	\begin{equation*}
    		\PAR{\R{\ell}{n}{a^U X}, V} \hspace{10pt} 
    		\PAR{\text{resp. } \PAR{U, \L{\ell}{n}{ X a^V}}}
		\end{equation*}
		is a factorization.
    	Moreover according to 
    	Lemma~\ref{Lemma_krasner_factorization}, we know that
    	\begin{equation*}
    		\R{\ell}{n}{a^U X} \hspace{10pt} 
    		\PAR{\text{resp. } \L{\ell}{n}{ X a^V}}
    	\end{equation*}
    	 is also~$t_1 \cdots t_{k-1}$-periodic.
    	
    	Thus according to 
    	Proposition~\ref{prop:critere_periodique},~$\EnsCal{E} \in \EnsCal{H}_{t_k}\PAR{\EnsCal{E'}}$ 
    	(resp.~$\DUAL{\EnsCal{E}} \in \EnsCal{H}_{t_k}\PAR{\EnsCal{E'}}$), 
    	where~$\EnsCal{E'}$ is 
    	a compatible set of~$t_1 \cdots t_{k-1}$-cbc whose stable is bordered by 
    	the Krasner factorization~$(U,V)$, where~$k$ is decremented (i.e.~$k \leftarrow k-1$).
    	Thanks to the recurrence hypothesis,~$\EnsCal{E'}$ is of Haj\'{o}s thus~$\EnsCal{E}$ is also 
    	of Haj\'{o}s. 
    	
    	The converse is a straight forward recurrence. Indeed,~$\ENS{\ENS{b}}$ is bordered by the 
    	Krasner factorization~$(\ENS{0}, \ENS{0})$ and, according to 
    	Proposition~\ref{proposition_cbc_period}, 
    	if~$\EnsCal{E}$ is a compatible set 
    	of~$n$-cbc whose stable is bordered by a Krasner factorization~$(U,V)$
    	then~$\EnsCal{E}' \in \EnsCal{H}_{t}\PAR{\EnsCal{E}}$
    	(resp.~$\DUAL{\EnsCal{E}'} \in \EnsCal{H}_{t}\PAR{\DUAL{\EnsCal{E}}}$)
    	is bordered by the Krasner 
    	factorization~$(U+n\ENTIERS{t},V)$ (resp.~$(U, V+n\ENTIERS{t})$).
    \end{proof}

    According to Theorem~\ref{Theorem_carac_periodic} and its constructive proof, we know that given a 
    stable set~$\EnsCal{S}$ (such as~$\EnsCal{C}_M$, when~$M$ is a 
    code that satisfies the factorization conjecture) 
    bordered by a Krasner factorization~$(U,V)$ (we can suppose that it is 
    equal to~\eqref{Enum_Krasner} and that~$k$ is even, the others cases are similar), we have
	\begin{equation*}
		Y \in H_{t_k}\PAR{Y_{k-1}},\,
		\DUAL{Y_{k-1}} \in H_{t_{k-1}}\PAR{Y_{k-2}},\,
		\dots,
		\DUAL{Y_{2}}    \in H_{t_2}\PAR{Y_1},\,
		\DUAL{Y_1} \in H_{t_1}\PAR{\ENS{b}},
	\end{equation*}
    for all~$Y \in \EnsCal{S}$, where~$Y_i = \MODn{Y}{t_{1}\cdots t_{i}}$.
    
    \section{Cbc Haj\'{o}s numbers}
    
	In this section, we fully characterize \DEF{cbc Haj\'{o}s numbers}. They are numbers~$n$
	such that every compatible sets of~$n$-cbc are of Haj\'{o}s. 
	This is sum up in Theorem~\ref{cbc_hajos_numbers:theorem}.    
    
    \subsection{Haj\'{o}s cases}
    
	It is well known in theory of factorizations of abelian groups that given a 
	factorization~$\PAR{P,Q}$ such that~$\CARD{P}$ is a power of a prime then either~$P$ 
	or~$Q$ is periodic. See for example Theorem~6.1.1 from~\cite{szabo2009factoring}. 
	Inspired by Proposition~3.1 from~\cite{SzaboCyclicGroups}, we prove the 
	following slightly stronger result for the particular case of cyclic groups. 
	
    \begin{proposition} \label{hajos_cases:prop:facto}
    	If~$\PAR{P,Q_1}$ and~$\PAR{P,Q_2}$ are factorizations of size~$n$ 
    	such that~$\CARD{P}$ is a power of a 
    	prime then either~$P$ is periodic or~$Q_1$ and~$Q_2$ share a common period.
    \end{proposition}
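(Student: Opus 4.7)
The strategy combines the classical R\'edei-type theorem for prime-power factorizations with a character-theoretic argument.

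First, since $|P| = p^k$ is a prime power, applying the classical result (e.g., Theorem~6.1.1 of~\cite{szabo2009factoring}) to each factorization $(P, Q_i)$ gives the dichotomy: either $P$ is periodic, or $Q_i$ is periodic. If $P$ is periodic the conclusion is immediate, so assume $P$ is not periodic. Then both $Q_1$ and $Q_2$ are periodic with nontrivial stabilizer subgroups $H(Q_1), H(Q_2) \leq \mathbb{Z}_n$, and the task reduces to proving $H(Q_1) \cap H(Q_2) \neq \{0\}$.

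The next step is to exhibit a subgroup $K \leq \mathbb{Z}_n$ depending only on $P$ that is contained in the stabilizer of every $Q$ such that $(P, Q)$ is a factorization; this will yield $K \subseteq H(Q_1) \cap H(Q_2)$. Using the Fourier transform on $\mathbb{Z}_n$, the identity $\widehat{1_P}(\chi) \cdot \widehat{1_Q}(\chi) = 0$ for every nontrivial character $\chi$ forces the support of $\widehat{1_Q}$ to lie in $\{1\} \cup Z_P$, where $Z_P := \{\chi \neq 1 : \widehat{1_P}(\chi) = 0\}$. Consequently every such $Q$ is stabilized by $K := Z_P^{\perp}$.

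The main obstacle is then to show that $K$ (or, when needed, a suitable refinement of it) is nontrivial, equivalently that $Z_P$ does not generate $\widehat{\mathbb{Z}_n}$ as a subgroup. This is where the prime-power hypothesis is essential. Inspired by Proposition~3.1 of~\cite{SzaboCyclicGroups}, one carries out a cyclotomic-polynomial analysis of the generating polynomial $p_P(x) := \sum_{a \in P} x^a$ in $\mathbb{Z}[x]/(x^n - 1)$: the identity $p_P(1) = p^k$ severely restricts the divisors $d \mid n$ for which $\Phi_d \mid p_P$, and combined with the non-periodicity of $P$, forces $\mathrm{lcm}\{d : \Phi_d \mid p_P\}$ to be strictly less than $n$, which corresponds to the nontriviality of the common stabilizer. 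When the pure Fourier bound degenerates, one sharpens the argument by exploiting the $0/1$ nature of $\widehat{1_Q}$ together with the constraint $|Q| = n/p^k$, narrowing the effective support of $\widehat{1_Q}$ within $\{1\} \cup Z_P$; this refinement is the most delicate ingredient of the proof.
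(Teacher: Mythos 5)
Your opening reduction is sound: the classical dichotomy for prime-power $\CARD{P}$ gives that either $P$ is periodic or both $Q_1$ and $Q_2$ are, and it then suffices to show their stabilizers meet nontrivially. The gap is in the step that carries all the weight. The claim that non-periodicity of $P$ together with $\CARD{P}=p^k$ forces $\mathrm{lcm}\ENS{d : \Phi_d \mid p_P}<n$ (equivalently, that $Z_P^{\perp}\neq\ENS{0}$) is false. Take $n=12$ and $P=\ENS{0,2,3,5}$: then $p_P(x)=(1+x^2)(1+x^3)=\Phi_2\Phi_4\Phi_6$, so the relevant divisors are $2,4,6$ with $\mathrm{lcm}=12=n$ and $Z_P^{\perp}=\ENS{0}$, yet $P$ is non-periodic, $\CARD{P}=2^2$, and $\PAR{P,\ENS{0,4,8}}$ is a factorization of $\Zn{12}$. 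The proposition survives in this example only because every complement of this $P$ happens to be a translate of $\ENS{0,4,8}$, whose Fourier support $3\Zn{12}$ is strictly smaller than $\ENS{1}\cup Z_P$; extracting that extra shrinkage is precisely the ``refinement'' you defer as ``the most delicate ingredient,'' and no argument for it is given. Since that refinement is the entire content of the proposition beyond the classical dichotomy, the proof as written is incomplete. Note also that you are aiming at a strictly stronger statement (one subgroup stabilizing \emph{all} complements of $P$) than what is asserted (any \emph{two} complements share \emph{some} period), and the example shows the stronger form cannot be read off from $Z_P$ alone.

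For comparison, the paper argues quite differently: an induction on $n$ with no Fourier analysis. It invokes the lemma (Theorem~5.5 of~\cite{szabo2009factoring}) that in a normalized factorization $\PAR{P,Q}$ with $\CARD{P}=p^{\alpha}q^{\beta}$ either $\ENGENDRE{P}$ or $\ENGENDRE{Q}$ is a proper subgroup, descends to the two factorizations $\PAR{\frac{1}{t}P,\ \frac{1}{t}\PAR{\PAR{Q_i-q_i}\cap t\Zn{}}}$ of size $n/t$ (or the symmetric reduction on the $P$ side, where the prime-power hypothesis is used to merge the periods of the reduced copies of $P$), applies the induction hypothesis \emph{simultaneously} to the two smaller factorizations, and lifts the common period back via Lemma~2.8 of~\cite{szabo2009factoring}. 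If you want to keep a cyclotomic approach, you would need to build an analogous descent at the level of the polynomials $p_{Q_1}, p_{Q_2}$; the one-shot argument through $Z_P^{\perp}$ cannot work.
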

    
    In order to prove it, we will use the following lemma
    stated as Theorem~5.5 in~\cite{szabo2009factoring}.
	\begin{lemma} \label{hajos_cases:lemma:engendre}
		If~$\PAR{P,Q}$ is a normalized factorization of size~$n$ and~$\CARD{P} = p^\alpha q^\beta$,
		where~$p,q$ are primes and~$\alpha, \beta \geq 0$,
		then either~$\ENGENDRE{P}$ (the subgroup generated by~$P$) is not equal to $\Z$
		or~$\ENGENDRE{Q} \neq \Z$.
	\end{lemma}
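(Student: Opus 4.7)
The plan is to argue by contradiction: suppose $\ENGENDRE{P} = \ENGENDRE{Q} = \Z$ and extract a contradiction from $\CARD{P} = p^\alpha q^\beta$. The natural setting is the group ring $\mathbb{Z}[x]/(x^n-1)$ together with its evaluations at roots of unity, so the first move is to translate both hypotheses into character/Fourier language.

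Writing $P(x) := \sum_{k \in P} x^k$ and $Q(x)$ analogously, the factorization hypothesis becomes $P(x)\,Q(x) \equiv 1 + x + \cdots + x^{n-1} \pmod{x^n-1}$. Evaluating at any primitive $d$-th root of unity $\zeta_d$ with $d \mid n$ and $d > 1$ yields $P(\zeta_d)\,Q(\zeta_d) = 0$, so each such $\zeta_d$ annihilates exactly one of $P(x), Q(x)$. Next I would reformulate $\ENGENDRE{P} = \Z$ as: $P$ is not contained in any proper subgroup of $\Z$, equivalently $P(\zeta_r) \neq \CARD{P}$ for every prime $r \mid n$ (the only way to hit the trivial value is for $P$ to lie inside the index-$r$ subgroup), and symmetrically for $Q$. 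Combining the two, for every prime $r \mid n$ at least one of $P(\zeta_r), Q(\zeta_r)$ must be zero, but neither can equal the trivial value $\CARD{P}$ or $\CARD{Q}$.

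Second, I would exploit the bound on $\CARD{P}$. If $P(\zeta_r) = 0$ for a prime $r \mid n$, then writing $P(\zeta_r)$ as a sum of $r$-th roots of unity with nonnegative integer coefficients summing to $\CARD{P}$ forces $P$ to be equidistributed in the cosets of the unique index-$r$ subgroup of $\Z$; in particular $r \mid \CARD{P} = p^\alpha q^\beta$, so $r \in \ENS{p,q}$. Hence every prime $r \mid n$ with $r \notin \ENS{p,q}$ must instead satisfy $Q(\zeta_r) = 0$, and therefore $r \mid \CARD{Q}$. So far this is compatible with $\CARD{P}\cdot\CARD{Q} = n$; the contradiction must be extracted from a finer analysis at prime-power divisors, not only at primes.

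The final and hardest step is to propagate this vanishing/non-vanishing information from primes $r$ up to prime powers $r^k \mid n$. Here I would invoke a R\'{e}dei/de~Bruijn-style replacement argument (essentially the content of Chapter~5 of \cite{szabo2009factoring}): using the identity $1 + \zeta_r + \cdots + \zeta_r^{r-1} = 0$, one substitutes cosets inside $P$ or $Q$ to descend to a factorization of $\mathbb{Z}_{n/r}$ inheriting the generating-set hypothesis. The two-prime-power shape of $\CARD{P}$ is crucial because iterating the substitution produces new factorizations whose distinguished side still has size of the form $p^{\alpha'} q^{\beta'}$, so the recursion stays in the scope of the statement; after finitely many steps the combined constraints force either $P \subseteq r\Z$ or $Q \subseteq r\Z$ for some prime $r \mid n$, contradicting $\ENGENDRE{P} = \ENGENDRE{Q} = \Z$. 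I expect this propagation step — controlling the interplay between the $p$-adic and $q$-adic valuations of $\CARD{P}$ while preserving both generating hypotheses under the substitution — to be the main obstacle and the technical heart of the proof.
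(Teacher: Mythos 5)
First, note that the paper does not prove this lemma at all: it imports it verbatim as Theorem~5.5 of~\cite{szabo2009factoring} and uses it as a black box in the proof of Proposition~\ref{hajos_cases:prop:facto}. So the question is whether your sketch stands on its own as a proof, and it does not. Your first three steps are correct and standard: passing to $P(x)Q(x)\equiv 1+x+\cdots+x^{n-1} \pmod{x^n-1}$, observing that each primitive $d$-th root of unity ($d\mid n$, $d>1$) kills at least one factor (not ``exactly one'', by the way --- it may kill both), characterizing $\ENGENDRE{P}=\Z$ by $P(\zeta_r)\neq\CARD{P}$ for all primes $r\mid n$, and deducing $r\mid\CARD{P}$ from $P(\zeta_r)=0$ via equidistribution. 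But, as you yourself observe, this only shows that every prime divisor of~$n$ outside~$\ENS{p,q}$ divides~$\CARD{Q}$, which is perfectly consistent with all the hypotheses and produces no contradiction.

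Everything that makes the lemma true is therefore concentrated in your last paragraph, and that paragraph is not a proof: it is a description of the kind of argument you would like to run. You do not specify which cosets get substituted inside $P$ or $Q$, why the resulting pair is again a factorization of a smaller cyclic group, why \emph{both} generating hypotheses (or usable surrogates of them) survive the substitution --- this is precisely the delicate point, since ``not contained in a proper subgroup'' is not preserved under the natural quotient and restriction operations, which is why R\'{e}dei-type theorems are hard --- nor why finitely many iterations eventually force $P\subseteq r\Zn{}$ or $Q\subseteq r\Zn{}$. The hypothesis $\CARD{P}=p^\alpha q^\beta$ is invoked only to say the recursion ``stays in scope'', but no mechanism is exhibited that actually exploits it to close the argument; indeed the statement is false if $\CARD{P}$ is allowed three prime divisors, so any correct proof must use this hypothesis in an essential, visible way. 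As it stands the proposal is a correct reduction to the hard step plus an acknowledgment that the hard step remains to be done; if the goal is to match the paper, the honest move is simply to cite Theorem~5.5 of~\cite{szabo2009factoring}, as the paper does.
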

    
    \begin{proof}[Proof of Proposition~\ref{hajos_cases:prop:facto}]
    	We prove it by a recurrence on~$n$.
    	We can suppose that~$\PAR{P,Q_1}$ and~$\PAR{P,Q_2}$ are normalized factorizations of 
    	size~$n$ and that~$\CARD{P} = p^\alpha$, where~$p$ is prime and~$\alpha \geq 0$.
    	
    	If~$\CARD{P} = 1$ then~$P = \ENS{0}$ and~$Q_1 = Q_2 = \entiers$ (in~$\Z$) and thus 
    	they verify the
    	proposition. Symmetrically, if~$\CARD{Q_1} = \CARD{Q_2} = 1$ 
    	then~$P = \entiers$ (in~$\Z$) and~$Q_1 = Q_2 = \ENS{0}$ and thus the proposition is satisfied.
    	
    	Suppose that the proposition is true for every factorizations of size~$k < n$.
    	According to Lemma~\ref{hajos_cases:lemma:engendre} 
    	either~$\ENGENDRE{P} \neq \Z$ or~$\ENGENDRE{Q_1} \neq \Z$ and~$\ENGENDRE{Q_2} \neq \Z$.
    	In the first case, there exists a prime~$t \, {\mid}\, n$ such that~$P \subseteq t \Zn{}$. 
    	According to Lemma~2.4 from~\cite{szabo2009factoring}, 
    	for all~$q_1 \in Q_1$ and~$q_2 \in Q_2$,
    	\begin{equation} \label{hajos_cases:eq:facto_1}
    		\PAR{\frac{1}{t}P, \frac{1}{t}\PAR{\PAR{Q_i-q_i} \cap t\Zn{}}} 
    		\hspace{15pt} (\text{where } i = 1,2)
		\end{equation}
    	are normalized factorizations of size~$\frac{n}{t}$.
    	
    	By recurrence hypothesis, either~$\frac{1}{t}P$ is periodic in~$\Zn{\frac{n}{t}}$ and 
    	thus~$P$ is 
    	periodic in~$\Z$ or the right sides of~\eqref{hajos_cases:eq:facto_1} share a common 
    	period~$g$ in~$\Zn{\frac{n}{t}}$.
    	For the second case, we have that
		\begin{equation*}
			tg \in \bigcap\limits_{q_j \in Q_i}\PAR{Q_i - q_j},
		\end{equation*}		
		for~$i = 1,2$. Thus~$tg$ is a common period of~$Q_1$ and~$Q_2$ in~$\Z$ 
		according to Lemma~2.8 from~\cite{szabo2009factoring}.
		
		Last case occurs when~$\ENGENDRE{P} = \Z$ and thus~$\ENGENDRE{Q_1} \neq \Z$.
		There exists a prime~$t \, {\mid}\, n$ such that~$Q_1 \subseteq t \Zn{}$. 
		If~$t \neq p$ 
		then~$tP + Q_1 \subseteq t \Zn{} \neq \Z$ which contradicts Proposition~3 from~\cite{Sands}. 
		Thus~$t=p$ and~$Q_1 \subseteq p \Zn{}$. We also get~$Q_2 \subseteq p \Zn{}$ 
		with the same argument. 
		
		As similar as before, for all~$p_j \in P$,
		\begin{equation} \label{hajos_cases:eq:facto_2}
    		\PAR{\frac{1}{p}\PAR{\PAR{P-p_j} \cap p\Zn{}}, \frac{1}{p}Q_i} 
    		\hspace{15pt} (\text{where } i = 1,2)
		\end{equation}
		are normalized factorizations of size~$\frac{n}{p}$.
		By recurrence hypothesis, either~$\frac{1}{p}Q_1$ and~$\frac{1}{p}Q_2$ share a commune 
		period 
		in~$\Zn{\frac{n}{p}}$, and thus~$Q_1$ and~$Q_2$ share a commune 
		period in~$\Z$, or 
		 the left sides of~\eqref{hajos_cases:eq:facto_2} are periodic in~$\Zn{\frac{n}{p}}$.
		 
		 For the second case, since their cardinalities are powers of~$p$ then
		 they share a common period~$g$ in~$\Zn{\frac{n}{p}}$ (take~$g$ as the maximum of their 
		 periods, for example).
    	Thus, we have that
		\begin{equation*}
			pg \in \bigcap\limits_{p_j \in P}\PAR{P - p_j}.
		\end{equation*}		
		So~$pg$ is a period of~$P$ in~$\Z$, according to Lemma~2.8 from~\cite{szabo2009factoring}. 
		This concludes the proof.
	\end{proof}    	    
    
    We extend Proposition~\ref{hajos_cases:prop:facto} to compatible sets. 
	
	\begin{theorem} \label{th:bord_premier}
    If~$\EnsCal{E}$ is a compatible set of cbc such that its stable is bordered by~$\PAR{P,Q}$ 
    where~$\CARD{P}$ is a power of a 
    prime then~$\EnsCal{E}$ is of Haj\'{o}s.
    \end{theorem}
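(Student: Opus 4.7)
The plan is to prove the theorem by induction on $n$. The base case $n = 1$ is immediate: the only compatible set of $1$-cbc is $\ENS{\ENS{b}}$, which is of Haj\'{o}s by definition.

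For the inductive step with $n > 1$, I first deduce that at least one of $P$ and $Q$ must be periodic in $\Z$. Apply Proposition~\ref{hajos_cases:prop:facto} to the single factorization $(P,Q)$, taking $Q_1 = Q_2 = Q$: since $\CARD{P}$ is a prime power, either $P$ is periodic, or $Q$ shares a non-zero common period with itself, which amounts to $Q$ being periodic.

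Assume that $P$ is $m$-periodic for some proper divisor $m$ of $n$; the case where $Q$ is $m$-periodic is handled symmetrically by passing to $\DUAL{\EnsCal{E}}$, since the Haj\'{o}s property for compatible sets is preserved under duality and the reduction below only uses periodicity on one side (no prime-power hypothesis on $\CARD{Q}$ is required). Write $P = P' + m\ENTIERS{n/m}$ with $P' = \MODn{P}{m}$; the identity $\CARD{P} = \CARD{P'}(n/m)$ forces both factors to be powers of the same prime, so $\CARD{P'}$ remains a prime power. I then invoke Proposition~\ref{prop:critere_periodique} to conclude that $\EnsCal{E}$ is $m$-right-periodic, and apply the converse direction of Proposition~\ref{proposition_cbc_period} to obtain a compatible set $\EnsCal{E}_0$ of $m$-cbc with $\EnsCal{E} \in \EnsCal{H}_{n/m}(\EnsCal{E}_0)$ whose stable is bordered by $(P', Q)$. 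The induction hypothesis applied to $\EnsCal{E}_0$ (a compatible set of $m$-cbc with $m < n$ and $\CARD{P'}$ still a prime power) yields that $\EnsCal{E}_0$ is of Haj\'{o}s; extending its Haj\'{o}s chain by one $H_{n/m}$-step then shows $\EnsCal{E}$ is of Haj\'{o}s.

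The main obstacle will be verifying the hypothesis of Proposition~\ref{prop:critere_periodique}, namely that $\R{k}{n}{a^{P'} Y}$ is $m$-periodic in $\Z$ for every $k \in \entiers$ and every $Y \in \EnsCal{E}$. I expect this to follow by combining the $m$-periodicity of $P$ with the border identity $a^P \SOM{Y} a^Q \Mod \sum_{i,j \in \entiers} a^i b a^j$ and with the family of factorizations $(P, \L{k_2}{n}{Y a^Q})$ produced by Theorem~\ref{th:bordante}. A useful intermediate fact, derivable from unique factorization in $\Z$, should be: whenever $P$ is $m$-periodic and $(P,S)$ is a factorization of size $n$, the set $S$ injects modulo $m$ and $(P', \MODn{S}{m})$ is a factorization of size $m$. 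This arithmetic bridge between $\Z$ and $\Zn{m}$ is what propagates the $m$-periodic structure of $P$ into the required periodicity of the $R$-sets.
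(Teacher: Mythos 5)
Your overall induction scheme matches the paper's, but the step you yourself flag as ``the main obstacle'' is a genuine gap, and the bridge you propose does not close it. To invoke Proposition~\ref{prop:critere_periodique} you must show that \emph{all} the sets $\R{k}{n}{a^{P'}Y}$ (for every $k$ and every $Y\in\EnsCal{E}$) are $m$-periodic \emph{with the same period}. You try to deduce this from the $m$-periodicity of $P$ alone, but periodicity does not transfer between different left-cofactors of the same set: in $\Zn{12}$ both $\ENS{0,1,4,5,8,9}$ and $\ENS{0,1,2,3,4,5}$ are cofactors of $\ENS{0,6}$, yet only the first is periodic. The transfer you need is exactly the content of Lemma~\ref{Lemma_krasner_factorization}, which is valid only for Krasner factorizations --- and having a Krasner border is what you are trying to prove, not something you may assume. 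Your proposed ``arithmetic bridge'' concerns the right-cofactors $S$ of $P$ (the sets $\L{k}{n}{Ya^Q}$), whereas the hypothesis of Proposition~\ref{prop:critere_periodique} is about the left-cofactors $\R{k}{n}{a^{P'}Y}$ of $Q$; the two families are not connected by that fact.

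The paper avoids this by using Proposition~\ref{hajos_cases:prop:facto} in its full two-cofactor strength (this is precisely why that proposition is stated for a pair $(P,Q_1)$, $(P,Q_2)$ rather than a single factorization), and by applying it not to $(P,Q)$ alone but to the entire families $\EnsCal{R}:=\ENS{P}\cup\ENS{\R{k}{n}{a^PX}}$ and $\EnsCal{L}:=\ENS{Q}\cup\ENS{\L{k}{n}{Xa^Q}}$ supplied by Theorem~\ref{th:bordante}: since every pair $(R,L)$ with $R\in\EnsCal{R}$, $L\in\EnsCal{L}$ is a factorization and every $\CARD{R}$ is a prime power, the dichotomy forces a \emph{common} period across one whole family, which is what Proposition~\ref{prop:critere_periodique} actually requires. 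You only use the degenerate case $Q_1=Q_2=Q$ (the classical ``one of the two factors is periodic'' theorem), which leaves open the possibility that the various $\R{k}{n}{a^{P'}Y}$ are non-periodic, or periodic with incompatible periods --- a phenomenon the paper's own Proposition~\ref{non_hajos:prop:period_commune_R} shows is real once the prime-power hypothesis is dropped. To repair your proof you would need to reinstate the family-wide application of the two-cofactor proposition.
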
 
    
    \begin{proof}
    We prove it by recurrence.
    	Let~$\EnsCal{E}$ be a compatible set of~$n$-cbc whose stable is bordered by~$\PAR{P,Q}$.
    	If~$\CARD{P} = 1$ (resp.~$\CARD{Q} = 1$) then the Krasner 
    	factorization~$\PAR{\ENS{0}, \entiers}$ (resp.~$\PAR{\entiers, \ENS{0}}$)
    	borders~$\EnsCal{E}^\circ$ and thus~$\EnsCal{E}$ is of Haj\'{o}s according 
    	to Theorem~\ref{Theorem_carac_periodic}.
    	
    	Suppose that the proposition is true for every compatible set of~$i$-cbc, where~$i < n$.
    	Let
    	\begin{equation*}
    		\EnsCal{L} := \ENS{Q} \cup 
    		\ENS{\L{k}{n}{X a^Q}, k \in \entiers, X \in \EnsCal{E}}
    	\end{equation*}
    	and 
    	\begin{equation*}
    		\EnsCal{R} := \ENS{P} \cup 
    		\ENS{\R{k}{n}{a^P X}, k \in \entiers, X \in \EnsCal{E}}.
    	\end{equation*}
    	
    	For any~$L \in \EnsCal{L}, R \in \EnsCal{R}$, the ordered pair~$\PAR{R,L}$ 
    	is a factorization where~$\CARD{R}$ is a power of a prime thus, 
    	according to Proposition~\ref{hajos_cases:prop:facto},    	
    	either elements of~$\EnsCal{L}$ or elements of~$\EnsCal{R}$ share a 
    	common period.
    	In first case (resp. second),
    	according to 
    	Proposition~\ref{prop:critere_periodique},
    	there exists a compatible set~$\EnsCal{E}'$ and~$t > 1$ such 
    	that~$\DUAL{\EnsCal{E}} \in \EnsCal{H}_{t}\PAR{\EnsCal{E'}}$ 
    	(resp.~$\EnsCal{E} \in \EnsCal{H}_{t}\PAR{\EnsCal{E'}}$).
    	Thanks to the recurrence hypothesis,~$\EnsCal{E'}$ is of Haj\'{o}s 
    	thus~$\EnsCal{E}$ is also 
    	of Haj\'{o}s according to Proposition~\ref{proposition_cbc_period}. 
    \end{proof}

	Theorem~\ref{th:bord_premier} provides two straight forward corollaries that characterize 
	cbc Haj\'{o}s numbers.
    
    \begin{corollary} \label{coro:trois_premiers}
   	If~$n$ is the product of at most three primes 
   	(eventually equal) then it is a cbc Haj\'{o}s number.
    \end{corollary}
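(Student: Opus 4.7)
The plan is to combine the existence of a bordering factorization provided by Proposition~\ref{prop:stable} with the prime-power criterion in Theorem~\ref{th:bord_premier}.

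Let~$n$ be a product of at most three primes (counted with multiplicity) and let~$\EnsCal{E}$ be any compatible set of~$n$-cbc. First I would apply Proposition~\ref{prop:stable} to obtain a cbc~$Y \in \EnsCal{E}^\circ$ that borders~$\EnsCal{E}^\circ$. Picking any~$R \in R(Y)$ then yields a factorization~$\PAR{R, L(Y)}$ of size~$n$ that borders~$\EnsCal{E}^\circ$, and in particular~$\EnsCal{E}$, with~$\CARD{R} \cdot \CARD{L(Y)} = n$.

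The arithmetic core of the argument is the elementary observation that in every factorization~$n = ab$ at least one of~$a, b$ is a prime power (including the trivial case~$1 = p^0$, which Theorem~\ref{th:bord_premier} handles in its base case). Indeed, if neither~$a$ nor~$b$ were a prime power, each would involve at least two distinct prime factors, so~$n$ would have at least four prime factors counted with multiplicity, contradicting the hypothesis. Applied to~$\CARD{R} \cdot \CARD{L(Y)} = n$, this forces either~$\CARD{R}$ or~$\CARD{L(Y)}$ to be a prime power.

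If~$\CARD{R}$ is the prime power, Theorem~\ref{th:bord_premier} applied to the bordering pair~$\PAR{R, L(Y)}$ directly yields that~$\EnsCal{E}$ is of Haj\'{o}s. If instead~$\CARD{L(Y)}$ is the prime power, I would pass to the dual: reversing all words shows that~$\PAR{L(Y), R}$ borders the stable of the compatible set~$\DUAL{\EnsCal{E}}$, so Theorem~\ref{th:bord_premier} gives that~$\DUAL{\EnsCal{E}}$ is of Haj\'{o}s, and then the duality observation following the definition of Haj\'{o}s compatible sets yields that~$\EnsCal{E}$ itself is of Haj\'{o}s.

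The main obstacle has already been absorbed into Theorem~\ref{th:bord_premier}; the only points needing attention are the elementary divisor-pair analysis above and the symmetric passage to the dual when the prime-power cardinality lies on the~$Q$-side rather than the~$P$-side.
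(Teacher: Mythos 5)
Your proposal is correct and follows essentially the same route as the paper: obtain a bordering factorization $\PAR{P,Q}$ of $\EnsCal{E}^\circ$ (the paper invokes Theorem~\ref{th:bord:FN}, you invoke Proposition~\ref{prop:stable}, which suffices here), observe that since $n=\CARD{P}\CARD{Q}$ is a product of at most three primes one factor must be a prime power, and conclude via Theorem~\ref{th:bord_premier}. Your explicit treatment of the dual case when the prime-power cardinality sits on the $Q$-side is a detail the paper leaves implicit, but it is the same argument.
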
   
    
    \begin{proof}
    Let~$\EnsCal{E}$ be a compatible set of~$n$-cbc, where~$n$ is the product of at most three primes.
    According to Theorem~\ref{th:bord:FN},~$\EnsCal{E}^\circ$ is bordered by a 
    factorization~$\PAR{P,Q}$.
    Since~$n = \CARD{P}\times\CARD{Q}$, either~$\CARD{P}$ or~$\CARD{Q}$ is equal to~$1$ or 
    a prime thus~$\EnsCal{E}$ is of Haj\'{o}s according to Theorem~\ref{th:bord_premier}. 
    This concludes the proof.
    \end{proof}
    
    \begin{corollary} \label{coro:p_k_q}
   	Numbers of the form~$p^kq$, where~$k \geq 0$ and~$p,q$ are primes, are cbc Haj\'{o}s numbers.
    \end{corollary}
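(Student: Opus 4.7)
The plan is to reduce to Theorem \ref{th:bord_premier} by a simple divisibility argument on the sizes of a bordering factorization, followed by a duality step when it is the second component whose cardinality is a prime power.

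First, given a compatible set~$\EnsCal{E}$ of~$n$-cbc with~$n = p^k q$, I invoke Theorem \ref{th:bord:FN} to obtain a factorization~$(P,Q)$ bordering~$\EnsCal{E}^\circ$. Since~$\CARD{P} \cdot \CARD{Q} = n = p^k q$, I write~$\CARD{P} = p^a q^b$ with~$0 \leq a \leq k$ and~$0 \leq b \leq 1$; correspondingly~$\CARD{Q} = p^{k-a} q^{1-b}$. The key observation is the elementary arithmetic fact that in every such factoring of~$p^k q$, at least one of the two sizes is a power of a (single) prime: if~$b = 0$ then~$\CARD{P} = p^a$, and if~$b = 1$ then~$\CARD{Q} = p^{k-a}$ (and symmetrically, if~$a = 0$ or~$a = k$ we also land in the prime-power case).

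If~$\CARD{P}$ is a power of a prime, Theorem \ref{th:bord_premier} applies directly to~$\EnsCal{E}$, yielding that~$\EnsCal{E}$ is of Haj\'{o}s. Otherwise~$\CARD{Q}$ is a power of a prime; here I pass to the dual. As noted just after the definition of border, $(P,Q)$ borders an~$n$-cbc~$X$ if and only if $(Q,P)$ borders~$\DUAL{X}$, so $(Q,P)$ borders~$\DUAL{\EnsCal{E}}^\circ = (\DUAL{\EnsCal{E}})^\circ$. Then Theorem \ref{th:bord_premier} applied to~$\DUAL{\EnsCal{E}}$ with bordering factorization~$(Q,P)$ gives that~$\DUAL{\EnsCal{E}}$ is of Haj\'{o}s, and the remark at the end of Section \ref{HajosCbc} (that~$\EnsCal{E}$ is of Haj\'{o}s iff~$\DUAL{\EnsCal{E}}$ is) concludes.

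There is essentially no obstacle: the corollary is a direct arithmetic consequence of Theorem \ref{th:bord_premier} once one notes that for~$n = p^k q$ every factoring~$n = uv$ has~$u$ or~$v$ a prime power. The only point worth a line of justification is the duality step, which relies on the symmetry~$(P,Q) \text{ borders } X \iff (Q,P) \text{ borders } \DUAL{X}$ and on the fact that Haj\'{o}s-ness is preserved under passing to the dual, both of which are recorded earlier in the paper.
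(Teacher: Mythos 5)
Your proof is correct and follows essentially the same route as the paper: invoke Theorem~\ref{th:bord:FN} to get a bordering factorization $(P,Q)$ of $\EnsCal{E}^\circ$, observe that for $n=p^kq$ one of $\CARD{P},\CARD{Q}$ must be a power of $p$, and conclude via Theorem~\ref{th:bord_premier}. The only difference is that you make explicit the duality step needed when it is $\CARD{Q}$ rather than $\CARD{P}$ that is the prime power, which the paper leaves implicit; this is a welcome clarification but not a different argument.
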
  
    
    \begin{proof}
    Let~$\EnsCal{E}$ be a compatible set of~$p^kq$-cbc, where~$k \geq 0$ and~$p,q$ are primes.
    According to Theorem~\ref{th:bord:FN},~$\EnsCal{E}^\circ$ is bordered by a 
    factorization~$\PAR{P,Q}$.
    Since~$p^kq = \CARD{P}\times\CARD{Q}$, either~$\CARD{P}$ or~$\CARD{Q}$ is a power of~$p$
     thus~$\EnsCal{E}$ is of Haj\'{o}s according to Theorem~\ref{th:bord_premier}. 
    This concludes the proof.
    \end{proof}
    
	Haj\'{o}s characterization provides some simple enumerative formulas. 	    
    
    \begin{example}
    	Given a prime number~$p$, we can enumerate and count~$p$-cbc which are also 
    	Haj\'{o}s cbc of size~$p$, according to Corollary~\ref{coro:trois_premiers}. 
    	We have that
    	\begin{equation*}
    	\CARD{H_p} = \CARD{H_p\PAR{\ENS{b}}} + \CARD{\DUAL{H_p\PAR{\ENS{b}}}} - 
    	\CARD{H_p\PAR{\ENS{b}} \cap \DUAL{H_p\PAR{\ENS{b}}}}.
    	\end{equation*}
    	We first enumerate the set~$H_p\PAR{\ENS{b}}$ which is equal to
		\begin{equation*}
    		\ENS{
   			\ENS{a^{k_{1}}b,a^{k_{2}}ba, \dots, 
   			a^{k_{p}}ba^{p-1}},
   			k_{1}, \dots, k_{p} \in \ENTIERS{p}}.
    	\end{equation*}    	
    	Thus $\CARD{H_p\PAR{\ENS{b}}} = p^p$. Similarly, we 
    	have~$\CARD{\DUAL{H_p\PAR{\ENS{b}}}} = p^p$.   
    	Moreover, their meet is equal to
    	\begin{equation} \label{exemple:cardinal_meet}
    		\ENS{
   			\ENS{a^{0}ba^{\sigma_1-1},a^{1}ba^{\sigma_2-1}, \dots, 
   			a^{p-1}ba^{\sigma_{p}-1}},
   			\sigma \in \mathfrak{S}_p},
    	\end{equation}
    	where~$\mathfrak{S}_p$ is the group of permutations of size~$p$.
    	Thus the cardinal of~\eqref{exemple:cardinal_meet} is equal to~$p!$.
    	Finally, we have the formula 
    	\begin{equation*}
    		\CARD{H_p} = 2p^p - p!.
    	\end{equation*}
    \end{example}

    \subsection{Non-Haj\'{o}s cases}
    
	In this section, we prove that numbers not concerned by 
	Corollaries~\ref{coro:trois_premiers} and~\ref{coro:p_k_q} are not cbc Haj\'{o}s numbers.
	
	\begin{proposition} \label{non_hajos_non_cbc_hajos}
		Non-Haj\'{o}s numbers are non-cbc Haj\'{o}s numbers.
	\end{proposition}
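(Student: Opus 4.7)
The plan is to convert a non-Haj\'{o}s factorization of size~$n$ directly into a stable singleton set of~$n$-cbc whose stable is not bordered by any Krasner factorization; by Theorem~\ref{Theorem_carac_periodic}, that gives a non-Haj\'{o}s compatible set and witnesses that~$n$ is not a cbc Haj\'{o}s number.

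Let~$(P,Q)$ be a non-Haj\'{o}s factorization of size~$n$ and set~$X := a^Q b a^P$, which is an~$n$-cbc bordered by~$(P,Q)$, as recalled just after Proposition~\ref{prop:stable}. First I would check that~$\ENS{X}$ is already stable. Unfolding the definition of the product, for any~$r \in \entiers$,
\begin{equation*}
X \circ_r X = \ENS{a^{q} b a^{p} \text{ such that } q \in Q,\, p \in P,\, \exists (p', q') \in P \times Q,\, \MOD{p' + q'} = r}.
\end{equation*}
The existential condition is automatically satisfied because~$(P,Q)$ is a factorization, so~$X \circ_r X = a^Q b a^P = X$ for every~$r$, and by iteration every composition of copies of~$X$ equals~$X$, so~$\ENS{X}^\circ = \ENS{X}$.

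Next I would show that this stable set is not bordered by any Krasner factorization; by Theorem~\ref{Theorem_carac_periodic} this is exactly what it means for~$\ENS{X}$ to fail to be of Haj\'{o}s. Suppose for contradiction that some Krasner factorization~$(U,V)$ of size~$n$ borders~$X$. Expanding the border relation~$a^U \SOM{X} a^V \Mod \sum_{i,j \in \entiers} a^i b a^j$ gives
\begin{equation*}
\sum_{u \in U,\, q \in Q,\, p \in P,\, v \in V} a^{u+q} b a^{p+v} \Mod \sum_{i, j \in \entiers} a^i b a^j,
\end{equation*}
and since both sides have total weight~$n^2$, a multiplicity count on each letter~$a^i b a^j$ factors over the left and right exponents and forces both~$(U,Q)$ and~$(P,V)$ to be factorizations of size~$n$. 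By Theorem~3.2 of~\cite{de1999hajos} recalled just before Theorem~\ref{Theorem_carac_periodic}, this makes~$(P,Q)$ a Haj\'{o}s factorization, contradicting the choice of~$(P,Q)$.

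The main subtle point is the identity~$X \circ_r X = X$ establishing stability of~$\ENS{X}$; once that is in hand, the Haj\'{o}s obstruction is extracted by a purely multiplicative count followed by the cited structural theorem of de~Bruijn.
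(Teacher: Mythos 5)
Your proof is correct and takes essentially the same route as the paper: starting from a non-Haj\'{o}s factorization~$(P,Q)$, form the bayonet code~$a^Qba^P$ (the paper writes~$a^Pba^Q$, which is only a cosmetic swap), assume a Krasner border~$(U,V)$, extract the factorizations~$(U,Q)$ and~$(P,V)$, and contradict Theorem~3.2 of de~Bruijn. Your explicit verification that~$\ENS{X}$ is stable with~$X \circ_r X = X$, and the multiplicity count forcing both pairs to be factorizations, are details the paper leaves implicit but they do not alter the argument.
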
  
	
	\begin{proof}
		Let~$n$ be a non-Haj\'{o}s number and~$\PAR{P,Q}$ be a
    non-Haj\'{o}s factorization of size~$n$.
		Suppose that the~$n$-cbc~$a^Pba^Q$ is of Haj\'{o}s then it is bordered by a 
		Krasner factorization~$\PAR{U,V}$,
		according to Theorem~\ref{Theorem_carac_periodic}. 
		The ordered pairs~$(U,P)$ and~$(Q,V)$ must be factorizations and thus, 
    according to Theorem~3.2 of~\cite{de1999hajos},~$\PAR{P,Q}$ 
    must be a Haj\'{o}s factorization which is a contradiction.
    Thus~$n$ is a non-cbc Haj\'{o}s number.
	\end{proof}	   
    
	Even if the numbers~$p_1^2q_1^2, p_1p_2q_1^2$, and~$p_1p_2q_1q_2$ 
	(when~$p_1,p_2,q_1,q_2$ are distinct primes) 
	are of Haj\'{o}s, we prove in this section that there are not cbc Haj\'{o}s numbers.
	
	We set for the rest of this section, the ordered pairs~$(L,R_1)$ and~$(L,R_2)$, where	
	\begin{equation*}
		L := p_1p_2\ENTIERS{q_1} + q_1q_2\ENTIERS{p_1},
		R_1 := p_1 p_2 q_1\ENTIERS{q_2} + p_1\ENTIERS{p_2},
		R_2 := p_1 q_1 q_2\ENTIERS{p_2} + q_1\ENTIERS{q_2},
	\end{equation*}	
	and~$p_1,p_2,q_1,q_2$ are primes such that~$p_1 p_2 \wedge q_1 = q_1 q_2 \wedge p_1 = 1$.	
	For example, if~$p_1 = 2, p_2 = 2, q_1 = 3,$ and~$q_2 = 3$ then
	\begin{equation*}
		\begin{array}{ccccc}
			L &=& \ENS{0,4,8} + \ENS{0,9} &=& \ENS{0,4,8,9,13,17},\\
			R_1 &=& \ENS{0,12,24} + \ENS{0,2} &=& \ENS{0,2,12,14,24,26},\\
			R_2 &=& \ENS{0,18} + \ENS{0,3,6} &=& \ENS{0,3,6,18,21,24}.
		\end{array}
	\end{equation*}
	
	First, we prove that those ordered pairs are factorizations of size~$n := p_1 p_2 q_1 q_2$.
	
	\begin{proposition} \label{non_Hajos:prop:CE_facto}
	The ordered pairs~$(L,R_1)$ and~$(L,R_2)$ are factorizations.
	\end{proposition}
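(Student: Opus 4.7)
The plan is to verify each claim in two steps: first compute the cardinalities $|L|$, $|R_1|$, $|R_2|$, then show the sumsets $L + R_1$ and $L + R_2$ each cover $\mathbb{Z}_n$. Since we will obtain $|L| \cdot |R_i| = n$, coverage of $\mathbb{Z}_n$ will force uniqueness of the decomposition by pigeonhole, which is exactly the factorization property.

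For the cardinalities, I would show that the natural parametrizations are injective modulo~$n$. Suppose
\begin{equation*}
p_1 p_2 i + q_1 q_2 j \equiv p_1 p_2 i' + q_1 q_2 j' \pmod{n}
\end{equation*}
with $i, i' \in \ENTIERS{q_1}$ and $j, j' \in \ENTIERS{p_1}$. Reducing modulo $q_1$ and using $p_1 p_2 \wedge q_1 = 1$ gives $i = i'$, while reducing modulo $p_1$ and using $q_1 q_2 \wedge p_1 = 1$ gives $j = j'$. Hence $|L| = p_1 q_1$. The same style of argument applied to the generators of $R_1$ and $R_2$, using that $p_1, p_2, q_1, q_2$ are primes together with the given coprimalities, yields $|R_1| = |R_2| = p_2 q_2$, so $|L| \cdot |R_1| = |L| \cdot |R_2| = n$.

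For the coverage of $L + R_1$, I would use the elementary identity $\ENTIERS{a} + a \ENTIERS{b} = \ENTIERS{ab}$ (valid as a set of integers, hence a fortiori modulo~$n$) to regroup
\begin{equation*}
L + R_1 = p_1 p_2 \bigl( \ENTIERS{q_1} + q_1 \ENTIERS{q_2} \bigr) + p_1 \ENTIERS{p_2} + q_1 q_2 \ENTIERS{p_1} = p_1 \ENTIERS{p_2 q_1 q_2} + q_1 q_2 \ENTIERS{p_1}.
\end{equation*}
The right-hand side is the sumset of the cyclic subgroups $p_1 \mathbb{Z}_n$ and $q_1 q_2 \mathbb{Z}_n$; since $\gcd(p_1, q_1 q_2) = 1$, this sumset is all of $\mathbb{Z}_n$. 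Concretely, given $k \in \mathbb{Z}_n$, the congruence $k \equiv q_1 q_2 b \pmod{p_1}$ determines a unique $b \in \ENTIERS{p_1}$, after which $k - q_1 q_2 b$ lies in $p_1 \mathbb{Z}_n$ and yields a unique element of $p_1 \ENTIERS{p_2 q_1 q_2}$. Thus $L + R_1 = \mathbb{Z}_n$, completing the proof for $(L, R_1)$.

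The case of $(L, R_2)$ is entirely symmetric: the analogous regrouping gives
\begin{equation*}
L + R_2 = q_1 q_2 \ENTIERS{p_1 p_2} + q_1 \ENTIERS{q_2} + p_1 p_2 \ENTIERS{q_1} = q_1 \ENTIERS{q_2 p_1 p_2} + p_1 p_2 \ENTIERS{q_1},
\end{equation*}
and coverage of $\mathbb{Z}_n$ follows from $\gcd(p_1 p_2, q_1) = 1$. The only delicate step in the whole argument is the bookkeeping of coprimalities when swapping between reductions modulo $p_1$, $q_1$, and $n$, but no genuine obstacle is anticipated.
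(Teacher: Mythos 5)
Your proof is correct and follows essentially the same route as the paper: the same telescoping of the sumset into $p_1 \ENTIERS{p_2 q_1 q_2} + q_1 q_2 \ENTIERS{p_1}$ and the same appeal to $q_1 q_2 \wedge p_1 = 1$ to cover $\Zn{n}$. The only difference is that you spell out the cardinality count and the pigeonhole step that the paper leaves implicit (a welcome addition, though note that $q_1 q_2 \ENTIERS{p_1}$ is a set of coset representatives rather than the subgroup $q_1 q_2 \Zn{n}$; your subsequent concrete argument does not rely on that misstatement).
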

	
	\begin{proof}
	The sum~$L + R_1$ is equal to
	\begin{equation*}
	p_1\ENTIERS{p_2} + 
	p_1 p_2\ENTIERS{q_1} + 
	p_1 p_2 q_1\ENTIERS{q_2} + 
	q_1 q_2\ENTIERS{p_1}
	=
	p_1 \ENTIERS{p_2q_1q_2} + 
	q_1 q_2 \ENTIERS{p_1}
	\end{equation*}
	Since~$q_1 q_2 \wedge p_1 = 1$ then~$q_1 q_2 \ENTIERS{p_1}$
	is equal to~$\ENTIERS{p_1}$ in~$Z_{p_1}$.
	So~$L + R_1$ is equal to 
	\begin{equation*}
		p_1 \ENTIERS{p_2q_1q_2} + \ENTIERS{p_1}
		= \ENTIERS{n}
	\end{equation*}
	in~$Z_n$.
	Thus~$(L,R_1)$ is a factorization.
	
	Similar argument can be applied to~$(L,R_2)$.
	\end{proof}
	
	Now, we study their periodicity.
	
	\begin{proposition} \label{non_hajos:prop:period_commune_R}
	The sets~$R_1$ and~$R_2$ are periodic in~$Z_n$ without common period 
	and~$L$ is not periodic in~$Z_n$. 
	\end{proposition}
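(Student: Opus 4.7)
The plan is to push everything through the Chinese Remainder decomposition of $\Z$. In each of the three target cases of $n$ ($p_1^2 q_1^2$, $p_1 p_2 q_1^2$, $p_1 p_2 q_1 q_2$), one has $p_1 \neq q_1$ and $p_2 \neq q_2$ in addition to the standing hypothesis $\gcd(p_1 p_2, q_1) = \gcd(q_1 q_2, p_1) = 1$, which gives $\gcd(p_1 p_2, q_1 q_2) = 1$ and hence
\begin{equation*}
\Z = \ENGENDRE{p_1 p_2} \oplus \ENGENDRE{q_1 q_2} \cong \Zn{q_1 q_2} \times \Zn{p_1 p_2}.
\end{equation*}
Under this identification $p_1 p_2\ENTIERS{q_1}$ becomes the interval $\ENTIERS{q_1}$ of the first factor and $q_1 q_2\ENTIERS{p_1}$ becomes the interval $\ENTIERS{p_1}$ of the second factor, so $L$ corresponds to the product set $\ENTIERS{q_1} \times \ENTIERS{p_1}$.

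For $R_1$, the summand $p_1 p_2 q_1\ENTIERS{q_2}$ is exactly the cyclic subgroup $\ENGENDRE{p_1 p_2 q_1}$ of order $q_2$ (since $q_2 \cdot p_1 p_2 q_1 = n$), so $R_1$ is a union of cosets of this subgroup and is $p_1 p_2 q_1$-periodic. A direct-sum verification gives $\CARD{R_1} = p_2 q_2$, so its period subgroup $H_1$ has order both a multiple of $q_2$ and a divisor of $p_2 q_2$: either $\CARD{H_1} = q_2$ or $\CARD{H_1} = p_2 q_2$. The latter would force $R_1$ to be a single coset of $H_1$, hence to equal the unique order-$p_2 q_2$ subgroup $\ENGENDRE{p_1 q_1}$; but $p_1 \in R_1 \setminus \ENGENDRE{p_1 q_1}$, so $H_1 = \ENGENDRE{p_1 p_2 q_1}$. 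The symmetric argument yields $H_2 = \ENGENDRE{p_1 q_1 q_2}$ for $R_2$, of order $p_2$. Since $p_2 \neq q_2$ are distinct primes, $\operatorname{lcm}(p_1 p_2 q_1, p_1 q_1 q_2) = p_1 q_1 p_2 q_2 = n$, so $H_1 \cap H_2 = \ENS{0}$ and $R_1$, $R_2$ share no nonzero period.

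For $L$, a product set is periodic in $\Zn{q_1 q_2} \times \Zn{p_1 p_2}$ iff each coordinate is a period of the corresponding factor, so it suffices to check that $\ENTIERS{q_1}$ has no nonzero period in $\Zn{q_1 q_2}$ (the argument for $\ENTIERS{p_1}$ is identical). A nonzero period would give a period subgroup of order dividing the prime $q_1$ and at least~$2$, hence of order exactly $q_1$; since $0 \in \ENTIERS{q_1}$ the set would then equal the unique order-$q_1$ subgroup $\ENGENDRE{q_2}$ of $\Zn{q_1 q_2}$, which is impossible because $1 \in \ENTIERS{q_1}$ but $1 \notin \ENGENDRE{q_2}$ when $q_2 \geq 2$. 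Thus $L$ is aperiodic.

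The main technical points are showing that the period subgroups of the $R_i$ are exactly the obvious cyclic subgroups (via the coset-counting argument above), and verifying the aperiodicity of the short interval $\ENTIERS{q_1}$ in the larger cyclic group $\Zn{q_1 q_2}$; the rest is arithmetic bookkeeping, with the key semi-hidden input being $p_2 \neq q_2$ (automatic in each of the three target cases), which makes the Chinese Remainder decomposition genuinely available.
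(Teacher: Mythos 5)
Your proof is correct. For $R_1$ and $R_2$ it is essentially the paper's argument in a slightly different dress: the paper also observes that the period subgroup of $R_i$ has order a multiple of $q_2$ (resp.\ $p_2$) dividing $\CARD{R_i}=p_2q_2$, so that a common period would force one of them to have period $p_1q_1$, and rules this out by exhibiting $p_1\in R_1$ and $q_1\in R_2$ outside $\ENGENDRE{p_1q_1}$ --- exactly your coset-counting step, phrased without naming the subgroups $H_1,H_2$. Where you genuinely diverge is the aperiodicity of $L$: the paper enumerates the three candidate periods $p_2q_2$, $p_1p_2q_2$, $p_2q_2q_1$ (corresponding to the possible orders $p_1q_1$, $q_1$, $p_1$ of the period subgroup) and kills each by an ad hoc element computation, whereas you pass to the decomposition $\Z\cong\Zn{q_1q_2}\times\Zn{p_1p_2}$, identify $L$ with the product set $\ENTIERS{q_1}\times\ENTIERS{p_1}$, and reduce to the aperiodicity of a short interval in each factor; this is cleaner and replaces the paper's somewhat opaque middle case (the step ``for all $k\in\ENTIERS{p_1}$, $p_1\wedge q_2q_1k=1$'') by a transparent statement about product sets. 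A further merit of your write-up is that you make explicit the hypothesis $p_2\neq q_2$, which the section's standing assumption $p_1p_2\wedge q_1=q_1q_2\wedge p_1=1$ does not by itself imply but which both proofs need (if $p_2=q_2$ then $p_1p_2q_1=p_1q_1q_2$ and $R_1,R_2$ visibly share that period), and you correctly verify that it can be arranged in each of the three target factorization shapes of $n$.
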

	
	\begin{proof}
	By definition,~$R_1$ and~$R_2$ are periodic in~$\Z$ with respectively period~$p_1 p_2 q_1$ 
	and~$p_1 q_1 q_2$. Since~$\CARD{R_1}=\CARD{R_2}=p_2 q_2$, if~$R_1$ and~$R_2$ share a common 
	period then either~$R_1$ or~$R_2$ has period~$p_1q_1$. 
	Suppose that~$p_1q_1$ is a period of~$R_1$ then~$R_1 = p_1q_1 \ENTIERS{p_2q_2}$ in~$\Z$, which is 
	impossible since~$p_1 = 0 + p_1 \in R_1$ and~$p_1 \not\in p_1q_1 \ENTIERS{p_2q_2} = R_1$.
	Similarly, we prove that~$p_1q_1$ is not a period of~$R_2$. 
	Thus~$R_1$ and~$R_2$ are periodic in~$Z_n$ without common period.
	
	Suppose that~$L$ is periodic in~$\Z$ with period~$g$. 
	Since~$\CARD{L} = p_1q_1$,~$g \in \ENS{p_2q_2, p_2q_2p_1, p_2q_2q_1}$.
	If~$g = p_2q_2$ then~$L = p_2q_2\ENTIERS{p_1q_1}$ in~$\Z$ but~$q_1q_2 \in L$ 
	and~$q_1q_2 \not\in p_2q_2\ENTIERS{p_1q_1}$ thus~$g \neq p_2q_2$.
	Moreover, since for all~$k\in\ENTIERS{p_1}$,~$p_1\wedge q_2q_1k =1$ 
	then there is no~$L'$ such that~$L = L' + p_2q_2p_1\ENTIERS{q_1}$
	and thus~$g \neq p_2q_2p_1$.
	Similarly, we prove that~$g \neq p_2q_2q_1$.
	
	This concludes the proof.	
	\end{proof}
	
	We build a cbc over theses factorizations.	
	Let~$Y$ be a cbc
	\begin{equation*}
		\sum\limits_{\ell \, \in \, L} a^{\ell} b a^{D_\ell},
	\end{equation*}
	where~$D_{(l \in L)} \in \ENS{R_1, R_2}$ and where~$\ell_1, \ell_2 \in L$ 
	be such that~$D_{\ell_1} = R_1$ and~$D_{\ell_2} = R_2$.

	\begin{proposition} \label{non_Hajos:prop:CE}
		The set~$Y$ is a non-Haj\'{o}s cbc.
	\end{proposition}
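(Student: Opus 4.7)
The plan is to argue by contradiction. Suppose $Y$ is a Haj\'{o}s cbc; by Theorem~\ref{Theorem_carac_periodic} its stable set $\ENS{Y}^\circ$ admits some Krasner factorization $(U,V)$ as a border, which by Proposition~\ref{prop:bord_trans} I may normalize so that $0 \in U \cap V$.

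I will then unfold the border equation $a^U \SOM{Y} a^V \Mod \sum_{i,j \in \entiers} a^i b a^j$. Since $\SOM{Y} = \sum_{\ell \in L} a^{\ell} b a^{D_\ell}$ with every $D_\ell$ equal to $R_1$ or $R_2$, both of cardinality $p_2 q_2$, the coefficient of $a^i b a^j$ in $a^U \SOM{Y} a^V$ modulo $n$ is $\sum_{\ell \in L} [i - \ell \in U] \cdot \CARD{\ENS{v \in V : j - v \in D_\ell}}$, and this must equal $1$ for every $(i,j) \in \entiers^2$. Summing over $j$ with $i$ fixed yields $\CARD{V} \cdot \CARD{\ENS{\ell \in L : i - \ell \in U}} \cdot p_2 q_2 = n$, forcing $\CARD{V} = \CARD{L} = p_1 q_1$ and hence $\CARD{U} = p_2 q_2$; moreover $(U, L)$ is a factorization. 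Given this, for each $\ell \in L$ the pointwise condition forces $\CARD{\ENS{v \in V : j - v \in D_\ell}} = 1$ for every $j$, i.e.\ $(D_\ell, V)$ is a factorization of size $n$. Specializing to $\ell = \ell_1$ and $\ell = \ell_2$ produces factorizations $(R_1, V)$ and $(R_2, V)$.

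Now I invoke the explicit Krasner structure~\eqref{Enum_Krasner}: one of $U$ or $V$ must be $m$-periodic for some $1 < m < n$ (the degenerate cases $\CARD{U} = 1$ or $\CARD{V} = 1$ are incompatible with the cardinalities above and are discarded separately). If $U$ is $m$-periodic, Lemma~\ref{Lemma_krasner_factorization} applied to $(R_1, V)$ and to $(R_2, V)$ forces both $R_1$ and $R_2$ to be $m$-periodic, contradicting the part of Proposition~\ref{non_hajos:prop:period_commune_R} which asserts that $R_1$ and $R_2$ share no common period. If instead $V$ is $m$-periodic, I apply the symmetric half of Theorem~\ref{th:bordante}: $(U, \L{0}{n}{Y a^V})$ is a factorization, and since $0 \in V \cap R_1 \cap R_2 \subseteq V \cap D_\ell$ for every $\ell$, one has $\L{0}{n}{Y a^V} \supseteq L$; the cardinality identity $\CARD{\L{0}{n}{Y a^V}} = n / \CARD{U} = \CARD{L}$ then gives $\L{0}{n}{Y a^V} = L$, and Lemma~\ref{Lemma_krasner_factorization} forces $L$ itself to be $m$-periodic, contradicting the remaining assertion of Proposition~\ref{non_hajos:prop:period_commune_R}.

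The main obstacle I anticipate lies in the rigidity step of the second paragraph: showing that the Krasner border is tight enough to impose $\CARD{V} = \CARD{L}$ and to promote $(R_1, V)$, $(R_2, V)$, and $(U, L)$ to honest factorizations. The danger is that convolution by $a^U$ on the left could \emph{a priori} inflate a row $\R{\ell_i}{n}{a^U Y}$ beyond $R_i$ by receiving contributions from other $\ell \in L$; exploiting the uniform cardinality $p_2 q_2 = \CARD{D_\ell}$ together with the coprimality hypotheses $p_1 p_2 \wedge q_1 = q_1 q_2 \wedge p_1 = 1$ to rule this out cleanly is the technical core of the argument.
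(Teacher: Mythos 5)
Your argument follows essentially the same route as the paper's proof: extract a Krasner border $\PAR{U,V}$ via Theorem~\ref{Theorem_carac_periodic}, show that $\PAR{U,L}$, $\PAR{R_1,V}$ and $\PAR{R_2,V}$ are factorizations, and contradict Proposition~\ref{non_hajos:prop:period_commune_R} through Lemma~\ref{Lemma_krasner_factorization}. The step you rightly flag as the technical core is the only place where your write-up does not yet close: summing the coefficient identity over $j$ only yields $\CARD{\ENS{\ell \in L : i-\ell \in U}} \cdot \CARD{V} = p_1 q_1$, which does \emph{not} by itself force the multiplicity to be~$1$ (a priori several rows could jointly tile $\Zn{n}$ against $V$ without any single one doing so). The clean patch uses a tool you already invoke in your second case: by~\eqref{Enum_Krasner} one has $0 \in U \cap V$ (no translation via Proposition~\ref{prop:bord_trans} is needed, and translating could destroy the Krasner property anyway), and $0 \in R_1 \cap R_2$, so every $\ell \in L$ with $i - \ell \in U$ modulo $n$ contributes at least one occurrence of $a^i b a^0$ to $a^U \SOM{Y} a^V$; unambiguity of the border therefore forces exactly one such $\ell$, so $\PAR{U,L}$ is a factorization, each row $\R{i}{n}{a^U Y}$ is a single $D_\ell$, and $\PAR{R_1,V}$, $\PAR{R_2,V}$ are factorizations by Theorem~\ref{th:bordante}. (The paper asserts these three factorizations without spelling this out.) With that secured, your two-case analysis is correct; note only that the paper bypasses the case ``$V$ periodic'' entirely by citing Theorem~3.2 of~\cite{de1999hajos}: $\PAR{U,L}$ is then a Haj\'{o}s factorization, and since $L$ is not periodic, $U$ must be, which lands directly in your first case. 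Your symmetric treatment of the second case, applying Lemma~\ref{Lemma_krasner_factorization} to $\PAR{L,U}$ to force $L$ periodic, is nonetheless valid.
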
	
	
	\begin{proof}
		Suppose that~$Y$ is a Haj\'{o}s cbc then it is bordered by a Krasner factorization~$\PAR{U,V}$.
		In particular,~$\PAR{U,L}, \PAR{R_1, V},$ and~$\PAR{R_2, V}$ must be factorizations.
		According to Theorem~3.2 of~\cite{de1999hajos},~$\PAR{U,L}$ is of Haj\'{o}s and 
		since~$L$ is not periodic then~$U$ is periodic.
		Moreover, according to Lemma~\ref{Lemma_krasner_factorization},
		the sets~$U,R_1,$ and~$R_2$ 
		must share a common period which is contradicted by 
		Proposition~\ref{non_hajos:prop:period_commune_R}.		 
	\end{proof}
	
	Proposition~\ref{non_Hajos:prop:CE} implies that 
	numbers of the form~$p_1^2q_1^2, p_1p_2q_1^2$, and~$p_1p_2q_1q_2$,
	where~$p_1,p_2,q_1,q_2$ are distinct primes, 
	are of non-cbc Haj\'{o}s numbers.

	\begin{example}
	According to Proposition~\ref{non_Hajos:prop:CE}, the $36$-cbc 
	\begin{equation*}
	\ENS{a^{36}} \cup ba^{\ENS{0,2,12,14,24,26}} \cup 
	a^{\ENS{4,8,9,13,17}}ba^{\ENS{0,3,6,18,21,24}} 
	\end{equation*}
	is not of Haj\'{o}s. Similarly to Proposition~\ref{non_Hajos:prop:CE}, we can show that the code
	\begin{equation*}
	\ENS{a^{36}} \cup 
	a^{\ENS{0,4,8,9,13,17}}ba^{\ENS{0,2,12,14,24,26}} \cup 
	a^{\ENS{0,4,8,9,13,17}}ca^{\ENS{0,3,6,18,21,24}} 
	\end{equation*}
	over the alphabet~$\ENS{a,b,c}$ is not of Haj\'{o}s.
	If one of them is included in a finite maximal code then it would not be 
	bordered by a Krasner factorization
	and thus it would provide a counterexample to
	the factorization conjecture.
	\end{example}

	Thanks to Corollaries~\ref{coro:trois_premiers} and~\ref{coro:p_k_q} and
	Propositions~\ref{non_hajos_non_cbc_hajos} and~\ref{non_Hajos:prop:CE}, we conclude 
	this section 
	by providing the exhaustive list of cbc Haj\'{o}s numbers.
	
	\begin{theorem} \label{cbc_hajos_numbers:theorem}
	Cbc Haj\'{o}s numbers are product of at most three primes or numbers of the form~$p^kq$, 
	where~$k \geq 0$ and~$p,q$ are primes. 
	\end{theorem}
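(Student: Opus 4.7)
The plan is to prove the theorem as a direct synthesis of the results already established in this section; both directions reduce to an appropriate case analysis on the prime factorization of~$n$.

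For the forward direction, suppose $n$ is a product of at most three primes or has the form $p^k q$ with $p, q$ prime. In the first case, Corollary~\ref{coro:trois_premiers} immediately gives that $n$ is a cbc Haj\'{o}s number; in the second case, Corollary~\ref{coro:p_k_q} does the same.

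For the converse direction, assume that $n$ is neither a product of at most three primes nor of the form $p^k q$; this means $n$ has at least four prime factors (counted with multiplicity) and is not of the form $p^k q$. I would then distinguish two cases. First, if $n$ has at least five prime factors with multiplicity, then since $n$ is not of the form $p^k q$ it fails both defining conditions of a Haj\'{o}s number (product of at most four primes, or of form $p^k q$). Hence $n$ is non-Haj\'{o}s and Proposition~\ref{non_hajos_non_cbc_hajos} yields that $n$ is not a cbc Haj\'{o}s number. Second, if $n$ has exactly four prime factors with multiplicity, the shapes $p^4$ and $p^3 q$ are already of the form $p^k q$ and hence excluded; the remaining possibilities are $n = p_1^2 q_1^2$, $n = p_1 p_2 q_1^2$, and $n = p_1 p_2 q_1 q_2$ with the relevant primes distinct. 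Each satisfies the coprimality hypothesis $p_1 p_2 \wedge q_1 = q_1 q_2 \wedge p_1 = 1$ of Proposition~\ref{non_Hajos:prop:CE}, which therefore produces a non-Haj\'{o}s $n$-cbc, so $n$ is not a cbc Haj\'{o}s number.

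There is no real obstacle here: all the non-trivial work has already been done in establishing the two corollaries and the two propositions that are cited. The only care required is in the enumeration of shapes for $n$ with exactly four prime factors, to verify that the possibilities not already absorbed by $p^k q$ are exactly the three shapes handled by Proposition~\ref{non_Hajos:prop:CE}.
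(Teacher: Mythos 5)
Your proof is correct and takes essentially the same approach as the paper, which presents the theorem as an immediate synthesis of Corollaries~\ref{coro:trois_premiers} and~\ref{coro:p_k_q} together with Propositions~\ref{non_hajos_non_cbc_hajos} and~\ref{non_Hajos:prop:CE}. Your explicit enumeration of the exactly-four-prime-factor shapes not absorbed by~$p^kq$ (namely~$p_1^2q_1^2$, $p_1p_2q_1^2$, and~$p_1p_2q_1q_2$) is precisely the case analysis the paper leaves implicit, and like the paper it relies on the known classification of non-Haj\'{o}s numbers for the remaining case.
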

	
   	Smallest non-cbc Haj\'{o}s numbers 
   	are therefore~$2^2 3^2 = 36$,~$2^2 3 \times 5 = 60$, and~$2^3 3^2 = 72$. 
   	It is referred as sequence~A320632 in~\cite{oeis2}.
	
    \section{Prefix-suffix codes}
    
    We recall that given two codes~$C_1$ and~$C_2$ over the 
    alphabet~$\Alpha$, the code~$C_1$ is said to be a \DEF{composition} of~$C_2$
	if and only if~$C_1$ is a code over the alphabet~$C_2$ (i.e.~$C_1 \subseteq {C_2}^*$).
	For example, the code
	\begin{equation} \label{exemple_code_prefix-suffix}
		\ENS{aa, ab, abbab, bbaa}
	\end{equation}
	is a composition of the 
	code~$\ENS{aa,ab,b}$ since it is equal 
	to
	\begin{equation*}
		\ENS{aa, ab, \PAR{ab}\PAR{b}\PAR{ab}, \PAR{b}\PAR{b}\PAR{aa}}.
	\end{equation*}
	Of course, a code is always a composition of himself and of its alphabet.
	
	We recall that according to Proposition~2.6.1 from~\cite{berstel2010codes},
	if~$C_2$ is a code (over~$\Alpha$) and~$C_1$ is a code over~$C_2$ 
	then~$C_1$ is a code over~$\Alpha$.  
	A code~$C_1$ is recursively said to be a \DEF{prefix-suffix} code over~$C_2$ 
	if it is equal to~$C_2$
	or if it is a prefix or suffix code over a \textit{prefix-suffix} code over~$C_2$.	
	For example, the code~\eqref{exemple_code_prefix-suffix} is a prefix code 
	over
	\begin{equation*}
		\ENS{ aa, ab, abb, bb }
	\end{equation*}		
	which is a suffix code
	over
	\begin{equation*}
		\ENS{ aa, ab, b }
	\end{equation*}		
	which is again a prefix code over~$\Alpha$.
	 Thus~\eqref{exemple_code_prefix-suffix} is a 
	prefix-suffix code (over~$\Alpha$).
	We simply say \textit{prefix-suffix code} when it is a prefix-suffix code over~$\Alpha$.
	Prefix-suffix codes are included in finite maximal codes according 
	to Corollary~$1$ from~\cite{restivo1989completing}.    
    
    We have the following proposition, inspired by Lemmas~$3.3$ and~$3.4$ from~\cite{Lam}.
	
	\begin{lemma} \label{completion:prop:hajos}
	If~$C$ is a code containing~$a^n$ then	
	for any~$i_{1}(\omega), \dots, i_{t}(\omega),j_{1}(\omega), \dots, j_{t}(\omega) \geq 0$, 
	where~$\omega \in C\setminus \ENS{a^n}$, the set
	\begin{equation} \label{completion:prop:composition}
		\ENS{a^{nt}} \cup 
		\bigsqcup\limits_{\omega \in C\setminus a^n} 
		\ENS{
		a^{n i_{1}(\omega)} \omega a^{n t j_{1}(\omega)},
		a^{n i_{2}(\omega)} \omega a^{n (1+t j_{2}(\omega))} , \dots, 
		a^{n i_{t}(\omega)} \omega a^{n (t-1+t j_{t}(\omega))}} 
	\end{equation}		
	is a prefix-suffix code over~$C$.
	\end{lemma}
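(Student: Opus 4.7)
The plan is to exhibit the set~\eqref{completion:prop:composition}, which I will denote $S$, as a prefix code over an intermediate code $T$ that is itself a suffix code over $C$; by the recursive definition this immediately certifies $S$ as a prefix-suffix code over $C$.

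Set $\beta := a^{nt}$ and $u_k(\omega) := a^{n i_k(\omega)}\, \omega\, a^{n(k-1)}$, and take
\begin{equation*}
T := \ENS{\beta} \cup \ENS{u_k(\omega) \,:\, \omega \in C \setminus \ENS{a^n},\ k = 1, \dots, t}.
\end{equation*}
Since $C$ is a code, each element of $T$ lies in $C^*$ with a unique $C$-factorization: namely $\beta = \PAR{a^n}^t$ and $u_k(\omega) = \PAR{a^n}^{i_k(\omega)}\, \omega\, \PAR{a^n}^{k-1}$. I would first verify that $T$ is a suffix code over $C$ by inspecting these $C$-decompositions. The key observation is that in $u_k(\omega)$ the letter $\omega \neq a^n$ sits at exactly the $k$-th position from the right, whereas $\beta$ is a pure power of $a^n$. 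Since $k \leq t$, the letter $\omega$ always lies among the last $t$ $C$-letters of $u_k(\omega)$, which rules out $\beta$ being a suffix of $u_k(\omega)$; the converse direction is immediate for length reasons. Comparing $u_k(\omega)$ with $u_{k'}(\omega')$, reading the $C$-letter at position $k$ from the right gives $\omega$ on one side and either $a^n$ (when $k < k'$) or $\omega'$ (when $k = k'$) on the other, forcing $(k,\omega) = (k',\omega')$.

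The second step is to rewrite $S$ over the alphabet $T$. Each non-$\beta$ element factors as
\begin{equation*}
a^{n i_k(\omega)}\, \omega\, a^{n\PAR{k-1 + t\, j_k(\omega)}} \;=\; u_k(\omega) \cdot \beta^{j_k(\omega)},
\end{equation*}
while $a^{nt}$ is the $T$-letter $\beta$ itself. Every non-$\beta$ element of $S$ therefore begins with the $T$-letter $u_k(\omega)$ followed by $j_k(\omega)$ copies of $\beta$. Since the initial $T$-letters $u_k(\omega)$ are pairwise distinct (by the first step) and all distinct from $\beta$, comparing first $T$-letters shows directly that $S$ is a prefix code over $T$.

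Composing the two stages, $S$ is a prefix code over a suffix code over $C$, and so is a prefix-suffix code over $C$. I expect the only delicate bookkeeping to occur in the suffix verification for $T$: it depends entirely on locating the unique non-$a^n$ $C$-letter $\omega$ inside each $u_k(\omega)$ and tracking its position from the right, all of which is legitimized by the fact that $C$ itself is a code. Everything in the second stage then collapses to comparisons of a single $T$-letter.
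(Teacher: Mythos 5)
Your proof is correct and follows essentially the same route as the paper: the intermediate code $T$ you construct is exactly the paper's suffix code over~$C$, and the final set is then read as a prefix code over~$T$, just as in the paper (you merely spell out the verifications the paper leaves implicit). One cosmetic nit: ruling out $u_k(\omega)$ being a suffix of $\beta$ is not a matter of length but of the fact that every $C$-letter of $\beta$ is $a^n \neq \omega$ --- the same positional argument you already use elsewhere.
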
	
	
	\begin{proof}
		The set
		\begin{equation} \label{completion:prop:composition_1}
		\ENS{\PAR{a^{n}}^t} \cup 
		\bigsqcup\limits_{\omega \in C\setminus a^n} 
		\ENS{\PAR{a^{n}}^{i_{1}(\omega)} \omega,
		\PAR{a^{n}}^{i_{2}(\omega)} \omega \PAR{a^{n}}, \dots, 
		\PAR{a^{n}}^{i_{t}(\omega)} \omega \PAR{a^{n}}^{t-1}}
		\end{equation}
		is a suffix code over the code~$C$ and
		\begin{equation*} 
		\ENS{a^{nt}} \cup 
		\bigsqcup\limits_{\omega \in C \setminus a^n} 
		\ENS{\PAR{a^{n i_{1}(\omega)} \omega} \PAR{a^{nt}}^{j_{1}(\omega)},
		\PAR{a^{n i_{2}(\omega)} \omega a^n} \PAR{a^{n t}}^{j_{2}(\omega)}, \dots, 
		\PAR{a^{n i_{t}(\omega)} \omega a^{n (t-1)}} \PAR{a^{nt}}^{j_{t}(\omega)}}
		\end{equation*}
		is a prefix code over the code~\eqref{completion:prop:composition_1}.
		Thus the code~\eqref{completion:prop:composition} is 
		a prefix-suffix code over~$C$.
	\end{proof}

	We deduce from this lemma a theorem about completion.	
	
	\begin{theorem} \label{completion:th1}
		Let~$\EnsCal{E}$ be a Haj\'{o}s 
		compatible set of~$n$-cbc, where~$n > 1$, and~$C := \ENS{a, \omega_1, \dots, \omega_k}$ 
		a prefix-suffix code. 
		For any~$X_1, \dots, X_k \subseteq a^*ba^*$		
		such that~$\MOD{X_1}, \dots, \MOD{X_k} \in \EnsCal{E}$, the set
		\begin{equation} \label{completion:th:eq:alphabet}
			\ENS{a^n} \cup \bigsqcup\limits_{i = 1}^{k} 
			X_i[b \leftarrow \omega_i],
		\end{equation}	
		where~$X[b \leftarrow \omega]$ is the set of words~$X$ whose letters~$b$ 
		are replaced by word~$\omega$,	
		is a prefix-suffix code and thus it is		
		included in a finite maximal code. 
	\end{theorem}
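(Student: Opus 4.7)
The plan is to induct on the depth of the Haj\'os decomposition of $\EnsCal{E}$, building the target code one layer at a time by iterated application of Lemma~\ref{completion:prop:hajos}.

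By the definition of Haj\'os compatible set, there exist integers $t_1, \dots, t_d > 1$ with $n = t_1 \cdots t_d$, together with cbc $Y^{(1)}, \dots, Y^{(d-1)}$ such that, after possibly replacing $\EnsCal{E}$ by $\DUAL{\EnsCal{E}}$, every $Y \in \EnsCal{E}$ satisfies the chain
\begin{equation*}
Y \in H_{t_d}\PAR{Y^{(d-1)}}, \ \DUAL{Y^{(d-1)}} \in H_{t_{d-1}}\PAR{Y^{(d-2)}}, \ \dots, \ \DUAL{Y^{(1)}} \in H_{t_1}\PAR{\ENS{b}};
\end{equation*}
by Remark~\ref{remarque:hajos_modulo} one has $Y^{(i)} = \MODn{Y}{m_i}$, where $m_i := t_1 \cdots t_i$ (with the convention $m_0 := 1$).

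The construction will be recursive. Starting from $M_0 := C$, which is prefix-suffix over~$C$ and contains $a^{m_0} = a$, I build a sequence $M_0, M_1, \dots, M_d$ of prefix-suffix codes over~$C$ such that each $M_i$ contains $a^{m_i}$ and has the form
\begin{equation*}
M_i = \ENS{a^{m_i}} \cup \bigsqcup_{j = 1}^{k} X_j^{(i)}[b \leftarrow \omega_j],
\end{equation*}
where $X_j^{(i)} \subseteq a^* b a^*$ reduces modulo $m_i$ to the $i$-th level of the Haj\'os tower of $\MOD{X_j}$. To pass from $M_{i-1}$ to $M_i$, I apply Lemma~\ref{completion:prop:hajos} with the lemma's parameters $n, t, C$ taken to be $m_{i-1}, t_i$, and $M_{i-1}$ respectively: the result is a prefix-suffix code over $M_{i-1}$ containing $a^{m_i}$, and hence, by transitivity of the prefix-suffix composition, a prefix-suffix code over~$C$. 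The free parameters $i_\ell(\omega), j_\ell(\omega)$ of the lemma match exactly the degrees of freedom of the $H_{t_i}$ operation, so they can be chosen level by level in order to reproduce the specified words of $X_j$ modulo $m_i$. Layers of the Haj\'os decomposition whose periodicity is on the left rather than on the right are handled in the same manner after passing to the dual. At $i = d$ the construction outputs the target $\ENS{a^n} \cup \bigsqcup_j X_j[b \leftarrow \omega_j]$.

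The main technical obstacle I expect is the bookkeeping needed to check that the free parameters of Lemma~\ref{completion:prop:hajos} really cover every left and right exponent arrangement permitted by $H_{t_i}$, and that the possibly alternating dualization in the Haj\'os tower stays consistent with the prefix/suffix roles built into the lemma's construction. Once the code is shown to be prefix-suffix over the alphabet, its inclusion in a finite maximal code follows at once from Corollary~$1$ of~\cite{restivo1989completing}.
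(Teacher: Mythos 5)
Your proposal is correct and follows essentially the same route as the paper: both proofs peel the Haj\'{o}s tower of $\EnsCal{E}$ one layer $H_{t_i}$ at a time and apply Lemma~\ref{completion:prop:hajos} at each layer (dualizing for left-periodic levels), using transitivity of prefix-suffix composition; the only difference is that you unroll this as a bottom-up iteration from $C$ while the paper phrases it as a top-down recurrence on $n$. The bookkeeping you flag as a remaining obstacle (matching the lemma's free parameters $i_\ell(\omega), j_\ell(\omega)$ to the exponent choices in $H_{t_i}$) is likewise left implicit in the paper's proof and does go through.
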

	
	\begin{proof}
	We prove it by a recurrence on~$n$.
	
	Since~$\EnsCal{E}$ is of Haj\'{o}s then there exists
	a compatible set~$\EnsCal{E'}$ of~$m$-cbc such that~$n=mt, t > 1,$ 
	and~$\EnsCal{E} \in \EnsCal{H}_{t}\PAR{\EnsCal{E}'}$ 
	or~$\DUAL{\EnsCal{E}} \in \EnsCal{H}_{t}\PAR{\EnsCal{E}'}$.
	We can assume that~$\EnsCal{E} \in \EnsCal{H}_{t}\PAR{\EnsCal{E}'}$, the other case is similar.
	Therefore, for all~$X_1, \dots, X_k \in \EnsCal{E}$, 
	there exists~$Y_1, \dots, Y_k \in \EnsCal{E}'$ such that~$X_i \in H_{t}\PAR{Y_i}$ 
	for all~$i \in [1,k]$.
	
	If~$m=1$ then~$X_i \in H_{t}\PAR{\ENS{b}}$, for all~$i \in [1,k]$. 
	Moreover, since~$C$ is a prefix-suffix code then
	according to Proposition~\ref{completion:prop:hajos},
	the set~\eqref{completion:th:eq:alphabet} is also a 
	prefix-suffix code.	
	
	Otherwise (when~$m>1$) we can assume, by recurrence hypothesis, that
	\begin{equation*} 
			\ENS{a^m} \cup \bigsqcup\limits_{i = 1}^{k} 
			Y_i[b \leftarrow \omega_i]
	\end{equation*}	
	is a prefix-suffix code and thus 
	according to Proposition~\ref{completion:prop:hajos},
	the set~\eqref{completion:th:eq:alphabet} is also a 
	prefix-suffix code.	
	\end{proof}
	
	Theorem~$3.2$ from~\cite{Lam} is the particular case of Theorem~\ref{completion:th1} 
	where~$C = \ENS{a,b}$ and~$\EnsCal{E}$ is made of one cbc of the form~$a^Pba^Q$.
	Note that our alphabet~$\Alpha$ does not have to be binary.
	
	Our next corollary is a small step towards the inclusion problem.	
	
	\begin{corollary}
		Let~$n$ be a cbc Haj\'{o}s number,~$\omega \in \Alpha^*\setminus a^*$, 
		and~$X \subseteq a^* \omega a^*$.
		Considering the set~$\ENS{a^n} \cup X$, the following statements are equivalent:
		\begin{enumerate}
			\item it is included in a finite maximal code,
			\item $C_X\PAR{\omega}$ is included in an~$n$-cbc,
			\item $C_X\PAR{\omega}$ is included in an~$n$-Haj\'{o}s cbc,
			\item it is a prefix-suffix code.
		\end{enumerate}
	\end{corollary}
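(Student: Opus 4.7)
The plan is to prove the four statements equivalent by the cycle $(4) \Rightarrow (1) \Rightarrow (2) \Rightarrow (3) \Rightarrow (4)$. The first three implications are short. For $(4) \Rightarrow (1)$, I invoke the fact recalled at the start of Section~6 that every prefix-suffix code embeds in a finite maximal code. For $(1) \Rightarrow (2)$, if $\{a^n\} \cup X \subseteq M$ with $M$ finite maximal, then $(\{a^n\} \cup X)^* \subseteq M^*$ forces $C_X(\omega) \subseteq C_M(\omega)$, and $C_M(\omega)$ is an $n$-cbc because its cardinality equals $n$ by Theorem~\ref{Preliminaire:th:schutz} and $\{a^n\} \cup C_M(\omega)$ is a code by Proposition~\ref{prop:cbc}. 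For $(2) \Rightarrow (3)$, given an $n$-cbc $Y \supseteq C_X(\omega)$, the singleton $\{Y\}$ is automatically compatible: by Proposition~\ref{prop:algo_compatibilite} this reduces to $\mathcal{G}_n(\{Y\})$ having no non-empty cycle through~$0$, which by the footnote of that proposition is equivalent to $\{a^n\} \cup Y$ being a code. Since $n$ is a cbc Haj\'os number, $\{Y\}$ is thus of Haj\'os, and unravelling the singleton case of the definition yields that $Y$ itself is a Haj\'os cbc.

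The substantive step is $(3) \Rightarrow (4)$. I first check that $\{a, \omega\}$ is a prefix-suffix code whenever $\omega \notin a^*$: stripping trailing (respectively leading) letters $a$ from $\omega$ one at a time, each strip exhibits $\{a, \omega\}$ as a prefix (resp.\ suffix) code over $\{a, \omega'\}$, and once $\omega'$ has no outer $a$ it is a prefix and suffix code directly over $\Alpha$. Because $\omega \notin a^*$, each $x \in X$ admits a unique factorization $x = a^{i(x)} \omega a^{j(x)}$, so the set $X_1 := \{a^{i(x)} b a^{j(x)} : x \in X\} \subseteq a^*ba^*$ is well defined. A short argument shows $\MOD{X_1} = C_X(\omega)$: the word $\omega$ appears exactly once as a factor in any $a^i \omega a^j$ (its first non-$a$ letter pins its position), so any factorization of $a^i \omega a^j$ in $(\{a^n\} \cup X)^*$ uses exactly one element of $X$ with the remaining factors being $a^n$-blocks, and the corresponding reduced pair matches.

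I then pad $X_1$ by setting $X_1' := X_1 \cup Y$, which still lies in $a^*ba^*$ and satisfies $\MOD{X_1'} = \MOD{X_1} \cup Y = C_X(\omega) \cup Y = Y$. Applying Theorem~\ref{completion:th1} with $\EnsCal{E} := \{Y\}$, the prefix-suffix code $C := \{a, \omega\}$ and the single family $X_1'$, the set
\begin{equation*}
\{a^n\} \cup X_1'[b \leftarrow \omega] = \{a^n\} \cup X \cup Y[b \leftarrow \omega]
\end{equation*}
is a prefix-suffix code. A routine induction on the recursive definition shows that any subset of a prefix-suffix code over $C_0$ is itself a prefix-suffix code over $C_0$; since $\{a^n\} \cup X$ is such a subset, $(4)$ follows.

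The main obstacle is the mismatch between Theorem~\ref{completion:th1}, which requires the exact membership $\MOD{X_1} \in \EnsCal{E}$, and the weaker inclusion $C_X(\omega) \subseteq Y$ in statement~$(3)$. The padding $X_1 \cup Y$ bridges this by proving a stronger statement for a larger code and inheriting the conclusion by restriction. The remaining side case $n = 1$ is excluded from Theorem~\ref{completion:th1}, but then the unique $1$-cbc is $\{b\}$ so~$(3)$ is automatic, and the other items in that case can be handled directly by inspecting $\{a\} \cup X$.
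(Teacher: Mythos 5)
Your proposal is correct and follows essentially the same route as the paper: the same cycle of implications, with the key step $(3)\Rightarrow(4)$ obtained by applying Theorem~\ref{completion:th1} to the two-element prefix-suffix code~$\ENS{a,\omega}$ and the Haj\'{o}s cbc~$Y$ containing~$C_X\PAR{\omega}$. The only differences are details the paper leaves implicit --- you prove directly that~$\ENS{a,\omega}$ is prefix-suffix instead of citing Theorem~3 of~\cite{restivo1989completing}, and your padding~$X_1\cup Y$ together with the observation that subsets of prefix-suffix codes are prefix-suffix makes rigorous the paper's bare assertion that~$\ENS{a^n}\cup X\subseteq\ENS{a^n}\cup Y[b\leftarrow\omega]$ is prefix-suffix (an inclusion that, read literally, fails when exponents in~$X$ exceed~$n$).
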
		
	
	\begin{proof}
		Statement~$1$ implies Statement~$2$ according to the recalls made in 
		Section~$1$ and
		Statement~$2$ implies Statement~$3$ since~$n$ is a cbc Haj\'{o}s number.
		
		Suppose that Statement~$3$ is true. 
		Let~$Y$ be a Haj\'{o}s cbc that contains~$C_X\PAR{\omega}$.
		The set~$\ENS{a, \omega}$ is a code since~$\omega \in \Alpha^*\setminus a^*$ 
		and it is prefix-suffix
		because any code with two elements is prefix-suffix according to 
		Theorem~$3$ from~\cite{restivo1989completing}. 
		Thus according to 
		Theorem~\ref{completion:th1},
		\begin{equation*}
			\ENS{a^n} \cup X \subseteq 
		\ENS{a^n} \cup Y[b \leftarrow \omega]
		\end{equation*}
		is a prefix-suffix code.	
		This proves that Statement~$3$ implies Statement~$4$.
		
		We recall that Statement~$4$ implies Statement~$1$ according 
		to Corollary~$1$ from~\cite{restivo1989completing}.
	\end{proof}
	
	For example, the code
	\begin{equation}
		\ENS{ aaab, aaba, b, ba }
	\end{equation}
	is not prefix-suffix (we do not prove it, we just did a computer check). 
	Thus if it is included in a finite maximal code 
	then it would not be bordered by a Krasner factorization and thus it would
	 provides a counterexample to the factorization conjecture.
	Such a code would contain a word of the form $a^n$ where $n$ is a non-cbc Haj\'{o}s number, 
	in particular~$n \geq 36$.

	\begin{remark}
		There is a converse to Theorem~\ref{completion:th1}. Indeed, any prefix-suffix code
		is included in a prefix-suffix finite maximal code. Such a code, let call it~$M$,
		satisfies the factorization conjecture, according to Proposition~14.1.2
		 from~\cite{berstel2010codes}. 
		Thus according to Proposition~\ref{prop:conj_facto_bord_krasner},~$C_M$ is bordered 
		by a Krasner factorization and thus it is of Haj\'{o}s according to
		 Theorem~\ref{Theorem_carac_periodic}.
	\end{remark}

	Next Theorem provides the best known bound for (the strong version of) the 
	long-standing \textit{triangle conjecture}. 
	
    \begin{theorem} \label{borne_conj_triangle}
    	The strong triangle conjecture is true for the particular cases 
    	where~$n$ is a cbc Haj\'{o}s number.
    \end{theorem}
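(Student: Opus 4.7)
My plan is to reduce the theorem to the claim that every Haj\'{o}s cbc satisfies the triangle property, and then to prove that claim by induction on the recursive construction of $H_n$. The reduction is quick: for a cbc Haj\'{o}s number $n$ and any $n$-cbc $X$, the singleton $\ENS{X}$ is a compatible set of $n$-cbc because the graph $\mathcal{G}_n\PAR{\ENS{X}}$ coincides with the Perrin--Sch\"{u}tzenberger code-testing graph for $\ENS{a^n}\cup X$, which is a code by definition of an $n$-cbc. Theorem~\ref{cbc_hajos_numbers:theorem} then forces $\ENS{X}$ to be of Haj\'{o}s, and unfolding the definition of a Haj\'{o}s compatible set gives $X \in H_n$.

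For the induction, the base $n = 1$ is immediate since the unique Haj\'{o}s $1$-cbc is $\ENS{b}$. In the inductive step, a Haj\'{o}s $n$-cbc $Y$ with $n = mt$, $t > 1$, satisfies $Y \in H_t(X)$ or $\DUAL{Y} \in H_t(X)$ for some $X \in H_m$; since dualization preserves word length, I would assume $Y \in H_t(X)$. The key move is to enumerate $X = \ENS{x_1, \dots, x_m}$ in non-decreasing length, which is harmless because the set $H_t(X)$ does not depend on the chosen enumeration of $X$. The inductive hypothesis then reads $|x_\ell| \geq \ell$, and unfolding the definition of $H_t$ shows that the elements of $Y$ have the form
\begin{equation*}
    y_{\ell,r} \;=\; a^{i_\ell + k_{\ell,r}\,m}\, b\, a^{j_\ell + (r-1)m}, \qquad \ell \in [1,m],\ r \in [1,t],
\end{equation*}
for some $k_{\ell,r} \in \ENTIERS{t}$, so that $|y_{\ell,r}| \geq \ell + (r-1)m$.

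To conclude, for each $k \in \entiers$ I would bound
\begin{equation*}
    \CARD{\ENS{y \in Y : |y| \leq k}} \;\leq\; \CARD{\ENS{(\ell,r) \in [1,m]\times[1,t] : \ell + (r-1)m \leq k}}
\end{equation*}
and check by an elementary staircase count (writing $k = qm + s$ with $0 \leq s < m$, the columns $r = 1, \dots, q$ each contribute $m$ admissible points and the column $r = q+1$ contributes $s$) that the right-hand side equals $k$ exactly. The hard part is really just noticing that the enumeration freedom of $H_t(X)$ allows sorting $X$ by length, which turns the inductive hypothesis into a pointwise estimate; once this is done, the remaining bound is a clean majorization and no further factorization-theoretic machinery is needed.
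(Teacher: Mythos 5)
Your proof is correct, but it takes a genuinely different route from the paper for the main step. The paper's proof is two lines: it invokes Theorem~\ref{completion:th1} to conclude that any $n$-cbc with $n$ a cbc Haj\'{o}s number is a prefix-suffix code, and then cites Example~14.6.1 and Proposition~14.6.3 of~\cite{berstel2010codes}, according to which prefix-suffix cbc satisfy the triangle property. You instead bypass the prefix-suffix machinery entirely and prove directly, by induction on the recursive structure of~$H_n$, that every Haj\'{o}s cbc satisfies the triangle property: sorting the base cbc $X$ by non-decreasing length turns the inductive hypothesis into the pointwise bound $|x_\ell| \geq \ell$, the definition of $H_t$ gives $|y_{\ell,r}| \geq \ell + (r-1)m$, and the staircase count (which I checked: writing $k = qm+s$ the admissible pairs number exactly $qm+s=k$) closes the argument. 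Your version is self-contained and elementary, and it makes transparent \emph{why} the triangle property holds for Haj\'{o}s cbc; the paper's version is shorter given its earlier results and yields strictly more, namely that $X \cup \ENS{a^n}$ is prefix-suffix and hence embeddable in a finite maximal code. Two minor imprecisions in your write-up, neither fatal: the reduction step (singleton compatibility, hence $X \in H_n$ via Theorem~\ref{cbc_hajos_numbers:theorem}) is also implicitly used by the paper when it applies Theorem~\ref{completion:th1} to $\EnsCal{E}=\ENS{X}$, but your justification that $\mathcal{G}_n\PAR{\ENS{X}}$ ``coincides with'' the Perrin--Sch\"{u}tzenberger graph overstates the paper's own footnote, which says the two graphs are not equal in general and only agree on the existence of a non-empty path from~$0$ to~$0$; and unfolding ``$\ENS{X}$ is a Haj\'{o}s compatible set'' into ``$X \in H_n$'' deserves one sentence tracing the chain $\DUAL{Y_1} \in H_{t_1}\PAR{\ENS{b}}, \dots, Y \in H_{t_k}\PAR{Y_{k-1}}$ through the recursive definition of $H_n$, since the two definitions place the dualizations differently.
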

    
    \begin{proof}
    	If~$X$ is an~$n$-cbc where~$n$ is a cbc Haj\'{o}s number then it is prefix-suffix
    	according to Theorem~\ref{completion:th1}. 
    	Moreover, according to Example~14.6.1 and Proposition~14.6.3 from~\cite{berstel2010codes}, 
    	any prefix-suffix cbc verifies the triangle property.
    \end{proof}
    
    According to Theorem~\ref{borne_conj_triangle}, 
    the strong triangle conjecture (and thus the Zhang and Shum conjecture) 
    is, in particular, true when~$n < 36$.

	\section{Not commutatively prefix bayonet codes}
	
	In this section, we prove a conjecture about the size 
	of a potential
	counterexample to the triangle conjecture.
	
	A list of codes 
	that do not verify the original triangle 
	conjecture\footnote{the first counterexample was found by Shor, 
	as recalled in the introduction.} is exhibit in~\cite{cordero2019note, cordero2019explorations},
	they are called \DEF{not commutatively prefix} bayonet codes.	
	We recall that if one of those is included in a finite maximal code 
	then the triangle conjecture and the factorization conjecture
	are false. And a necessary condition for a bayonet code  to be 
	included in a finite maximal code
	is to be included in a cbc.
	
	The following conjecture about the divisibility of~$n$ such 
	that an~$n$-cbc contains
	a given bayonet code is proposed in~\cite{cordero2019note}. 
	\begin{conjecture} \label{conj:d_conjecture}
		For any~$n$-cbc~$X$ and~$d$ prime to~$n$, the set
		\begin{equation*}
			\varphi_{d}(X) := 
			\ENS{a^i b a^{\MOD{d j}} \text{ such that } a^i b a^{j} \in X}
		\end{equation*}
		is an~$n$-cbc.
	\end{conjecture}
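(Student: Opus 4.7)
The plan is to first check the cardinality, then produce a bordering factorization for $\varphi_{d}(X)$ from one for $X$, and finally deduce cbc-ness from this. Since $d$ is prime to $n$, the map $j \mapsto \MOD{dj}$ is a bijection of $\entiers$, so the correspondence $a^{i}ba^{j} \mapsto a^{i}ba^{\MOD{dj}}$ sends $X$ injectively onto $\varphi_{d}(X)$; hence $\CARD{\varphi_{d}(X)} = \CARD{X} = n$.

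For the bordering step, I would use that, since $X$ is an $n$-cbc, the singleton $\ENS{X}$ is compatible (by Proposition~\ref{prop:algo_compatibilite}). Proposition~\ref{prop:stable} then yields a factorization $(P,Q)$ of size $n$ bordering $X$: $a^{P}\SOM{X}a^{Q} \Mod \sum_{i,j \in \entiers} a^{i}ba^{j}$. Because $\CARD{Q}$ divides $n$ and $d$ is prime to $n$, Proposition~\ref{prop:bord_trans} gives that $(P,dQ)$ is again a factorization of size $n$. A direct expansion of $a^{P}\SOM{\varphi_{d}(X)}a^{dQ}$, combined with the fact that $(x,y) \mapsto (x,dy)$ is a bijection of $\entiers^{2}$, shows that this product is also congruent modulo $n$ to $\sum_{i,j \in \entiers} a^{i}ba^{j}$; hence $(P,dQ)$ satisfies the formal border equation for $\varphi_{d}(X)$.

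The main obstacle is the third step, namely to deduce cbc-ness of $\varphi_{d}(X)$ from this formal equation, i.e. to show that $\mathcal{G}_{n}(\ENS{\varphi_{d}(X)})$ contains no non-empty path from $0$ to $0$. Given such a loop $0 = v_{0} \to v_{1} \to \cdots \to v_{t} = 0$, each arrow $(v_{\ell-1}, v_{\ell})$ lifts to an arrow $(v_{\ell-1}, \MOD{d^{-1}v_{\ell}})$ in $\mathcal{G}_{n}(\ENS{X})$ because $\varphi_{d}$ scales only the right-hand exponents by $d$; but these lifted arrows do \emph{not} chain into a single path, since the target $\MOD{d^{-1}v_{\ell}}$ of the $\ell$-th arrow generally differs from the source $v_{\ell}$ of the next. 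My plan is to exploit the full stable closure $\ENS{X}^{\circ}$: the bordering factorization $(P,dQ)$ of $\varphi_{d}(X)$, together with compositions in $\ENS{X}^{\circ}$, should allow one to insert bridging arrows so that the lifted sequence becomes a genuine non-empty $0$-to-$0$ walk in $\mathcal{G}_{n}(\ENS{X}^{\circ})$, contradicting compatibility. An alternative, more direct route is to work with the induced system of equations on elements of $X$: a length-$t$ ambiguity in $\varphi_{d}(X)^{*}$ yields sequences $a^{i_{\ell}}ba^{m_{\ell}}, a^{i'_{\ell}}ba^{m'_{\ell}} \in X$ for $\ell = 1,\dots,t$ satisfying $i_{1}=i'_{1}$, $m_{t}=m'_{t}$, and $dm_{\ell}+i_{\ell+1} \Mod dm'_{\ell}+i'_{\ell+1}$ for $1 \leq \ell < t$; the goal is then to combine the rigidity imposed by the border $(P,dQ)$ with cbc-ness of $X$ to force the two sequences to coincide.
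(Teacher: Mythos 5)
Your first two steps are sound and consistent with the paper's framework: the cardinality count is immediate, and the verification that $\PAR{P,\,dQ}$ satisfies the formal border equation for $\varphi_{d}(X)$ is correct. But there is a genuine gap exactly where you place ``the main obstacle'': nothing in your proposal establishes that $\ENS{a^n} \cup \varphi_{d}(X)$ is a code. The relation $a^P \SOM{Y} a^{dQ} \Mod a^{\entiers}ba^{\entiers}$ for a set $Y$ of cardinality $n$ does not by itself imply codeness (the paper's notion of border is defined \emph{for} an $n$-cbc, so being ``formally bordered'' certifies nothing), and both of your proposed routes for the decisive step are left as plans: the ``bridging arrows'' idea is not specified enough to check, and the ``system of equations'' route stops at stating the goal. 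Your own observation that the lifted arrows $\PAR{v_{\ell-1}, \MOD{d^{-1}v_{\ell}}}$ fail to chain is precisely the difficulty, and it is not resolved.

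The paper closes this gap by proving a stronger statement (Theorem~\ref{th:d_conjecture}) whose extra strength is what makes an induction go through: one shows that $\EnsCal{E} \cup \ENS{\varphi_{d}(X)}$ is a \emph{compatible} set whose stable is still bordered by $(P,Q)$, by induction on the number of occurrences of $\varphi_{d}(X)$ in a composition $X_1 \circ_{i_1} \cdots \circ_{i_{j-1}} X_j$. In the induction step the first occurrence of $\varphi_{d}(X)$ is replaced by $X$; the induction hypothesis then guarantees that the ordered pairs $\PAR{\R{i}{n}{a^{P} X_1 \circ_{i_1} \cdots \circ_{i_{\ell-1}} X},\, Q}$ are factorizations bordering $\EnsCal{E}^{\circ}$, and Proposition~\ref{prop:bord_trans} (multiplying one component of such a factorization by $d$ prime to its cardinality) converts them into $\PAR{\R{i}{n}{a^{P} X_1 \circ_{i_1} \cdots \circ_{i_{\ell-1}} \varphi_{d}(X)},\, Q}$, i.e.\ exactly the statement with one more occurrence of $\varphi_{d}(X)$ absorbed; the conjecture then follows by taking $\EnsCal{E} = \ENS{X}$ and noting that $d$ prime to $n = \CARD{P}\CARD{Q}$ is prime to both $\CARD{P}$ and $\CARD{Q}$. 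This is the idea your proposal is missing: rather than analysing a single ambiguity in $\varphi_{d}(X)^{*}$, one controls all compositions at once through the bordering factorizations arising at each cut point of a product, and the closure of factorizations under multiplication by $d$ does the work. To complete your proof you would need to add this induction, or an equivalent strengthening of the statement to all powers of $\SOM{\varphi_{d}(X)}$.
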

	
	We prove a stronger version of this conjecture.
	
	\begin{theorem} \label{th:d_conjecture}
		Let~$\EnsCal{E}$ be a compatible set of~$n$-cbc such that~$\EnsCal{E}^\circ$ is 
		bordered by~$(P,Q)$.
		For any~$X \in \EnsCal{E}$,~$d_1$ prime to~$\CARD{Q}$, and~$d_2$ prime to~$\CARD{P}$, 
		the set
		\begin{equation*}
			\ENS{\varphi_{d_1,d_2}(X)} \cup \EnsCal{E},
		\end{equation*}
		where
		\begin{equation*}
			\varphi_{d_1,d_2}(X) := \ENS{a^{\MOD{d_1 i}} b a^{\MOD{d_2 j}}
			\text{ such that } a^i b a^{j} \in X},
		\end{equation*}
		is a compatible set of~$n$-cbc and its stable is bordered by~$(P,Q)$. 
	\end{theorem}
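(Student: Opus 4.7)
The plan is to prove the two assertions of the theorem — compatibility and bordering of the stable — simultaneously by establishing that $(P, Q)$ borders every element of the would-be stable $\PAR{\ENS{\varphi_{d_1, d_2}(X)} \cup \EnsCal{E}}^\circ$. I would split this into three steps: (a) bordering of the new cbc $\varphi_{d_1, d_2}(X)$, (b) compatibility of the augmented set, and (c) propagation of the border to the full stable.

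For step (a), I would translate the bordering condition via the $\R{k}{n}{\cdot}$-formalism used in the proof of Proposition~\ref{prop:bord_trans}: $(P, Q)$ borders an $n$-cbc $Y$ if and only if, for every $k \in \entiers$, the pair $\PAR{\R{k}{n}{a^P Y}, Q}$ is a factorization of $\Z$. A direct computation yields
\begin{equation*}
	\R{k}{n}{a^P \varphi_{d_1, d_2}(X)} = \MOD{d_2 M_k}, \qquad M_k := \ENS{\MOD{j} : \exists\, p \in P,\; a^i b a^j \in X,\; p + d_1 i \Mod k}.
\end{equation*}
Because $d_1$ is prime to $\CARD{Q}$, Proposition~\ref{prop:bord_trans} gives that $(P, d_1 Q)$ also borders $X$, whence $\PAR{\R{k}{n}{a^P X}, d_1 Q}$ is a factorization for every $k$. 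A reindexing (using Proposition~3 of~\cite{Sands}) transports this factorization into $\PAR{M_k, Q}$, and a second application of the same lemma — now with $d_2$ prime to $\CARD{P} = \CARD{M_k}$ — delivers $\PAR{\MOD{d_2 M_k}, Q}$ as a factorization, which is exactly the desired bordering identity for $\varphi_{d_1, d_2}(X)$.

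For step (b), I would invoke Proposition~\ref{prop:algo_compatibilite}: any non-trivial $0 \to 0$ cycle in $\mathcal{G}_n\PAR{\ENS{\varphi_{d_1, d_2}(X)} \cup \EnsCal{E}}$ would exhibit two parsings of a common word from the augmented set, contradicting the uniqueness guaranteed by the bordering of $(P, Q)$ on each of its elements together with the compatibility of $\EnsCal{E}$. For step (c), every composition in the would-be stable that contains at least one $\varphi_{d_1, d_2}(X)$-factor can be rewritten, using the factorizations $\PAR{M_k, Q}$ of step (a) to match the left-to-right transitions, into a product of elements of $\EnsCal{E}^\circ$ modulo a $\varphi_{d_1, 1}$- or $\varphi_{1, d_2}$-substitution at the boundaries; Theorem~\ref{th:bordante} applied to $\EnsCal{E}^\circ$ borders those elements, and step (a) applied once more absorbs the final substitution.

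The main obstacle is step (a): the substitution $i \mapsto \MOD{d_1 i}$ on the left-exponent of $X$ is not invertible modulo $n$ in general (only modulo $\CARD{Q}$), so one cannot directly equate $\PAR{M_k, Q}$ with $\PAR{\R{k}{n}{a^P X'}, Q}$ for a natural cbc $X'$. The workaround is the double use of Sands' lemma — first moving $d_1$ from the $Q$-side, where Proposition~\ref{prop:bord_trans} initially places it, onto the left-index summation defining $M_k$; and second moving $d_2$ across the $j$-side using coprimality with $\CARD{P}$. Checking that these reindexings yield genuine factorizations (and not multisets with accidental collisions on the $M_k$-side) is the main combinatorial content of the proof.
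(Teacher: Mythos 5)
Your overall strategy -- transport the border across the maps $i\mapsto \MOD{d_1 i}$, $j\mapsto \MOD{d_2 j}$ via Sands' multiplication lemma, packaged here as Proposition~\ref{prop:bord_trans} -- is the same key tool the paper uses, but your execution has a genuine gap precisely at the point you flag as ``the main obstacle''. For the $d_1$-side of step~(a), you start from the fact that $\PAR{P, d_1 Q}$ borders $X$, i.e.\ that $\PAR{\R{k}{n}{a^P X}, d_1 Q}$ is a factorization, and claim a ``reindexing'' turns this into $\PAR{M_k, Q}$. These two pairs are not related by any reindexing: $\R{k}{n}{a^P X}$ collects the right exponents of the words $a^iba^j\in X$ with $p+i\Mod k$, whereas $M_k$ collects those with $p+d_1 i\Mod k$ -- a \emph{different subset of $X$} contributes to each, and since $d_1$ is only assumed prime to $\CARD{Q}$ (not to $n$) the map $i\mapsto d_1 i$ is not even a permutation of $\Z$. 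Sands' lemma lets you multiply one \emph{factor} of a factorization by a coprime constant; it does not let you move a multiplier from the $Q$-side onto the index $i$ inside the definition of $M_k$. The repair is to argue on the other coordinate: $\L{k}{n}{\varphi_{d_1,1}(X)\, a^Q} = d_1 \L{k}{n}{X a^Q}$ exactly, this set has cardinality $\CARD{Q}$, and $\PAR{P, d_1\L{k}{n}{Xa^Q}}$ is a factorization by Sands since $d_1$ is prime to $\CARD{Q}$ -- the ``symmetrical argument'' already implicit in Proposition~\ref{prop:bord_trans}. Your $d_2$-step (from $\PAR{M_k,Q}$ to $\PAR{\MOD{d_2 M_k},Q}$) is fine once $\PAR{M_k,Q}$ is actually established.

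Steps~(b) and~(c) are also not yet proofs. Knowing that $(P,Q)$ borders each element of $\ENS{\varphi_{d_1,d_2}(X)}\cup\EnsCal{E}$ individually does not give compatibility of compositions: unambiguity of a product $X_1\circ_r X_2$ requires that $\PAR{\R{k_1}{n}{a^PX_1},\,\L{k_2}{n}{X_2a^Q}}$ be a factorization, which is the conclusion of Theorem~\ref{th:bordante} and needs stability as a hypothesis -- you cannot invoke it for the augmented set whose compatibility you are trying to prove. The paper closes this circle by an induction on the number of occurrences of the modified cbc in a product: assuming all products with at most $k$ occurrences satisfy the bordering identity, it locates the leftmost occurrence, observes that the intermediate sets $\R{i}{n}{a^{P} X_1 \circ_{i_1} \cdots X_{\ell-1} \circ_{i_{\ell-1}} X}$ are then themselves borders of $\EnsCal{E}^\circ$, and applies Proposition~\ref{prop:bord_trans} to these \emph{derived} borders rather than to $(P,Q)$ itself. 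Your step~(c) gestures at a rewriting of this kind but does not say how several $\varphi_{d_1,d_2}(X)$-factors in one product are handled; without the induction on their number the argument does not close.
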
	
	
	\begin{proof}
	We prove by a recurrence on~$k$ the following property:
	for all~$X_1, \dots, X_j \in \EnsCal{E} \cup \ENS{\varphi_{d}(X)}$ such that
	\begin{equation*}
		\CARD{\ENS{X_i \text{ such that } i \in [1,j] \text{ and } X_i = \varphi_{d}(X)}} \leq k,
	\end{equation*}
	we have
	\begin{equation*} 
			a^{P} \SOM{X_1} \cdots \SOM{X_j} a^{Q}
   			\Modn{n} 
    		\PAR{a^{\ENTIERS{n}}b}^j a^{\ENTIERS{n}}.
   	\end{equation*}
   	
   	It is true for~$k = 0$ because~$\EnsCal{E}$ is a compatible set whose stable 
   	is bordered by~$(P,Q)$. 
   	Suppose now that it is true for~$k$. 
   	Let~$X_1, \dots, X_j \in \EnsCal{E} \cup \ENS{\varphi_{d}(X)}$ be such that
	\begin{equation*}
		\CARD{\ENS{X_i \text{ such that } i \in [1,j] 
		\text{ and } X_i = \varphi_{d}(X)}} = k+1
	\end{equation*}
	and let~$\ell$ be
	\begin{equation*}
		\text{min}\ENS{i \text{ such that } X_i = \varphi_{d}(X)}.
	\end{equation*}
	By recurrence hypothesis, 
	\begin{equation*} 
			a^{P} \SOM{X_1} \cdots \SOM{X_{\ell-1}} \, \SOM{X}
			\, \SOM{X_{\ell+1}} \cdots \SOM{X_{j}} a^{Q}
   			\Modn{n} 
    		\PAR{a^{\ENTIERS{n}}b}^j a^{\ENTIERS{n}}
   	\end{equation*}
   	thus for all~$i, i_1, \dots, i_{\ell-1} \in \entiers$,
   	\begin{equation*} 
   			\PAR{ \R{i}{n}{a^{P} X_1 \circ_{i_1} \cdots  X_{\ell-1} \circ_{i_{\ell -1}} X}
   			, Q}
   	\end{equation*}
   	are borders of~$\EnsCal{E}^\circ$.
   	According to Proposition~\ref{prop:bord_trans}, 
   	for all~$i, i_1, \dots, i_{\ell-1} \in \entiers$ and~$d$ prime to~$\CARD{P}$,
   	\begin{equation*} 
   			\PAR{ d \R{i}{n}{a^{P} X_1 \circ_{i_1} \cdots  X_{\ell-1} \circ_{i_{\ell -1}} X}
   			, Q}
   	\end{equation*}
   	are also borders of~$\EnsCal{E}^\circ$.
   	So
   	\begin{equation*} 
   		\PAR{\R{i}{n}{a^{P} X_1  \circ_{i_1} \cdots  X_{\ell-1} \circ_{i_{\ell -1}} \varphi_{d}(X)},
   		Q}
   	\end{equation*}
   	are borders of~$\EnsCal{E}^\circ$ and thus
	\begin{equation*} 
			a^{P} \SOM{X_1} \cdots \SOM{X_{j}} a^{Q}
   			\Modn{n} 
    		\PAR{a^{\ENTIERS{n}}b}^j a^{\ENTIERS{n}}.
   	\end{equation*}
   	Thus the proposition is true for~$k+1$.
   	This proves that~$\EnsCal{E} \cup \ENS{\varphi_{d}(X)}$ is a compatible set.
   	We obtain the expected result by duality.   	 
	\end{proof}

	Theorem~\ref{th:d_conjecture} does imply Conjecture~\ref{conj:d_conjecture} 
	because according to Proposition~\ref{prop:stable}, 
	for any~$n$-cbc~$X$, there exists an ordered pair~$(P,Q)$ which borders~$\ENS{X}^\circ$ 
	and any number prime to~$n = |P|\times|Q|$ is also prime to~$|P|$ and~$|Q|$.	
	
	Theorem~\ref{th:d_conjecture} allows us to compute some
	lower bounds about potential counterexamples to the 
	triangle conjecture.  
	
	\begin{example}
	According to~\cite{cordero2019note}, one of the four 
	smallest (for cardinality) not commutatively prefix bayonet codes is
	\begin{equation*}
	T := \ENS{b, ba^2, ba^8, ba^{10}, aba^8,
	 aba^{10},
	a^4b, a^4ba^2, a^5b,  a^5ba^3,
	a^5ba^6, a^9b, a^9ba^2}.
	\end{equation*}
	We already know\footnote{see section~3.1 of~\cite{cordero2019note}}, 
	thanks to computer exploration and factorization theory, 
	that if~$T$ is included in an~$n$-cbc then~$n = 4k$, where~$k \geq 8$.	
	
	We note that 
	\begin{equation*}
		\PAR{ ba^{2\times2} } (b) = (b) \PAR{ a^4b }
		\text{ and }
		\PAR{ a^5ba^{3\times3} } (b) = \PAR{ a^5b } \PAR{ a^9b}
	\end{equation*}	
	thus~$\mu_{1,2}\PAR{T}$
	and~$\mu_{1,3}\PAR{T}$ are not codes, where
	\begin{equation*}
		\mu_{d_1,d_2}(T) := \ENS{a^{d_1 i} b a^{d_2 j}
		\text{ such that } a^i b a^{j} \in T}.
	\end{equation*}
	Likewise, we note that~$\mu_{2,1}\PAR{T}$ and~$\mu_{3,1}\PAR{T}$ 
	are not codes.
	Thus according to Theorem~\ref{th:d_conjecture}, if~$T$ is included in an~$n$-cbc~$X$ then 
	any border~$(P,Q)$ of~$\ENS{X}^\circ$
	is such that~$2\times3\,|\,|P|$ and~$2\times3\,|\,|Q|$, 
	thus~$36 \, |\, n$. 
	
	Similar argument can be applied to others known not commutatively 
	prefix bayonet codes. 
	\end{example}
	
    \section*{Conclusion and perspectives}

    We conclude this article by exposing our main perspectives.
    We do not conjecture that the general case of the strong triangle conjecture is true. 
	We believe that techniques developed in order to build non-Haj\'{o}s factorizations 
	and non-\textit{Rédei} factorizations such as in~\cite{sands2007question} could be useful 
	to create counterexamples to the strong triangle conjecture.	
	Since every counterexample of the (Zhang and Shum) triangle conjecture must 
	contain a counterexample to the 
	strong triangle conjecture, we believe that it is an intermediate step in order to find a
	 counterexample to the triangle conjecture (if it exists).
	
	Our second perspective is the converse of Proposition~\ref{prop:conj_facto_bord_krasner},
	we wounder if every finite maximal codes bordered by Krasner factorizations satisfy the
	 factorization conjecture. Thanks to the characterization provided by
	  Theorem~\ref{Theorem_carac_periodic},
	 we are more confident about a positive answer. 
	 If it is the case then results about the triangle conjecture from
	  Theorem~\ref{borne_conj_triangle} 
	 could be extend to the factorization conjecture.

	\bibliographystyle{alphaurl}
	\bibliography{biblio}
	
\end{document}